\newtheorem{thm}{Theorem}[section]
\newtheorem{cor}[thm]{Corollary}
\newtheorem{lem}[thm]{Lemma}
\newtheorem{prop}[thm]{Proposition}
\newtheorem{defn}[thm]{Definition}
\newtheorem{rem}[thm]{\bf{Remark}}
\numberwithin{equation}{section}
\def\pn{\par\noindent}
\begin{document}

\title{Graphs cospectral with a friendship graph or its complement}
\author{Alireza Abdollahi$^*$  Shahrooz Janbaz and Mohammad Reza Oboudi}

\thanks{{\scriptsize
\hskip -0.4 true cm MSC(2010): Primary: 05C15
\newline Keywords: Friendship graphs, cospectral graphs, adjacency eigenvalues.\\
$*$Corresponding author}}
\maketitle

\begin{abstract}
Let $n$ be any positive integer and  let $F_n$ be the friendship (or Dutch windmill) graph with $2n+1$ vertices and $3n$ edges. Here we study graphs with the same adjacency spectrum as the $F_n$. Two graphs are called cospectral if the eigenvalues multiset  of their adjacency matrices are the same. Let $G$ be a graph cospectral with $F_n$. Here we prove that if $G$ has no cycle of length $4$ or $5$, then $G\cong F_n$. Moreover if $G$ is connected and planar then $G\cong F_n$.
All but one of connected components of $G$ are isomorphic to $K_2$.
The complement $\overline{F_n}$ of the friendship graph is determined by its adjacency eigenvalues, that is, if $\overline{F_n}$ is cospectral with a graph $H$, then $H\cong \overline{F_n}$.
\end{abstract}

\vskip 0.2 true cm


\pagestyle{myheadings}
\markboth{\rightline {\scriptsize  Abdollahi, Janbaz and Oboudi}}
         {\leftline{\scriptsize Graphs cospectral with a friendship graph}}

\bigskip
\bigskip

\section{\bf Introduction}
\vskip 0.4 true cm

All graphs in this paper are simple of finite orders, i.e., graphs are undirected with no loops or parallel edges and with finite number of vertices. Let $V(G)$ and $E(G)$ denote the vertex set and edge set of a graph $G$, respectively. Also, $A(G)$ denotes the $(0,1)$-adjacency matrix of graph $G$. The characteristic polynomial of $G$ is det$\left(\lambda I-A(G)\right)$, and we denote it by $P_{G}(\lambda)$. The roots of $P_{G}(\lambda)$ are called the adjacency eigenvalues of $G$ and since $A(G)$ is real and symmetric, the eigenvalues are real numbers. If $G$ has $n$ vertices, then it has $n$ eigenvalues and we denote its eigenvalues in descending order as $\lambda_1\geq \lambda_2\geq \cdots \geq \lambda_n$. Let $\lambda_1, \lambda_2, \ldots, \lambda_s$ be the distinct eigenvalues of $G$ with multiplicity $m_1, m_2, \ldots, m_s$, respectively. The multi-set ${\rm Spec}(G)=\{(\lambda_1)^{m_1}, (\lambda_2)^{m_2}, \ldots, (\lambda_s)^{m_s}\}$ of eigenvalues of $A(G)$ is called the adjacency spectrum of $G$.

For two graphs $G$ and $H$, if ${\rm Spec}(G)={\rm Spec}(H)$, we say $G$ and $H$  are cospectral with respect to adjacency matrix. A graph $G$ is said to be determined by its spectrum or  DS for short, if for a graph $H$ with ${\rm Spec}(G)={\rm Spec}(H)$, one has $G$ is isomorphic to $H$. So far numerous examples of cospectral but non-isomorphic graphs are constructed by  interesting techniques such as Seidel switching, Godsil-McKay switching, Sunada or Schwenk method. For more information, one may see \cite{a1,a2,a12} and the references cited in them. But, only a few graphs with very special structures have been reported to be determined by their spectra (see \cite{a1,a12,a3,a4,a6,b1,b2,b3,b4,b5,b6} and the references cited in them). Also, Wei Wang  and Cheng-Xian Xu have developed a new method in \cite{b7} to show that  many graphs are determined by their spectrum and the spectrum of their complement.

The friendship (or Dutch windmill  or $n$-fan) graph $F_n$ is a graph that can be constructed by coalescence $n$ copies of the cycle graph $C_3$ of length $3$ with a common vertex. By construction, the friendship graph $F_n$ is isomorphic to the windmill graph $Wd\left(3,n\right)$ \cite{a7}. The \textbf{Friendship Theorem} of Paul Erd\"os, Alfred R\' enyi and Vera T. S\' os \cite{a8}, states that  graphs with the property that every two vertices have exactly one neighbour in common are exactly the friendship graphs.
\noindent
The figure \ref{pic1} shows some examples of friendship graphs.
\begin{figure}[htb]
\centering
\begin{tikzpicture}[scale=0.7]
\filldraw [black]
(-1,-0.5) circle (3.5 pt)
(-1,0.5) circle (3.5 pt)
(0,0) circle (3.5 pt)
(1,0.5) circle (3.5 pt)
(1,-0.5) circle (3.5 pt);
\node [label=below:$F_2$] (F_2) at (0,-1.2) {};
\draw[thick] (-1,-0.5) -- (-1,0.5);
\draw[thick] (-1,0.5) -- (0,0);
\draw[thick] (-1,-0.5) -- (0,0);
\draw[thick] (0,0) -- (1,0.5);
\draw[thick] (0,0) -- (1,-0.5);
\draw[thick] (1,0.5) -- (1,-0.5);
\end{tikzpicture} \qquad
\begin{tikzpicture}[scale=0.7]
\filldraw [black]
(-1,-0.5) circle (3.5 pt)
(-1,0.5) circle (3.5 pt)
(0,0) circle (3.5 pt)
(1,0.5) circle (3.5 pt)
(1,-0.5) circle (3.5 pt)
(-0.5,1) circle (3.5 pt)
(0.5,1) circle (3.5 pt);
\node [label=below:$F_3$] (F_3) at (0,-1.2) {};
\draw[thick] (-1,-0.5) -- (-1,0.5);
\draw[thick] (-1,0.5) -- (0,0);
\draw[thick] (-1,-0.5) -- (0,0);
\draw[thick] (0,0) -- (1,0.5);
\draw[thick] (0,0) -- (1,-0.5);
\draw[thick] (1,0.5) -- (1,-0.5);
\draw[thick] (-0.5,1) -- (0.5,1);
\draw[thick] (-0.5,1) -- (0,0);
\draw[thick] (0.5,1) -- (0,0);
\end{tikzpicture}\qquad
\begin{tikzpicture}[scale=0.7]
\filldraw [black]
(-1,-0.5) circle (3.5 pt)
(-1,0.5) circle (3.5 pt)
(0,0) circle (3.5 pt)
(1,0.5) circle (3.5 pt)
(1,-0.5) circle (3.5 pt)
(-0.5,1) circle (3.5 pt)
(0.5,1) circle (3.5 pt)
(-0.5,-1) circle (3.5 pt)
(0.5,-1) circle (3.5 pt);
\node [label=below:$F_4$] (F_4) at (0,-1.2) {};
\draw[thick] (-1,-0.5) -- (-1,0.5);
\draw[thick] (-1,0.5) -- (0,0);
\draw[thick] (-1,-0.5) -- (0,0);
\draw[thick] (0,0) -- (1,0.5);
\draw[thick] (0,0) -- (1,-0.5);
\draw[thick] (1,0.5) -- (1,-0.5);
\draw[thick] (-0.5,1) -- (0.5,1);
\draw[thick] (-0.5,1) -- (0,0);
\draw[thick] (0.5,1) -- (0,0);
\draw[thick] (-0.5,-1) -- (0.5,-1);
\draw[thick] (-0.5,-1) -- (0,0);
\draw[thick] (0.5,-1) -- (0,0);
\end{tikzpicture}\qquad
\begin{tikzpicture}[scale=0.7]
\filldraw [black]
(-1,-0.5) circle (3.5 pt)
(-1,0.5) circle (3.5 pt)
(0,0) circle (3.5 pt)
(-0.5,1) circle (3.5 pt)
(0.5,1) circle (3.5 pt)
(0.8,0.5) circle (1 pt)
(0.9,0) circle (1 pt)
(0.8,-0.5) circle (1 pt)
(-0.5,-1) circle (3.5 pt)
(0.5,-1) circle (3.5 pt);
\node [label=below:$F_n$] (F_n) at (0,-1.2) {};
\draw[thick] (-1,-0.5) -- (-1,0.5);
\draw[thick] (-1,0.5) -- (0,0);
\draw[thick] (-1,-0.5) -- (0,0);
\draw[thick] (-0.5,1) -- (0.5,1);
\draw[thick] (-0.5,1) -- (0,0);
\draw[thick] (0.5,1) -- (0,0);
\draw[thick] (-0.5,-1) -- (0.5,-1);
\draw[thick] (-0.5,-1) -- (0,0);
\draw[thick] (0.5,-1) -- (0,0);
\end{tikzpicture}
\caption{Friendship graphs $F_2$, $F_3$, $F_4$ and $F_n$}\label{pic1}
\end{figure}
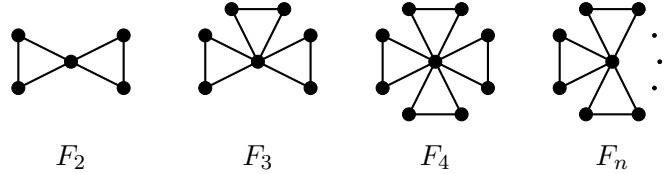

 In \cite{a9} it is  proved that the friendship graphs can be determined by the signless Laplacian spectrum and in \cite{a9,a10} the following conjecture has been proposed:

\textbf{Conjecture 1.} \textit{The friendship graph is $DS$ respect to the adjacency matrix.}

Conjecture 1 has been recently studied in \cite{c1}. It is claimed as \cite[Theorem 3.2]{c1} that the Conjecture 1 is valid. We believe that there is a gap in the proof of \cite[Theorem 3.2]{c1} where the interlacing theorem has been applied for subgraphs of the graph which are not clear if they are induced or not. Therefore, we give our results independently.

The rest of this paper is organized as follows. In Section 2, we obtain some  preliminary results about the cospectral mate of a friendship graph. In this section we prove that if the cospectral mate of $F_n$ is connected and planar then it is isomorphic to $F_n$. In Section 3, it is proved that, if the cospectral mate of $F_n$ is connected and does not have $C_5$ as a subgraph, then it is isomorphic to $F_n$. Also, we prove that, if there are two adjacent vertices with degree 2 in a cospectral mate of $F_n$, then $G$ is isomorphic to $F_n$ and some variations of the latter result is studied. In Section 4, the complement of the cospectral mate is studied and we show that if this complement is disconnected, then the cospectral mate is isomorphic to $F_n$. Also, it is shown that the complement of the friendship graph $F_n$ is DS.


\section{\bf Some Properties of Cospectral Mate of $F_n$}
\vskip 0.4 true cm

We first give  some preliminary facts and theorems which are useful in the sequel. For the proof of these facts one may see \cite{a11}.

\begin{lem}
Let $G$ be a graph. For the adjacency matrix of $G$, the following information can be deduced from the spectrum:
\begin{enumerate}
\item[1.]
The number of vertices
\item[2.]
The number of edges
\item[3.]
The number of closed walks of any length
\item[4.]
Being regular or not and the degree of regularity
\item[5.]
Being bipartite or not.
\end{enumerate}
\end{lem}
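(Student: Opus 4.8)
The plan is to reduce each of the five assertions to a quantity that depends only on the multiset of eigenvalues recorded in $\mathrm{Spec}(G)$. The single identity doing most of the work is
\[
\mathrm{tr}\,A(G)^k=\sum_{i=1}^n\lambda_i^k,
\]
valid for every $k\ge 0$, which says that every power sum of the spectrum is the trace of a power of $A(G)$; the remaining two items require one extra tool each, namely the Rayleigh-quotient bound for the largest eigenvalue and the combinatorial description of bipartiteness through closed walks.

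For items (1)--(3) I would argue directly. The number of vertices is $n=\sum_j m_j$, the number of eigenvalues counted with multiplicity, equivalently $\deg P_G$. For item (3), since $\big(A(G)^k\big)_{ii}$ is the number of closed walks of length $k$ based at the $i$-th vertex, the total number of closed walks of length $k$ is $\mathrm{tr}\,A(G)^k=\sum_i\lambda_i^k$, a power sum of the spectrum. Item (2) is the case $k=2$: as $A(G)$ is a symmetric $(0,1)$-matrix we have $\big(A(G)^2\big)_{ii}=\deg(v_i)$, so $\mathrm{tr}\,A(G)^2=\sum_i\deg(v_i)=2|E(G)|$ and hence $|E(G)|=\tfrac12\sum_i\lambda_i^2$.

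For item (4) I would invoke the variational characterization of $\lambda_1$. Testing the Rayleigh quotient on the all-ones vector $\mathbf 1$ gives $\lambda_1\ge\mathbf 1^{\top}A(G)\mathbf 1/\mathbf 1^{\top}\mathbf 1=2|E(G)|/n$, the average degree, with equality if and only if $\mathbf 1$ is an eigenvector of $A(G)$ for $\lambda_1$, which happens exactly when $G$ is regular; in that case the degree of regularity equals $\lambda_1$. Since both $\lambda_1$ and the average degree $2|E(G)|/n=\frac1n\sum_i\lambda_i^2$ are visible in the spectrum, regularity and, when it holds, the common degree are determined.

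I expect item (5) to be the main obstacle, since it needs a genuinely combinatorial input rather than a formal trace identity. The route I would take is: $G$ is bipartite if and only if it has no cycle of odd length, if and only if it has no closed walk of odd length, if and only if $\mathrm{tr}\,A(G)^k=0$ for every odd $k$. The step deserving care is the equivalence between ``no odd cycle'' and ``no odd closed walk'': any closed walk of odd length must contain an odd cycle, which requires a short parity/decomposition argument. Once this is in place, item (3) shows that the odd power sums $\sum_i\lambda_i^k$ are spectral invariants, so their simultaneous vanishing---equivalently, the symmetry of the multiset $\{\lambda_i\}$ about $0$, since $\sum_i(-\lambda_i)^k=(-1)^k\sum_i\lambda_i^k$ and power sums determine a multiset---can be read off from $\mathrm{Spec}(G)$, and bipartiteness is thereby decided.
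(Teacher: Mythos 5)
Your proposal is correct, but there is nothing in the paper to compare it against: the paper does not prove this lemma at all, it simply defers to the textbook reference (Cvetkovi\'c--Rowlinson--Simi\'c, cited as the source of these ``preliminary facts''). Your arguments are the standard ones that the reference supplies. Items (1)--(3) follow from $\mathrm{tr}\,A(G)^k=\sum_i\lambda_i^k$ exactly as you say, with (2) the case $k=2$ via $(A^2)_{ii}=\deg(v_i)$. For (4), your Rayleigh-quotient argument is sound: $\lambda_1\geq 2|E(G)|/n$ with equality precisely when $\mathbf 1$ lies in the $\lambda_1$-eigenspace, i.e.\ when all row sums of $A(G)$ are equal; the converse direction also needs $\lambda_1=d$ for a $d$-regular graph, which holds since $\lambda_1$ is bounded above by the maximum degree, and both $\lambda_1$ and the average degree $\frac1n\sum_i\lambda_i^2$ are spectral quantities. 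For (5), you correctly identify the only genuinely combinatorial step, namely that a closed walk of odd length contains an odd cycle (induction on the walk length: a repeated vertex splits the walk into two shorter closed walks, one of odd length), after which bipartiteness is equivalent to the vanishing of all odd power sums $\sum_i\lambda_i^k$, a condition visible in the spectrum; your further remark that this is equivalent to symmetry of the spectrum about $0$ is a correct (if unneeded) strengthening, justified by the fact that power sums determine a finite real multiset. In short, your proof is complete and is precisely the argument the paper outsources to its reference.
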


\begin{prop}
Let $F_n$ denote the friendship graph with $2n+1$ vertices, then we have
$$Spec(F_n)=\left\lbrace\left(\frac{1}{2}-\frac{1}{2}\sqrt{1+8n}\right)^1,{\left(-1\right)}^n,{\left(1\right)}^{n-1},{\left(\frac{1}{2}+\frac{1}{2}\sqrt{1+8n}\right)}^1 \right\rbrace.$$
\end{prop}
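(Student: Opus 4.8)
The plan is to exploit the highly symmetric structure of $F_n$ by solving its eigenvalue equations directly, rather than expanding a determinant. The key structural observation is that $F_n$ is the join $K_1 \vee (nK_2)$: there is one central vertex $v_0$ adjacent to every other vertex, together with $n$ pairwise disjoint edges $\{u_i,w_i\}$ for $1\le i\le n$ among the remaining $2n$ vertices, with no edges between distinct pairs. Accordingly, I would describe an arbitrary vector by its value $x_0$ at $v_0$ and its values $a_i,b_i$ at $u_i,w_i$, and write out $A(F_n)x=\lambda x$ as the centre equation $\sum_{i=1}^n (a_i+b_i)=\lambda x_0$ together with the two endpoint equations $x_0+b_i=\lambda a_i$ and $x_0+a_i=\lambda b_i$ for each $i$.

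The analysis then splits cleanly. Subtracting the two endpoint equations for a fixed $i$ gives $(a_i-b_i)(\lambda+1)=0$, so either $\lambda=-1$ or $a_i=b_i$ for every $i$. For $\lambda=-1$, both endpoint equations reduce to $a_i+b_i=-x_0$, and feeding this into the centre equation yields $(n-1)x_0=0$; for $n\ge 2$ this forces $x_0=0$ and $b_i=-a_i$, leaving the $n$ free parameters $a_1,\dots,a_n$, so the $(-1)$-eigenspace has dimension $n$. If instead $\lambda\ne -1$, setting $a_i=b_i=:c_i$ turns the endpoint equations into $x_0=(\lambda-1)c_i$. When $\lambda=1$ this gives $x_0=0$ and, via the centre equation, $\sum_i c_i=0$, an eigenspace of dimension $n-1$. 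When $\lambda\notin\{-1,1\}$ all the $c_i$ coincide to a common value $c$, and substituting $x_0=(\lambda-1)c$ into $2\sum_i c_i=\lambda x_0$ produces $\lambda^2-\lambda-2n=0$, whose two simple roots are exactly $\tfrac12\pm\tfrac12\sqrt{1+8n}$.

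I would finish with a dimension count: $n+(n-1)+1+1=2n+1$ equals the number of vertices, so these eigenvalues with the stated multiplicities exhaust the spectrum, which is precisely the multiset in the proposition. (Alternatively, one can invoke the standard join-spectrum formula for a join of regular graphs applied to $K_1\vee(nK_2)$, obtaining the eigenvalues $(-1)^n$ and $(1)^{n-1}$ inherited from $nK_2$ together with the two roots of $\lambda^2-\lambda-2n=0$ from the associated quotient matrix $\left(\begin{smallmatrix}0 & 2n\\ 1 & 1\end{smallmatrix}\right)$.)

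The main point requiring care is verifying that the count is genuinely exhaustive and that the eigenspaces are independent, together with the degenerate small-$n$ behaviour: for $n=1$ one has $F_1=C_3$, where the root $\tfrac12-\tfrac12\sqrt{1+8n}$ coincides with $-1$, so the two sources of the eigenvalue $-1$ merge and the $(n-1)$ copies of $1$ vanish. I would therefore state the generic computation for $n\ge 2$ and note that the $n=1$ case is consistent because the formula correctly records the coincidence of these eigenvalues.
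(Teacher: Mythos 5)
Your proof is correct, but it takes a genuinely different route from the paper. The paper's proof is a two-line citation: it recognizes $F_n$ as the cone $K_1\nabla nK_2$ and invokes Theorem 2.1.8 of the Cvetkovi\'c--Rowlinson--Simi\'c book to write down the characteristic polynomial $P_{F_n}(x)=(x+1)(x^2-1)^{n-1}(x^2-x-2n)$ directly, from which the spectrum is read off. You instead solve the eigenvalue equations $A(F_n)x=\lambda x$ by hand, exploiting the coordinates $(x_0;a_i,b_i)$ adapted to the join structure: the antisymmetric solutions $b_i=-a_i$ give the eigenvalue $-1$ with multiplicity $n$, the symmetric solutions with $\sum_i c_i=0$ give $1$ with multiplicity $n-1$, and the constant symmetric solutions give the quadratic $\lambda^2-\lambda-2n=0$, after which the dimension count $n+(n-1)+1+1=2n+1$ shows the list is exhaustive (this count is legitimate since eigenspaces of a symmetric matrix for distinct eigenvalues are orthogonal, and for $n\geq 2$ the four values are indeed pairwise distinct). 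What your approach buys is self-containment --- explicit eigenvectors, no reliance on the cone/join formula, and an honest treatment of the $n=1$ degeneracy where $\frac{1}{2}-\frac{1}{2}\sqrt{1+8n}=-1$, a case the paper's statement silently absorbs. What the paper's approach buys is brevity and a closed-form characteristic polynomial, which it reuses implicitly elsewhere (e.g.\ the value $P_G(0)=\pm 2n$ in later arguments). Your parenthetical remark about the quotient matrix $\bigl(\begin{smallmatrix}0 & 2n\\ 1 & 1\end{smallmatrix}\bigr)$ is essentially the paper's argument, so you have in effect supplied both proofs.
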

\begin{proof}
The friendship graph $F_n$ with $2n+1$ vertices is the cone of the disjoint union of $n$ complete graphs $K_2$: $ K_1\nabla nK_2$. It follows from Theorem 2.1.8 of \cite{a11} that the characteristic polynomial of $F_n$ is:
$$P_{F_n}(x)=\left(x+1\right){\left(x^2-1\right)}^{n-1}\left(x^2-x-2n\right).$$
Therefore the spectrum of $F_n$ is as claimed in the theorem.
\end{proof}

Let $H$ be any graph. A graph $G$ is called $H$-free if it does not have  an induced subgraph isomorphic to $H$. In the following, we examine the structure of $G$ as a cospectral graph of $F_n$.

\begin{prop}\label{Prop2,3}
Let the graph $G$ be cospectral with friendship graph $F_n$. Then we have:
\begin{enumerate}
\item[1.]
If $H$ is a graph with ${\lambda }_2\left(H\right)>1$, then $G$ is $H$-free.
\item[2.]
If $H$ is a graph having at least two negative eigenvalues less than $-1$, then $G$ is $H$-free.
\end{enumerate}
\end{prop}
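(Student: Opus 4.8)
The plan is to derive both statements directly from the Cauchy interlacing theorem, taking care that the subgraphs involved are \emph{induced} (equivalently, that their adjacency matrices are principal submatrices of $A(G)$), which is exactly what the definition of $H$-free requires. Since $\mathrm{Spec}(G)=\mathrm{Spec}(F_n)$, the previous proposition lists the eigenvalues of $G$ in non-increasing order (for $n\ge 2$) as
\[
\lambda_1=\tfrac12+\tfrac12\sqrt{1+8n},\quad \lambda_2=\cdots=\lambda_n=1,\quad \lambda_{n+1}=\cdots=\lambda_{2n}=-1,\quad \lambda_{2n+1}=\tfrac12-\tfrac12\sqrt{1+8n}.
\]
From this I would record the two facts that drive the argument: the second largest eigenvalue satisfies $\lambda_2(G)\le 1$ (with equality when $n\ge 2$; for $n=1$ one has $F_1=K_3$ and $\lambda_2(G)=-1\le 1$), and the second smallest eigenvalue is $\lambda_{2n}(G)=-1$, so that $G$ has at most one eigenvalue strictly below $-1$. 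The only tool needed is then the interlacing theorem (see \cite{a11}): if $H$ is an induced subgraph of $G$ on $m$ vertices with eigenvalues $\nu_1\ge\cdots\ge\nu_m$, then $\lambda_i(G)\ge\nu_i\ge\lambda_{i+(2n+1)-m}(G)$ for every $i$.

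For part~1, I would argue by contradiction: suppose $G$ had an induced copy of a graph $H$ with $\lambda_2(H)=\nu_2>1$. Taking $i=2$ in the upper interlacing bound gives $\nu_2\le\lambda_2(G)\le 1$, contradicting $\nu_2>1$; hence $G$ is $H$-free. For part~2, suppose $G$ had an induced copy of $H$ on $m$ vertices. Applying the lower interlacing bound with $i=m-1$ yields $\nu_{m-1}\ge\lambda_{(m-1)+(2n+1)-m}(G)=\lambda_{2n}(G)=-1$, so the second smallest eigenvalue of $H$ is at least $-1$. Consequently $H$ can have at most one eigenvalue smaller than $-1$, which contradicts the assumption that $H$ has at least two such eigenvalues; thus $G$ is $H$-free.

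I do not expect a genuine obstacle in the calculation itself; the whole point—and precisely where the proof in \cite{c1} is claimed to be deficient—is the insistence that $H$ be an induced subgraph. Cauchy interlacing requires $A(H)$ to be a principal submatrix of $A(G)$, i.e.\ it corresponds to deleting vertices rather than edges, and this is exactly what $H$-freeness encodes, so the two notions align and no gap arises. The remaining care is only bookkeeping with the eigenvalue multiset: one must confirm that $\lambda_2(G)\le 1$ and $\lambda_{2n}(G)=-1$ hold for every $n\ge 1$ (the block of $1$'s being empty when $n=1$), which guarantees that both the upper bound used in part~1 and the lower bound used in part~2 are available in all cases.
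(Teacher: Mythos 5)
Your proposal is correct and follows essentially the same route as the paper: both arguments apply the Cauchy interlacing theorem to an induced copy of $H$, using the facts that $\lambda_2(G)\le 1$ and that $G$ has at most one eigenvalue below $-1$. Your version is simply a more careful write-up, making the index bookkeeping explicit and handling the degenerate case $n=1$ (where $F_1=K_3$), which the paper glosses over by asserting $\lambda_2(F_n)=1$.
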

\begin{proof}
We know that $\ {\lambda }_2\left(F_n\right)=1$ and $F_n$ has only one eigenvalue less than $-1$. Therefore by using interlacing theorem for induced subgraph $H$ that satisfies the conditions of above proposition, $G$ must be $H$-free.
\end{proof}

\begin{thm}\label{Theorem1}
Let graph $G$ be cospectral with friendship graph $F_n$. Then $G$ is either connected or it is the disjoint union of some $K_2$ and a connected component.
\end{thm}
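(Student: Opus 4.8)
The plan is to exploit the fact that the adjacency spectrum of a disconnected graph is, as a multiset, the disjoint union of the spectra of its connected components, together with the explicit description of ${\rm Spec}(F_n)$ obtained above. Write $\alpha=\frac12+\frac12\sqrt{1+8n}$ for the largest eigenvalue of $F_n$ and $\beta=\frac12-\frac12\sqrt{1+8n}$ for the smallest, so that ${\rm Spec}(G)={\rm Spec}(F_n)=\{\alpha^1,(-1)^n,1^{\,n-1},\beta^1\}$ as a multiset. The crucial feature for the argument is that the Perron value $\alpha$ occurs with multiplicity exactly $1$ and satisfies $\alpha\ge 2$ for every $n\ge 1$, while the only remaining nonnegative eigenvalue is $1$.

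Assume $G$ is disconnected and let $G_1,\dots,G_k$ with $k\ge 2$ be its connected components. First I would locate the Perron eigenvalue: since $\lambda_1(G)=\alpha=\max_i\lambda_1(G_i)$ and $\alpha$ has multiplicity $1$ in ${\rm Spec}(G)$, exactly one component, say $G_1$, satisfies $\lambda_1(G_1)=\alpha$; consequently no other component has $\alpha$ as an eigenvalue, so for each $i\ge 2$ the value $\lambda_1(G_i)$ lies among the remaining eigenvalues $\{(-1)^n,1^{\,n-1},\beta\}$.

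Next I would identify these remaining components. For any nonempty graph one has $\lambda_1\ge 0$, with $\lambda_1=0$ precisely when the graph is edgeless; since $0\notin{\rm Spec}(F_n)$, no component can be an isolated vertex $K_1$, so every $G_i$ has an edge and therefore $\lambda_1(G_i)\ge 1$ (testing the Rayleigh quotient on an edge). As $-1$ and $\beta$ are negative, the only admissible value is $\lambda_1(G_i)=1$ for all $i\ge 2$. It then remains to show that a connected graph with largest eigenvalue equal to $1$ is $K_2$: for a component with $m_i$ vertices and $e_i$ edges the bound $\lambda_1(G_i)\ge 2e_i/m_i$ combined with the connectivity inequality $e_i\ge m_i-1$ gives $m_i-1\le m_i/2$, hence $m_i\le 2$ and $G_i\cong K_2$. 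Putting everything together, $G_1$ is the unique non-trivial component (it carries $\alpha$, and also $\beta$ since $\beta\notin\{-1,1\}={\rm Spec}(K_2)$) and every other component is a copy of $K_2$, which is exactly the asserted dichotomy.

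The step I expect to require the most care is the final characterization that $\lambda_1=1$ forces $K_2$, together with the explicit exclusion of $K_1$ components; both are dispatched by the elementary eigenvalue estimates above rather than by any deeper structural classification, so the proof stays short. The only other point to keep honest is the bookkeeping of multiplicities when splitting ${\rm Spec}(G)$ across the components, ensuring that the copies of $K_2$ consume exactly the available $\pm1$ eigenvalues and that $\alpha$ and $\beta$ are both assigned to $G_1$.
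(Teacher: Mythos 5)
Your proof is correct, and it takes a genuinely different route from the paper's. The paper argues via interlacing and forbidden induced subgraphs: since $\lambda_2(K_3\cup K_3)>1$ and $\lambda_2(K_3\cup P_3)>1$ while $\lambda_2(F_n)=1$, the graph $G$ contains neither as an induced subgraph (Proposition \ref{Prop2,3}); as some component contains a triangle, every other component is $K_3$-free and has no induced $P_3$, hence is complete on at most two vertices, and the absence of the eigenvalue $0$ rules out $K_1$. You instead decompose the spectrum over the components: the Perron value $\alpha$ has multiplicity one, so it is carried by a single component $G_1$; every other component has largest eigenvalue among $\{-1,1,\beta\}$, and since no component can be edgeless (again because $0\notin{\rm Spec}(G)$) that value must be $1$; the average-degree bound $\lambda_1(G_i)\ge 2e_i/m_i$ together with connectivity ($e_i\ge m_i-1$) then forces $m_i\le 2$, i.e.\ $G_i\cong K_2$. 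Your route is more elementary and self-contained: it requires no computation of second eigenvalues of auxiliary graphs, and it does not rely on $G$ containing a triangle --- a fact the paper's proof uses tacitly (it follows from the closed-walk count, but is never stated). What the paper's approach buys is consistency with the forbidden-subgraph machinery it develops in Proposition \ref{Prop2,3} and reuses heavily in Sections 3 and 4, so its proof of this theorem is one line once that proposition is in place. Both arguments share the same two inputs from the spectrum --- a multiplicity-one extremal eigenvalue condition ($\lambda_2(G)=1$ for the paper, simplicity of $\alpha$ for you) and $0\notin{\rm Spec}(G)$ to exclude isolated vertices --- and your final bookkeeping, assigning $\alpha$ and $\beta$ to $G_1$ and a pair $\{1,-1\}$ to each $K_2$, is sound.
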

\begin{proof}
Suppose $G$ is cospectral with friendship graph $F_n$ and $G=G_1\cup G_2\cup \ldots \cup G_m$. It is easy to see that $\lambda_2(K_3\cup P_3)$ and $\lambda_2(K_3\cup K_3)$ is greater than $1$. Therefore, none of the components of $G$, excluding say $G_1$, does not contain any $K_3$ and $P_3$ as an induced subgraph. So, if $G$ is not connected, the components of $G$, except $G_1$, must be isomorphic to $K_2$, since $G$ does not have any isolated vertices.
\end{proof}

\begin{defn}\cite{a14}
A graph is triangulated if it has no chordless induced cycle with four or more vertices. It follows that the complement of a
 triangulated graph can not contain a chordless cycle with five or more vertices.
\end{defn}

\begin{prop}\label{tria}
Let graph $G$ be connected, planar and cospectral with friendship graph $F_n$, then $G$ is triangulated.
\end{prop}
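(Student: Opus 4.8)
The plan is to prove the contrapositive formulation directly: show that $G$ contains no chordless induced cycle $C_k$ with $k\ge 4$, which is exactly the definition of being triangulated. The first step is to record the spectrum of a cycle, $\mathrm{Spec}(C_k)=\{2\cos(2\pi j/k):j=0,1,\ldots,k-1\}$. From this, $\lambda_2(C_k)=2\cos(2\pi/k)$, which is strictly greater than $1$ exactly when $k\ge 7$; and for $k=5$ the value $2\cos(4\pi/5)\approx-1.618$ occurs with multiplicity two, so $C_5$ has two eigenvalues below $-1$. Hence part (1) of Proposition~\ref{Prop2,3} makes $G$ free of induced $C_k$ for every $k\ge 7$, and part (2) makes $G$ free of induced $C_5$. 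This settles every chordless cycle length except $4$ and $6$.

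The delicate cases are therefore $C_4$ and $C_6$, and these are genuinely not excluded by the spectral criterion: $\mathrm{Spec}(C_4)=\{2,0,0,-2\}$ and $\mathrm{Spec}(C_6)=\{2,1,1,-1,-1,-2\}$, so each has $\lambda_2\le 1$ and only the single eigenvalue $-2$ below $-1$, and Proposition~\ref{Prop2,3} says nothing about them. This is precisely where the planarity hypothesis must be used in an essential way, since the forbidden-subgraph test alone cannot rule out these two lengths. I would thus assume, for contradiction, that $G$ has a chordless induced cycle $C$ of length $4$ or $6$ and study how $C$ sits inside the connected planar graph $G$. The available global data are strong: by Lemma~2.1 and Proposition~2.2, $G$ has $2n+1$ vertices and $3n$ edges, from $\mathrm{tr}(A^3)=6n$ it has exactly $n$ triangles, and $\mathrm{tr}(A^4)$ being fixed gives the degree identity $\sum_i d_i^2=4n^2+8n-4q$, where $q$ is the number of $4$-cycles of $G$. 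Because $G$ is connected and contains triangles, one may take a shortest path joining $C$ to a triangle of $G$ and examine the induced subgraph spanned by $C$ together with that attachment. The aim is to argue that planarity --- via Euler's formula in the form $e\le 3v-6$ applied to subgraphs, or via the absence of a $K_{3,3}$- or $K_5$-subdivision --- so restricts the possible adjacencies between $C$ and the neighbouring triangle that every admissible local configuration contains an induced subgraph $H$ with $\lambda_2(H)>1$, contradicting Proposition~\ref{Prop2,3}.

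The main obstacle is exactly this final step, the elimination of induced $C_4$ and $C_6$. The purely spectral criterion of Proposition~\ref{Prop2,3} is provably insufficient here, so the proof cannot avoid invoking the planar embedding itself to control how a short chordless cycle can interact, inside a connected graph, with the triangles that cospectrality forces $G$ to possess. Converting each planar-admissible attachment of $C$ to the triangle structure into a concrete forbidden spectral pattern (an induced subgraph with a second eigenvalue exceeding $1$) is the technical heart of the argument, and it is the part I expect to require the most care.
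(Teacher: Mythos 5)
Your spectral bookkeeping is correct: $\lambda_2(C_k)=2\cos(2\pi/k)>1$ precisely for $k\ge 7$, and $C_5$ has two eigenvalues below $-1$, so Proposition \ref{Prop2,3} eliminates every induced cycle length except $4$ and $6$. But at that point your proof stops being a proof. For $C_4$ and $C_6$ you offer only a programme: connect the putative chordless cycle to a triangle by a shortest path and show that planarity forces every resulting configuration to contain an induced subgraph with $\lambda_2>1$. None of this case analysis is carried out, and you yourself flag it as the technical heart of the argument; since these are exactly the two cases your method cannot otherwise touch, the proposition is left unproved. Worse, the plan as stated faces real obstacles: the set of configurations is not finite (the connecting path can a priori be arbitrarily long, the cycle and triangle may share vertices, and further vertices of $G$ may attach to the cycle), and planarity by itself does not forbid a chordless $C_4$ in a triangle-rich plane graph --- in the wheel $K_1\nabla C_4$ every bounded face is a triangle yet the $4$-cycle is still induced. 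That configuration does happen to be spectrally forbidden (it has two eigenvalues below $-1$), but your proposal never identifies which induced subgraphs do the killing, and "every admissible configuration contains a forbidden pattern" is precisely the statement to be proved, restated.

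The paper's own proof is entirely different and involves no cycle-by-cycle or configuration analysis. Cospectrality fixes $|V(G)|=2n+1$, $|E(G)|=3n$ and the number of triangles ($=n$, from the closed walks of length $3$); for a connected plane graph Euler's formula then gives $f(G)=2-|V(G)|+|E(G)|=n+1$ faces, and the paper argues that the $n$ bounded faces must each be an induced triangle, whence $G$ has no chordless cycle on four or more vertices. Whatever one thinks of the brevity of that last step, planarity enters only once, through Euler's formula, and the triangle count supplied by the spectrum does the rest; your route, even if it could be completed, would additionally need tools you never invoke (for instance the induced-$P_6$-freeness of $G$, to bound the distance from the cycle to a triangle and make the case list finite) followed by a substantial spectral check of each case.
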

\begin{proof}
The graph $G$ is planar and connected with $n$ triangles, $2n+1$ vertices and $3n$ edges.
Also, the number of faces is an invariant parameter between two cospectral connected planar graphs,
since it only depends on the number of vertices and edges. Let $\textit{f}(G)$ denotes the number of faces of graph $G$.
By Euler formula for connected planar graphs, $\textit{f}(G)=2-|V(G)|+|E(G)|$, the number of faces of $G$ is $n+1$ and each
inner face of $G$ must be an induced triangle. Therefore $G$ has no chordless induced cycle with four or more vertices.
\end{proof}

In the following, we express an interesting theorem proved by Vladimir Nikiforov in \cite{V.niki}, and we use it to prove some results.

\begin{thm}\label{Niki}
\cite{V.niki}
Let $G$ be a graph of order $n$ with $\lambda_1(G)=\lambda$. If $G$ has no 4-cycles, then
$$\lambda^2-\lambda \geq n-1,$$
and equality holds if and only if every two vertices of $G$ have exactly one common neighbour.
\end{thm}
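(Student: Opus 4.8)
The plan is to run the spectral argument for $C_4$-free graphs through a Perron eigenvector and then read off the extremal case. Let $\mathbf{x}=(x_1,\dots,x_n)^{T}$ be an eigenvector for $\lambda=\lambda_1(G)$ that is componentwise nonnegative (this exists by Perron--Frobenius since $A(G)\ge 0$), and write $s=\sum_i x_i$, with $d_v$ the degree of vertex $v$ and $c(v,u)=(A^2)_{vu}$ the number of common neighbours of $v$ and $u$. The first step is to localise the eigenvalue equation at the level of $A^2$: for each vertex $v$,
$$\lambda^{2}x_v=(A^{2}\mathbf{x})_v=d_v x_v+\sum_{u\neq v}c(v,u)\,x_u .$$
This is exactly where the hypothesis enters. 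Since $G$ has no $4$-cycle, two distinct vertices have at most one common neighbour, so $c(v,u)\le 1$ for $u\neq v$; combined with $x_u\ge 0$ this gives the per-vertex inequality
$$\lambda^{2}x_v\le d_v x_v+\sum_{u\neq v}x_u=(d_v-1)x_v+s .$$

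The second step is to sum these inequalities over all $v$, which is the device that absorbs the unwanted degree term. Using the identity $\sum_v d_v x_v=\sum_{v,w}A_{vw}x_v=\sum_w(A\mathbf{x})_w=\lambda s$, the summation yields
$$\lambda^{2}s\le\Big(\sum_v d_v x_v\Big)-s+ns=(\lambda-1+n)\,s .$$
Because $\mathbf{x}\neq 0$ and $\mathbf{x}\ge 0$ we have $s>0$, so dividing by $s$ isolates the bound
$$\lambda^{2}-\lambda\le n-1,$$
with the friendship graphs emerging as the extremal configuration. I expect the only delicate points in the inequality itself to be the bookkeeping identity $\sum_v d_v x_v=\lambda s$ and the sign bookkeeping that keeps each discarded quantity $\big(1-c(v,u)\big)x_u$ nonnegative.

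The final step, and the one actually used in the sequel, is the equality analysis. Equality forces, for every $v$, that $\sum_{u\neq v}\big(1-c(v,u)\big)x_u=0$; as each summand is nonnegative this means $c(v,u)=1$ for every $u\neq v$ with $x_u>0$. When $G$ is connected the Perron vector is strictly positive, so $c(v,u)=1$ for all pairs, i.e.\ every two vertices have exactly one common neighbour; the Friendship Theorem \cite{a8} then identifies $G$ as some $F_m$, while conversely the spectrum of $F_m$ computed earlier gives $\lambda^2-\lambda=2m=(2m+1)-1$, so each friendship graph attains equality. The main obstacle I anticipate is making the equality discussion airtight when $G$ need not be connected: one must note that a component on which $\mathbf{x}$ vanishes contributes nothing to $\lambda$, so it suffices to analyse the component carrying the Perron weight and then invoke the classification of the extremal graphs.
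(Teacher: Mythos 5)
Your argument is sound, and it is worth saying up front that the paper offers no proof of this statement at all---it is quoted verbatim from Nikiforov \cite{V.niki}---so the only meaningful comparison is with Nikiforov's original argument. His proof localizes the same identity $\lambda^{2}x_v=d_vx_v+\sum_{u\neq v}c(v,u)x_u$ at a single vertex $w$ of maximal eigenvector entry, obtaining $(\lambda^{2}-\lambda)x_w\leq d_wx_w+\sum_{u\neq w,\,u\not\sim w}x_u\leq(n-1)x_w$; you instead sum the per-vertex inequality over all $v$ and use $\sum_v d_vx_v=\lambda s$. Both are one-step arguments, but your averaged version pays a dividend in the equality analysis, since equality then forces the per-vertex equality at \emph{every} vertex rather than only at the maximal one. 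One discrepancy you should flag explicitly: the theorem as printed in the paper has the inequality reversed. The correct direction---the one Nikiforov proves, the one you prove, and the one the empty graph shows is the only tenable one---is $\lambda^{2}-\lambda\leq n-1$. Since Corollary \ref{trian} invokes only the equality case ($\lambda_1^2-\lambda_1=2n$ with $2n+1$ vertices), nothing downstream is affected, but your write-up silently proves the opposite of the stated inequality without comment.

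The one genuine soft spot is exactly the point you flagged: the disconnected equality case. Your proposed patch ("analyse the component carrying the Perron weight and then invoke the classification") is not quite right as stated, because establishing the friendship property on one component says nothing about pairs of vertices involving the other components. But your own framework closes the gap in one line, and more cleanly than you anticipated: equality in the summed bound forces $\lambda^{2}x_v=(d_v-1)x_v+s$ for every $v$, and if some $x_v=0$ this identity reads $0=s$, contradicting $s>0$. Hence equality makes $\mathbf{x}$ strictly positive with no connectivity hypothesis whatsoever, and $c(v,u)=1$ for all pairs follows directly (the count is \emph{exactly} one because $C_4$-freeness already caps $c(v,u)$ at $1$); connectedness of $G$ is then a consequence, not an assumption. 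Alternatively, apply the already-proved inequality to the component attaining $\lambda$, of order $n'\leq n-1$, to get $\lambda^{2}-\lambda\leq n'-1<n-1$, so equality simply never occurs for disconnected $G$. With either repair the equality analysis is airtight, and your converse direction---Friendship Theorem plus the spectrum of $F_m$ giving $\lambda^2-\lambda=2m=n-1$---is correct as written.
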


\begin{cor}\label{trian}
Let $G$ be connected, planar and cospectral with friendship graph $F_n$. Then $G$ is isomorphic to $F_n$.
\end{cor}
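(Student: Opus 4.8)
The plan is to reduce the statement to Nikiforov's theorem (Theorem~\ref{Niki}) together with the Friendship Theorem of Erd\H{o}s, R\'enyi and S\'os. Since $G$ is cospectral with $F_n$, it has $|V(G)|=2n+1$ vertices, and, being connected, its largest eigenvalue is the Perron root $\lambda_1(G)=\frac12+\frac12\sqrt{1+8n}$ from Proposition~2.2. A direct computation gives $\lambda_1(G)^2-\lambda_1(G)=2n=|V(G)|-1$. Hence, \emph{provided $G$ contains no $4$-cycle}, Theorem~\ref{Niki} applies and the equality $\lambda_1^2-\lambda_1=|V(G)|-1$ forces every two vertices of $G$ to have exactly one common neighbour; the Friendship Theorem then identifies $G$ as a friendship graph, and comparing vertex counts yields $G\cong F_n$. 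So the whole argument rests on showing that $G$ is $C_4$-free.

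To establish this, first note that by Proposition~\ref{tria} the graph $G$ is triangulated, hence has no \emph{induced} $4$-cycle. A non-induced $4$-cycle must carry a chord, and a chorded $4$-cycle is precisely two triangles sharing an edge, i.e.\ a diamond $K_4-e$. Thus it remains to rule out diamonds (and, a fortiori, $K_4$), which is exactly the gap left open by Proposition~\ref{Prop2,3}, since neither the diamond nor $K_4$ has $\lambda_2>1$ or two eigenvalues below $-1$.

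For this I would exploit the planar embedding. By Euler's formula $G$ has $n+1$ faces, while from the spectrum $G$ has exactly $n$ triangles; the argument of Proposition~\ref{tria} shows the $n$ bounded faces are triangles, so these bounded faces are precisely the $n$ triangles of $G$, each with empty interior. Consequently $G$ is $K_4$-free: in any plane drawing of a $K_4$ one vertex lies inside a triangle of the other three, which would give a triangle of $G$ containing a vertex in its interior, contradicting that every triangle is a face. For the diamond, a double count of incidences between edges and the $n$ triangular faces (each edge meets $0$, $1$ or $2$ of them, the total being $3n=|E(G)|$) shows that the number of edges lying in two triangles equals the number of edges lying in none; since an edge in no triangle borders the unbounded face on both sides, it is a bridge. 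Hence a diamond in $G$ forces a bridge in $G$.

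The main obstacle is therefore the combinatorial claim that a connected planar graph cospectral with $F_n$ has no bridge (equivalently, is $2$-edge-connected). I would derive this from the connectivity of $G$ together with the rigid face/triangle/edge data above: a bridge lies in no triangle and is doubly incident to the outer face, and I expect this to clash either with the global counts $|V(G)|=2n+1$, $|E(G)|=3n$, exactly $n$ triangles, or with a forbidden induced configuration coming from Proposition~\ref{Prop2,3}. Once bridgelessness, hence diamond-freeness, hence $C_4$-freeness, is secured, the Nikiforov--Friendship argument of the first paragraph completes the proof that $G\cong F_n$.
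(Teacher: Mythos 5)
Your proposal follows the same skeleton as the paper's proof: establish that $G$ has no $4$-cycle, compute $\lambda_1^2(G)-\lambda_1(G)=2n=|V(G)|-1$, and invoke the equality case of Theorem \ref{Niki} (the paper leaves the final appeal to the Friendship Theorem implicit). The genuine difference is that you noticed the step the paper elides: Proposition \ref{tria} only says $G$ is triangulated, i.e.\ has no \emph{induced} $C_4$, whereas Theorem \ref{Niki} requires that $G$ have no $C_4$ \emph{subgraph}; so the diamond $K_4-e$ and $K_4$ must be excluded separately, and Proposition \ref{Prop2,3} is powerless for them, since neither has $\lambda_2>1$ nor two eigenvalues below $-1$. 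Your conditional reasoning toward this end is correct, granting the claim --- taken from the \emph{proof}, not the statement, of Proposition \ref{tria} --- that the $n$ bounded faces are exactly the $n$ triangles of $G$: a plane $K_4$ would put a vertex inside a facial triangle, and counting incidences between edges and the $n$ triangular faces shows that an edge lying on two triangular faces forces an edge lying on none, which is then a bridge.

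The gap is that you never prove $G$ is bridgeless; you only state that you ``expect'' a contradiction, and the first route you suggest (a clash with the global counts) provably cannot work. Indeed, a triangle joined by a single bridge to a diamond is connected and planar, has $7=2\cdot 3+1$ vertices, $9=3\cdot 3$ edges, exactly $3$ triangles, minimum degree $2$, and every bounded face a triangle, yet it contains a bridge and a $C_4$: the face/edge/triangle data alone are fully consistent with bridges, so genuinely spectral input is indispensable at this point. The gap is in fact closable with the paper's own tools, inside your framework: by Lemma \ref{2} the minimum degree of $G$ is $2$, so each side of a putative bridge contains a cycle, hence a bounded face of $G$, hence a triangle lying entirely on that side (a facial triangle cannot straddle the bridge, since a bridge lies on no cycle); these two triangles are vertex-disjoint and joined by at most the bridge, so $G$ would contain either an induced $2K_3$ (with $\lambda_2=2$) or an induced copy of the graph $A_2$ of Figure \ref{pic2}, two triangles joined by one edge (with $\lambda_2=\sqrt{3}$), and both are excluded by Proposition \ref{Prop2,3}. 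As submitted, however, your argument stops exactly at this crux, so it is not a complete proof.
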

\begin{proof}
By Proposition \ref{tria}, the graph $G$ is $C_4$-free and $\lambda_1^2(G)-\lambda_1(G)=2n$. Therefore by Theorem\ref{Niki}, the graph $G$ must be isomorphic to $F_n$.
\end{proof}

Suppose $\chi(G)$ and $\omega(G)$ are chromatic number and clique number of graph $G$, respectively. A graph $G$ is \textbf{\textit{perfect}} if $\chi(H)=\omega(H)$ for every induced subgraph $H$ of $G$. It is proved that a graph $G$ is perfect if and only if $G$ is \textit{Berge}, that is, it contains no odd hole or antihole, where odd hole and antihole are odd cycle, $C_m$ for $m\geq5$, and its complement, respectively.  Also in 1972 \textit{Lov\' asaz} proved that, A graph is perfect if and only if its complement is perfect \cite{a15}.

\begin{prop}
Let graph $G$ be cospectral with friendship graph $F_n$. Then $G$ and $\overline{G}$ are perfect.
\end{prop}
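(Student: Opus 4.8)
Let graph $G$ be cospectral with friendship graph $F_n$. Then $G$ and $\overline{G}$ are perfect.

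The plan is to show $G$ is \emph{Berge}, that is, $G$ contains no odd hole and no odd antihole; once $G$ is Berge, the Strong Perfect Graph Theorem (or rather, here, the characterization already quoted in the text: perfect $\iff$ Berge) gives that $G$ is perfect, and then the Lov\'asz theorem quoted above gives that $\overline{G}$ is perfect as well. So the whole burden is reduced to excluding odd holes and odd antiholes as induced subgraphs of $G$, and for this I would lean entirely on Proposition~\ref{Prop2,3}, which says that $G$ is $H$-free whenever $H$ has $\lambda_2(H)>1$ or $H$ has at least two eigenvalues strictly less than $-1$.

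First I would handle the odd holes, i.e.\ induced cycles $C_m$ with $m\ge 5$ odd. The eigenvalues of $C_m$ are $2\cos\!\left(\tfrac{2\pi k}{m}\right)$ for $k=0,\dots,m-1$. For $m\ge 5$ the second largest eigenvalue is $2\cos\!\left(\tfrac{2\pi}{m}\right)$, and $2\cos\!\left(\tfrac{2\pi}{m}\right)>1$ exactly when $\tfrac{2\pi}{m}<\tfrac{\pi}{3}$, i.e.\ when $m>6$; so for every odd $m\ge 7$ we have $\lambda_2(C_m)>1$ and part~1 of Proposition~\ref{Prop2,3} forbids $C_m$ as an induced subgraph of $G$. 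The single remaining odd hole is $C_5$ itself: here $\lambda_2(C_5)=2\cos\!\left(\tfrac{2\pi}{5}\right)=\tfrac{\sqrt5-1}{2}<1$, so part~1 does not apply directly, but $C_5$ has two eigenvalues equal to $2\cos\!\left(\tfrac{4\pi}{5}\right)=-\tfrac{\sqrt5+1}{2}<-1$, so part~2 of Proposition~\ref{Prop2,3} rules it out. Thus $G$ contains no odd hole.

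Next I would handle the odd antiholes $\overline{C_m}$ with $m\ge 5$ odd. The eigenvalues of $\overline{C_m}$ are obtained from those of $C_m$: since $C_m$ is regular of degree $2$, one has $\mathrm{Spec}(\overline{C_m})=\{\,m-1-2,\ -1-2\cos(\tfrac{2\pi k}{m})\ (k=1,\dots,m-1)\,\}$. The plan is to show each such antihole also violates one of the two spectral conditions; concretely, for small $m$ one checks directly (for instance $\overline{C_5}\cong C_5$, already excluded, and $\overline{C_7}$ has a second eigenvalue exceeding $1$), and for large $m$ the top non-principal eigenvalue $-1-2\cos\!\left(\tfrac{2\pi\lfloor m/2\rfloor}{m}\right)$ grows so that $\lambda_2(\overline{C_m})>1$. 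The one delicate point — and the step I expect to be the main obstacle — is precisely this antihole analysis: one must verify uniformly over all odd $m$ that either $\lambda_2(\overline{C_m})>1$ or $\overline{C_m}$ has two eigenvalues below $-1$, which requires a careful reading of the shifted-and-negated $C_m$ spectrum rather than a one-line appeal. Once this is done for every odd $m\ge 5$, together with the odd-hole case $G$ is Berge, hence perfect, and by Lov\'asz's theorem $\overline{G}$ is perfect as well, completing the proof.
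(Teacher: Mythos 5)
Your odd-hole analysis is correct (you split into $m\ge 7$ handled by part~1 of Proposition~\ref{Prop2,3} and $m=5$ handled by part~2, whereas the paper handles all odd holes at once by part~2, noting that the two smallest eigenvalues $-2\cos(\pi/m)$ of $C_m$ are below $-1$ for every $m\ge 5$; both versions are fine). The genuine gap is in the antihole case, exactly the step you flag as ``the main obstacle'': the route you sketch there cannot be completed, because part~1 of Proposition~\ref{Prop2,3} is useless for antiholes. The non-principal eigenvalues of $\overline{C_m}$ are $-1-2\cos(2\pi k/m)$, $k=1,\dots,m-1$, so for odd $m$ the largest of them is $-1+2\cos(\pi/m)$, which is \emph{strictly less than} $1$ for every $m$ (it only tends to $1$ from below as $m\to\infty$). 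In particular your claim that $\overline{C_7}$ has a second eigenvalue exceeding $1$ is false: $\lambda_2(\overline{C_7})=2\cos(\pi/7)-1\approx 0.802$. So the condition $\lambda_2(\overline{C_m})>1$ never holds, for any odd $m\ge 5$, and ``for large $m$'' it fails just as badly as for small $m$.

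What closes the gap is the observation the paper uses: the largest non-principal eigenvalue $2\cos(2\pi/m)$ of $C_m$ occurs with multiplicity two (from $k=1$ and $k=m-1$) and is positive for $m\ge 5$; hence $\overline{C_m}$ has the eigenvalue $-1-2\cos(2\pi/m)<-1$ with multiplicity two, so part~2 of Proposition~\ref{Prop2,3} excludes every odd antihole in one stroke (this also re-covers $\overline{C_5}\cong C_5$). With that substitution your argument closes exactly as you intended: $G$ contains no odd hole or antihole, hence $G$ is Berge and therefore perfect, and Lov\'asz's theorem gives that $\overline{G}$ is perfect as well.
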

\begin{proof}
The spectrum of hole, or undirected n-cycle $C_n$ for $n$ odd and $n\geq 5$, is $2cos(2\pi j/n)$ for $(j=0,1,\ldots,n-1)$. It is easy to check that for $n$ odd and $n\geq 5$, $\lambda_{n-1}(C_n)$ and $\lambda_{n}(C_n)$ is strictly less than $-1$. Therefore by Proposition \ref{Prop2,3}, it can not be an induced subgraph of $G$, so $G$ does not have any hole. Also, since the spectrum of antihole, or complement of hole, are $n-3$ and $-1-\lambda_j$ for $(j=1,\ldots,n-1)$, then we have at least two negative eigenvalues less than $-1$, and it shows that $G$ does not have any antihole. So, $G$  and also $\overline{G}$ are perfect graphs.
\end{proof}

\begin{prop}
Let $G$ be connected and cospectral with friendship graph $F_n$. Then each connected induced subgraph of $G$ contains a
dominating (not necessarily induced) complete bipartite graph. Moreover, we can find such a dominating subgraph in polynomial time.
\end{prop}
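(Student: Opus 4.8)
My plan is to derive the statement from a structural-domination theorem for $P_6$-free graphs, so the first task is to exhibit $P_6$ as a forbidden induced subgraph of $G$. The eigenvalues of the path $P_6$ are $2\cos\frac{k\pi}{7}$ with $k=1,\dots,6$; in particular $\lambda_2(P_6)=2\cos\frac{2\pi}{7}\approx 1.247>1$. Proposition~\ref{Prop2,3}(1) then forces $G$ to be $P_6$-free, and since this property is hereditary, every connected induced subgraph $H$ of $G$ is itself connected and $P_6$-free.

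Next I would invoke the characterization of $P_6$-free graphs by dominating subgraphs due to Camby and Schaudt (refining earlier work of Bacsó and Tuza): a connected graph is $P_6$-free if and only if each of its connected induced subgraphs contains a dominating induced $C_6$ or a dominating (not necessarily induced) complete bipartite subgraph. The forward direction, applied to our $P_6$-free graph $G$, immediately shows that every connected induced subgraph $H$ of $G$ carries one of these two dominating structures. Because the proof of that theorem is constructive (the dominating subgraph is assembled from a few BFS layers around a suitable vertex), the structure can be located in polynomial time, which disposes of the ``moreover'' clause.

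What remains, and what I expect to be the crux, is to eliminate the dominating-$C_6$ alternative. This cannot simply be absorbed: the only complete bipartite subgraphs of an induced $C_6$ are single edges and paths $P_3$, and none of these dominates $C_6$, so the desired conclusion can hold for $H=C_6$ only if $G$ has no induced $C_6$ at all. In contrast to the odd-hole case, $C_6$-freeness is not delivered by Proposition~\ref{Prop2,3}, since $\lambda_2(C_6)=1$ and $C_6$ has only one eigenvalue below $-1$, so neither interlacing criterion applies. I would instead rule out an induced $C_6$ combinatorially, using that $G$ is perfect and has no induced $C_5$, together with the spectral invariants already recorded for a cospectral mate of $F_n$: it has $2n+1$ vertices, $3n$ edges, and exactly $n$ triangles. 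The idea is to examine how an induced $C_6$ must attach to the remaining vertices without creating an odd hole or violating the triangle budget, and to extract a contradiction; if no short argument is available, the elimination would instead rest on the finer structural description of cospectral mates obtained elsewhere in the paper. Once $C_6$ is excluded, the dominating-$C_6$ option is vacuous and every connected induced subgraph of $G$ is dominated by a complete bipartite subgraph, as claimed.
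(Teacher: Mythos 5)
Your setup matches the paper's: interlacing gives $P_6$-freeness, and you then invoke exactly the characterization the paper uses --- though the statement you quote (dominating induced $C_6$ or dominating complete bipartite subgraph in every connected induced subgraph) is \cite[Theorem 6]{a16}, due to van 't Hof and Paulusma, which is what the paper cites, rather than Camby--Schaudt. You also correctly isolate the crux: the dominating-$C_6$ alternative must be eliminated, and neither criterion of Proposition \ref{Prop2,3} applies to $C_6$, since $\lambda_2(C_6)=1$ and $-2$ is its only eigenvalue below $-1$. The problem is that you never eliminate it. Your final paragraph is a list of intentions --- ``examine how an induced $C_6$ must attach \dots and extract a contradiction; if no short argument is available, the elimination would instead rest on the finer structural description of cospectral mates obtained elsewhere in the paper'' --- with no actual argument. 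Since that step carries the entire content of the proposition, what you have is a reduction of the statement to an unproved claim, not a proof.

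Note also that the claim you reduce to (no induced $C_6$ anywhere in $G$) is strictly stronger than what the paper establishes, and the paper's one concrete tool does not reach it. The paper assumes a \emph{dominating} induced $C_6$ in $G$; domination forces every vertex outside the cycle to send edges to it, hand-checked interlacing shows the only admissible attachment of an outside vertex is to three pairwise non-adjacent cycle vertices, and an edge count (at least $2(2n-6)+3+6=4n-3$) then exceeds the $3n$ edges of $G$, a contradiction. For a non-dominating induced $C_6$ this collapses: vertices with no neighbour on the cycle contribute nothing, so no edge count gets started, and your suggested ingredients (perfection, $C_5$-freeness, the triangle budget) are never assembled into a substitute. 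You are right, incidentally, that the literal statement about \emph{each} connected induced subgraph forces induced-$C_6$-freeness of all of $G$, since $H\cong C_6$ itself contains no dominating complete bipartite subgraph --- a point the paper's proof, which only treats dominating cycles in $G$ itself, passes over --- but identifying the correct target is not the same as hitting it.
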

\begin{proof}
By \cite[Theorem 6]{a16}, and Proposition \ref{Prop2,3}, we only must prove that $G$ does not have induced dominating $C_6$ as a subgraph.
Suppose $G$ has an induced dominating $C_6$ subgraph. By hand checking and interlacing theorem, it is easy to see that the seventh vertex of $G$ must join to three non adjacent vertices of $C_6$. Also, for each $2n+1-7=2n-6$ remaining vertices, we have at least two edges. So, the total number of edges in $G$ is at least $2(2n-6)+3+6=4n-3$, that is contradiction.
\end{proof}

\section{\bf  Structural Properties of Cospectral Mates of $F_n$}
\vskip 0.4 true cm

It can be seen by Theorem \ref{Niki} that, if $G$ is cospectral with $F_n$ and does not have $C_4$ or induced $C_4$, then $G$ is isomorphic to $F_n$. In the following, we study the cospectral mate of $F_n$ with respect to the $C_5$ subgraph. Also, We know that in the graph $F_n$, and also in its cospectral mate, the average number of triangles containing a given vertex is $\frac{6n}{4n+2}$, that is strictly greater than one. By this evidence, we obtain some interesting results about the cospectral mate of $F_n$.

Firstly, we will prove that; if $G$ is connected, cospectral with $F_n$ and does not have $C_5$ as a subgraph, then $G$ is isomorphic to $F_n$.

\begin{lem}\label{1}
\cite{a11}
Let the graph $G$ is connected and $H$ is a proper subgraph of $G$. Then
$$\lambda_{max}(H)<\lambda_{max}(G).$$
\end{lem}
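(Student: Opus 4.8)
The plan is to recognize that $\lambda_{max}(H)$ and $\lambda_{max}(G)$ are precisely the spectral radii of the nonnegative adjacency matrices $A(H)$ and $A(G)$, and to run the whole argument through Perron--Frobenius theory. The facts I would rely on are that, since $G$ is connected, $A(G)$ is nonnegative and \emph{irreducible}, so $\lambda_{max}(G)$ is a \emph{simple} eigenvalue, it admits a strictly positive eigenvector $\mathbf{v}$, and—crucially for the strictness—it is the \emph{only} eigenvalue of $A(G)$ that has a nonnegative eigenvector at all.

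First I would reduce the three flavours of ``proper subgraph'' (omit a vertex, omit an edge, or both) to one clean matrix inequality. Viewing $A(H)$ as a matrix indexed by all of $V(G)$—inserting a zero row and column at every vertex that $H$ omits and a zero entry at every edge that $H$ omits—one has $0 \le A(H) \le A(G)$ entrywise, with at least one strict inequality somewhere because $H$ is a \emph{proper} subgraph. This lets vertex-deletion and edge-deletion be handled uniformly.

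Next I would prove the weak inequality by a Rayleigh-quotient comparison. Choose a nonnegative eigenvector $\mathbf{x}\ge 0$ of $A(H)$ for $\lambda_{max}(H)$, normalized so that $\mathbf{x}^{\top}\mathbf{x}=1$; such a vector exists by Perron--Frobenius applied to $H$, even if $H$ happens to be disconnected. Because every edge of $H$ is an edge of $G$ and all coordinates of $\mathbf{x}$ are nonnegative, every term dropped in passing from $A(G)$ to $A(H)$ is nonnegative, so
\[
\lambda_{max}(G) \;\ge\; \mathbf{x}^{\top} A(G)\,\mathbf{x} \;\ge\; \mathbf{x}^{\top} A(H)\,\mathbf{x} \;=\; \lambda_{max}(H).
\]

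Finally, the strictness is the real content and the step I expect to be the main obstacle. Suppose equality held throughout. Then $\mathbf{x}$ attains the maximum of the Rayleigh quotient of $A(G)$, hence $\mathbf{x}$ is a nonnegative eigenvector of the connected graph $G$ for $\lambda_{max}(G)$; by the uniqueness clause of Perron--Frobenius it must be a positive scalar multiple of the Perron vector $\mathbf{v}$, and therefore has \emph{no} zero coordinate. If $H$ omits a vertex, this contradicts the zero coordinate placed there. If instead $H$ is spanning but omits an edge $uv$, then strict positivity gives $x_u x_v>0$, whereas the equality $\mathbf{x}^{\top}A(G)\,\mathbf{x}=\mathbf{x}^{\top}A(H)\,\mathbf{x}$ forces the contribution $2\,x_u x_v$ of that missing edge to vanish—again a contradiction. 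In every case the inequality is strict, which proves the lemma. (Equivalently, one could quote the standard Perron--Frobenius comparison theorem: if $0\le B\le C$ entrywise with $C$ irreducible and $B\ne C$, then the spectral radii satisfy $\rho(B)<\rho(C)$.)
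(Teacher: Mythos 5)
Your proof is correct: the reduction of all cases to the entrywise inequality $0\le A(H)\le A(G)$, the Rayleigh-quotient comparison for the weak inequality, and the Perron--Frobenius uniqueness argument (positivity of the maximizing vector versus the zero coordinate at a deleted vertex, or the $2x_ux_v>0$ contribution of a deleted edge) are all sound. The paper itself gives no proof of this lemma --- it is quoted from the textbook of Cvetkovi\'c, Rowlinson and Simi\'c \cite{a11} --- and your argument is essentially the standard Perron--Frobenius proof found there, so there is no substantive difference in approach to report.
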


\begin{lem}\label{2}
Let $G$ be a connected graph that is cospectral with $F_n$ and $\delta(G)$ be the minimum degree of $G$. Then, $\delta(G)=2$ and $G$ has at least three vertices with this minimum degree.
\end{lem}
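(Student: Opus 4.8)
The plan is to first read the global invariants off $\mathrm{Spec}(F_n)$ and then isolate the one genuinely nontrivial point, namely the exclusion of degree-$1$ vertices. Since the number of vertices and edges are determined by the spectrum, $G$ has $|V(G)|=2n+1$ and $|E(G)|=3n$, so its average degree is $\frac{6n}{2n+1}<3$; hence the minimum degree satisfies $\delta(G)\le 2$. As $G$ is connected on more than two vertices it has no isolated vertex, so $\delta(G)\ge 1$. Thus everything reduces to ruling out a pendant vertex and then counting the vertices of degree $2$.

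The key observation I would use is that $A(G)^2-I$ is positive semidefinite. Indeed, the eigenvalues of $A(G)^2-I$ are exactly the numbers $\lambda_i^2-1$, and every $\lambda$ in $\mathrm{Spec}(F_n)$ satisfies $|\lambda|\ge 1$: the values $\pm 1$ give $\lambda^2-1=0$, while for the extreme eigenvalue $\beta=\frac{1}{2}-\frac{1}{2}\sqrt{1+8n}$ one checks $|\beta|\ge 1$ for all $n\ge 1$ (equivalently $\sqrt{1+8n}\ge 3$). Hence $A(G)^2\succeq I$, that is $A(G)^2-I\succeq 0$. (In fact only the two extreme eigenvalues contribute, so this matrix even has rank $2$, but positive semidefiniteness is all I need.)

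Now I would exploit two standard features of a PSD matrix $M=A(G)^2-I$. Its diagonal entry at a vertex $v$ is $(A^2)_{vv}-1=d_v-1$, and its off-diagonal entry $(A^2)_{vw}$ is the number of common neighbours of $v$ and $w$, which is nonnegative. Suppose for contradiction that $\delta(G)=1$, and let $v$ be a pendant vertex with unique neighbour $u$. Then the $v$-th diagonal entry of $M$ is $d_v-1=0$; for a PSD matrix a zero diagonal entry forces the whole row to vanish, since the $2\times2$ principal submatrix on $\{v,w\}$ has determinant $-(A^2)_{vw}^2$, which must be $\ge 0$. But $u$ has degree at least $2$ (otherwise $\{u,v\}$ would be a $K_2$ component, contradicting connectivity), so $u$ has a neighbour $w\neq v$; then $u$ is a common neighbour of $v$ and $w$, giving $M_{vw}=(A^2)_{vw}\ge 1>0$, a contradiction. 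Therefore $\delta(G)\ge 2$, and together with $\delta(G)\le 2$ this gives $\delta(G)=2$.

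Finally I would count. Writing $n_i$ for the number of vertices of degree $i$, the spectral invariants give $\sum_i n_i=2n+1$ and $\sum_i i\,n_i=6n$, whence $\sum_i(3-i)\,n_i=3$. Since $n_0=n_1=0$, the term $i=3$ vanishes, and all terms with $i\ge 4$ are nonpositive, this forces $n_2=3+\sum_{i\ge 4}(i-3)\,n_i\ge 3$, so $G$ has at least three vertices of degree $2$. The only delicate step is the exclusion of pendant vertices, and I expect the crux to be precisely the positive semidefiniteness of $A(G)^2-I$ (via $|\beta|\ge 1$) together with the zero-diagonal consequence for PSD matrices; the remaining degree bookkeeping is routine.
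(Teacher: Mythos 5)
Your proof is correct, but it takes a genuinely different route from the paper's at the one nontrivial step (excluding pendant vertices). The paper argues by contradiction via interlacing: for a pendant vertex $v$ with neighbour $w$, the eigenvalues $\mu_j$ of the induced subgraph $G\setminus\{v,w\}$ satisfy $|\mu_1|\le|\lambda_1|$, $|\mu_{2n-1}|\le|\lambda_{2n+1}|$ and $|\mu_j|\le 1$ for the middle indices, while the recursion $P_G(x)=xP_{G\setminus\{v\}}(x)-P_{G\setminus\{v,w\}}(x)$ evaluated at $x=0$ forces $\bigl|\prod_j\mu_j\bigr|=2n$; equality in the resulting chain of inequalities forces $\mu_1=\lambda_1$, contradicting the strict inequality $\lambda_{\max}(H)<\lambda_{\max}(G)$ for proper subgraphs of connected graphs (Lemma \ref{1}). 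You instead note that every eigenvalue of $F_n$ has modulus at least $1$, so $M=A(G)^2-I$ is positive semidefinite; a pendant vertex gives a zero diagonal entry of $M$, hence (by the $2\times 2$ principal-minor argument) a zero row, which is incompatible with the entry $M_{vw}=(A^2)_{vw}\ge 1$ supplied by a second neighbour $w$ of $u$, whose existence is guaranteed by connectivity. Your argument is self-contained linear algebra --- no interlacing, no characteristic-polynomial recursion, no use of the exact value $|\det A|=2n$ --- and it proves something more general: in any graph whose adjacency eigenvalues all have modulus at least $1$, every vertex of degree one lies in a $K_2$ component. In particular this yields the paper's subsequent Remark (about components of a disconnected cospectral mate) for free, whereas the paper must rerun its argument. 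The paper's method, conversely, is the natural one if one only wants to exploit invariants tied to $F_n$'s spectrum. The closing bookkeeping (degree sum $6n$ over $2n+1$ vertices of degree at least $2$ forces at least three vertices of degree $2$) is essentially identical in both. One tiny quibble: your parenthetical claim that $A(G)^2-I$ has rank $2$ fails for $n=1$, where $\lambda_{2n+1}=-1$ and the rank is $1$; but, as you say, only positive semidefiniteness is needed.
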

\begin{proof}
By contradiction, suppose $G$ has at least one vertex with degree $1$, say $v$, that is adjacent with vertex $w$ of $G$. The graphs $G\setminus\{v\}$ and $G\setminus\{v,w\}$ are induced subgraphs of $G$. Let $\mu_1\geq \mu_2\geq\ldots \geq \mu_{2n-1}$ be the eigenvalues of graph $G\setminus\{v,w\}$, then by Interlacing theorem we have:
$$\lambda_j \geq \mu_j \geq \lambda_{j+2}, (j=1, 2, \ldots, 2n-1)$$
where $\lambda_i$, $(i=1, 2, \ldots, 2n+1)$ are the eigenvalues of $G$. So, we deduce that:
\begin{enumerate}
\item[\textit{i})]
$|\mu_1|\leq |\lambda_1|=\left|\frac{1+\sqrt{8n+1}}{2}\right|$,
\item[\textit{ii})]
$|\mu_{2n-1}|\leq |\lambda_{2n+1}|=\left|\frac{1-\sqrt{1+8n}}{2}\right|$,
\item[\textit{iii})]
$|\mu_j|\leq 1 (j=2, 3, \ldots, 2n-2)$.
\end{enumerate}
Now Theorem 2.2.1 of \cite{a11} implies that $$P_G(x)=xP_{G\setminus\{v\}}(x)-P_{G\setminus\{v,w\}}(x),$$ It follows that $$(-1)^{n}(2n)=P_G(0)=P_{G\setminus\{v,w\}}(0)=\prod_{j=1}^{2n-1}{\mu_j}.$$
Therefore, by $(i), (ii)$ and $(iii)$ we obtain $\mu_1=\lambda_1$, that is a contradiction with Lemma \ref{1}. So, graph $G$ does not have any vertex of degree one.

Now, suppose $G$ has $t$ vertices of degree $2$. Therefore we have $$2t+3(2n+1-t)\leq\sum_{i=1}^{2n+1}{deg(v_i(G))=6n},$$
It implies that $t\geq3$, $\delta(G)=2$ and $G$ has at least three vertices of degree two.
\end{proof}

\begin{rem}
By same arguments, we can show that if $G$ is not connected, then the component of $G$ that is not isomorphic to $K_2$ does not have any vertex with degree one. But in this case, the minimum degree of $G$ can not be determine, since it depends to the number of components that is isomorphic to $K_2$.
\end{rem}

\begin{lem}\label{4}
Let $G$ be graph of order $n$ and degree sequence $d_1, d_2, \ldots, d_n$. Then, the number of closed walks of length five in $G$ is given by$$tr(A^5(G))=\sum_{i=1}^{n}{\lambda_i^5(G)}=30N_G(C_3)+10N_G(C_5)+10N_G(C_3^*),$$ where $C_3^*$ is isomorphic to $K_3$ with one pendant.
\end{lem}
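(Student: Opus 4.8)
The plan is to read $tr(A^5(G))=\sum_{i}\lambda_i^5(G)$ as the number of closed walks of length $5$ in $G$ (the standard fact recorded in the first lemma of this paper), and then to count these walks by sorting them according to the subgraph of $G$ that each walk \emph{traces out}, namely the set of vertices and edges it actually uses. For each isomorphism type of traced subgraph I would determine how many distinct closed walks of length $5$ produce it, and then sum these local contributions over all copies of that type inside $G$.

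First I would pin down which subgraphs can occur. A closed walk of length $5$, recorded with edge multiplicities, is an Eulerian closed walk of a connected multigraph whose multiplicities sum to $5$ and in which every vertex has even degree. Since $5$ is odd, the walk cannot live on a bipartite graph, so the traced simple graph $H$ must contain an odd cycle. Writing $e$ for the number of distinct edges of $H$, each used at least once, one gets $3\le e\le 5$: fewer than three edges cannot carry an odd cycle, and more than five is impossible because the multiplicities already sum to $5$. A short case analysis on $e$ together with the even-degree (parity) requirement leaves exactly three possibilities. When $e=3$ the only non-bipartite connected graph is the triangle $C_3$. When $e=4$ the only non-bipartite connected simple graph is the triangle with a pendant $C_3^*$, and since its pendant edge is a bridge it must be traversed an even number of times, hence exactly twice. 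When $e=5$ every vertex must have degree $2$, forcing $H\cong C_5$, traversed as an Eulerian circuit.

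Next I would count the length-$5$ closed walks contributed by each type. On $K_3$, whose eigenvalues are $2,-1,-1$, one has $tr(A^5)=2^5+2(-1)^5=30$, and since an odd closed walk cannot be confined to two edges, all $30$ of these walks use the full triangle; this gives the coefficient $30$ of $N_G(C_3)$. On $C_5$ the only length-$5$ closed walks are the two orientations of the whole cycle, started at each of its five vertices, giving $5\cdot 2=10$ and the coefficient of $N_G(C_5)$. On $C_3^*$ I would compute $tr(A^5)=40$ (for instance from its characteristic polynomial via the deletion formula of Theorem 2.2.1 of \cite{a11}, or directly by Newton's identities from the power sums $p_1,\dots,p_4$); of these, the $30$ walks that never leave the triangle are already recorded under $N_G(C_3)$, so exactly $40-30=10$ genuinely use the pendant edge, which is the coefficient of $N_G(C_3^*)$.

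The main point to get right, and the only real obstacle, is verifying that this bookkeeping is a genuine partition: every closed walk of length $5$ traces a unique edge set, and the three types are mutually exclusive as traced sets, since a $C_3^*$-walk uses its pendant, a $C_3$-walk does not, and a $C_5$-walk uses five distinct edges. Hence no walk is double counted and none is omitted, even though one triangle of $G$ may serve as the triangular part of several distinct copies of $C_3^*$. Granting this, summing the local counts over all subgraphs of $G$ yields $tr(A^5(G))=30N_G(C_3)+10N_G(C_5)+10N_G(C_3^*)$, as claimed.
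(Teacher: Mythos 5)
Your proposal is correct and takes essentially the same approach as the paper: both classify closed $5$-walks by the subgraph they span ($C_3$, $C_5$, or $C_3^*$), obtain the per-copy counts $30$, $10$, $40$ from eigenvalue power sums, and resolve the overcount $40-30=10$ for the coefficient of $N_G(C_3^*)$. The only difference is that you carefully justify the two steps the paper treats as evident, namely the parity/Eulerian argument showing these three are the only possible traced subgraphs, and the verification that sorting walks by their traced edge set yields a genuine partition with no double counting.
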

\begin{proof}
It is easy to see that, the only subgraphs of $G$ that participate in counting of closed walk of length five are, $C_3$, $C_5$ and $C_3^*$. By summation the fifth power of the eigenvalues of these graphs, the coefficients of $N_G(C_3), N_G(C_5)$ and $N_G(C_3^*)$ must be $30, 10$ and $40$, respectively. But, $C_3$ is counted twice in $G$ and $C_3^*$, $30$ times for each $C_3^*$, and we have to subtract it. Therefore, we obtain the equation that is mentioned in lemma.
\end{proof}

\begin{lem}\label{5}
Suppose the function $S$, $S:\mathbb{R}^n\longmapsto\mathbb{R}$, be defined as $S(x_1,x_2,\ldots,x_n)=\sum_{i=1}^{n}{x_i^2}$, and $\sum_{i=1}^{n}{x_i}=M$. Then we have
\begin{enumerate}
\item[\textit{i})]
If $x_i\geq0$  $(i=1,2,\ldots,n)$, then the maximum of $S$ is $M^2$ and this value only happens in $M\textbf{e}_i$, where $\{\textbf{e}_1, \textbf{e}_2, \ldots, \textbf{e}_n\}$ is the standard orthonormal basis of $\mathbb{R}^n$.
\item[\textit{ii})]
If $x_i\geq d$  $(i=1,2,\ldots,n)$, then the maximum of $S$ is $(n-1)d^2+(M-(n-1)d)^2$ and this value only happens in $(M-(n-1)d)\textbf{e}_i+d\textbf{j}$, where $\textbf{j}$ denotes the all-1 vector of size $1\times n$.
\end{enumerate}
\end{lem}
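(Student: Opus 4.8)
The plan is to establish part (i) by a one-line algebraic inequality and then obtain part (ii) from it through an affine shift of coordinates, so that essentially all the work sits in the first part.

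For part (i), I would start from the identity $M^2=\bigl(\sum_{i=1}^n x_i\bigr)^2=\sum_{i=1}^n x_i^2+2\sum_{i<j}x_ix_j=S(x)+2\sum_{i<j}x_ix_j$. Since every $x_i\ge 0$, each cross term $x_ix_j$ is nonnegative, whence $S(x)\le M^2$. Equality forces $x_ix_j=0$ for all $i\ne j$, i.e.\ at most one coordinate is nonzero; together with $\sum x_i=M$ this pins down exactly one coordinate equal to $M$ and the rest $0$, giving precisely the maximizers $M\mathbf{e}_i$. Equivalently, one may note that $S$ is convex and the feasible set is the compact simplex $\{x:\ x_i\ge 0,\ \sum x_i=M\}$, whose extreme points are the $M\mathbf{e}_i$, and a convex function attains its maximum over a polytope at a vertex; I prefer the algebraic argument because it delivers the equality case for free.

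For part (ii), I would set $y_i=x_i-d$, so that the hypotheses become $y_i\ge 0$ and $\sum_{i=1}^n y_i=M-nd=:M'$; feasibility of the original region already requires $M\ge nd$, so $M'\ge 0$ and part (i) applies to the $y_i$. Expanding, $S(x)=\sum(y_i+d)^2=\sum y_i^2+2d\,M'+nd^2$, and part (i) gives $\sum y_i^2\le (M')^2$ with equality iff exactly one $y_i$ equals $M'$. Substituting $M'=M-nd$ and simplifying shows the bound equals $(n-1)d^2+(M-(n-1)d)^2$, as claimed, and the equality case translates back to one coordinate $x_i=M-(n-1)d$ with all remaining coordinates equal to $d$.

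The mathematical content here is light, so the real care is bookkeeping rather than any genuine obstacle: one must verify the polynomial identity $(M-nd)^2+2d(M-nd)+nd^2=(n-1)d^2+(M-(n-1)d)^2$ and track the equality set through the shift $y_i=x_i-d$. It is worth recording the feasibility hypothesis $M\ge nd$ explicitly, since otherwise the region in part (ii) is empty. Finally, writing the extremal point of part (ii) in the vector notation of part (i), the maximizer is $(M-nd)\mathbf{e}_i+d\mathbf{j}$ (one entry $M-(n-1)d$, the others $d$), which is the configuration the displayed formula is meant to encode.
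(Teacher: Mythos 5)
Your proof is correct and follows essentially the same route as the paper's: part (i) via the identity $M^2=S+2\sum_{i<j}x_ix_j$ with nonnegative cross terms (the paper writes this as $S=M^2-T$ and minimizes $T$), and part (ii) by the shift $y_i=x_i-d$ reducing to part (i). Your side remarks are also sound refinements of the paper's wording: the feasibility condition $M\geq nd$ is indeed implicit, and the stated maximizer should be read as $(M-nd)\textbf{e}_i+d\textbf{j}$ (one entry $M-(n-1)d$, the rest equal to $d$) for the coordinates to sum to $M$.
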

\begin{proof}
Let $T=2\sum_{1\leq i<j\leq n}{x_ix_j}$. Now for proving case $(i)$, it suffice to note that $S=M^2-T$, and for maximizing the function $S$, we must minimize the function $T$. But the minimum of $T$ is zero and it happens only in $M\textbf{\textit{e}}_i$, since we have  $\sum_{i=1}^{n}{x_i}=M$.\\
For proving case $(ii)$, let $y_i=x_i-d$ $(i=1, 2, \ldots, n)$ and $T(y_1,y_2,\ldots,y_n)=\sum_{i=1}^{n}{y_i^2}$. So, $y_i\geq 0$ and $\sum_{i=1}^{n}{y_i}=M-nd$. Now by using part $(i)$, the maximum of $T$ is $(M-nd)^2$ and it only happens in $(M-nd)\textbf{\textit{e}}_i$. Therefore, by backing the changed variables and the fact $S=T-nd^2+2Md$, we obtain the mentioned results.
\end{proof}

\begin{lem}\label{6}
Suppose the function $S$, $S:\mathbb{R}^n\longmapsto\mathbb{R}$, is defined as $S(x_1,x_2,\ldots,x_n)=\sum_{i=1}^{n}{t_ix_i}$, such that $t_i$ $(i=1, 2, \ldots, n)$ are real numbers, $\sum_{i=1}^{n}{x_i}=M$ and $x_i\geq d$. If we have $0\leq t_1\leq t_2\leq \ldots \leq t_n$, then the maximum of function $S$ is $d(t_1+t_2+\ldots+t_{n-1})+t_n(M-(n-1)d)$.
\end{lem}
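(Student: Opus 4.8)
The plan is to reduce this constrained linear optimisation to the nonnegativity setting by the same translation trick $y_i = x_i - d$ used in the proof of Lemma~\ref{5}, and then to exploit that $t_n$ dominates every coefficient. Before anything else I would record the feasibility condition: since $x_i \ge d$ for all $i$ forces $M = \sum_i x_i \ge nd$, the feasible region is nonempty only when $M \ge nd$, and I would assume this throughout (the statement being vacuous otherwise). Note that $M \ge nd$ is equivalent to $M - (n-1)d \ge d$, which is exactly what is needed to keep the extremal point feasible.

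Setting $y_i = x_i - d$ turns the constraints $x_i \ge d$ and $\sum_i x_i = M$ into $y_i \ge 0$ and $\sum_{i=1}^n y_i = M - nd \ge 0$, and rewrites the objective as
\[
S = \sum_{i=1}^n t_i(d + y_i) = d\sum_{i=1}^n t_i + \sum_{i=1}^n t_i y_i .
\]
Since the hypothesis gives $t_i \le t_n$ for every $i$ and each $y_i \ge 0$, I would bound the second sum termwise,
\[
\sum_{i=1}^n t_i y_i \le t_n \sum_{i=1}^n y_i = t_n(M - nd),
\]
and substitute back to obtain
\[
S \le d\sum_{i=1}^n t_i + t_n(M - nd) = d(t_1 + \cdots + t_{n-1}) + t_n\bigl(M - (n-1)d\bigr),
\]
using the elementary identity $d t_n + t_n(M - nd) = t_n\bigl(M - (n-1)d\bigr)$. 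This establishes the asserted upper bound.

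It then remains to check attainment. The point with $x_n = M - (n-1)d$ and $x_i = d$ for $i < n$ is feasible precisely because $M \ge nd$ guarantees $x_n \ge d$, and evaluating $S$ there reproduces the right-hand side exactly; hence the bound is achieved and is the maximum. (In the language of Lemma~\ref{5}, the maximiser again sits at a vertex of the translated simplex with one coordinate equal to $M-(n-1)d$ and the rest equal to $d$; the only new feature is that, the objective now being linear rather than quadratic, the large coordinate must be placed at the index carrying the largest weight $t_n$.)

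There is no genuine obstacle here. The single point requiring care is the feasibility inequality $M \ge nd$: it is what simultaneously makes the extremal point admissible and makes the translated masses $y_i$ genuinely nonnegative, so that the termwise estimate $\sum t_i y_i \le t_n \sum y_i$ points in the correct direction. An equivalent route, should one prefer to avoid the substitution, is an exchange argument: $S$ is linear, so its maximum over the polytope is attained at a vertex, and moving mass from any $x_i$ with $i<n$ and $x_i > d$ over to $x_n$ changes $S$ by a nonnegative amount $t_n - t_i \ge 0$, driving every maximiser to the claimed configuration.
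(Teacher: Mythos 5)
Your proof is correct, and it is in fact more complete than the one in the paper. The paper disposes of this lemma in a single sentence: since the $t_i$ are increasing, the result is ``clear'' by Lemma \ref{5}. That citation is logically loose, because Lemma \ref{5} concerns the quadratic objective $\sum_{i} x_i^2$, not a weighted linear form $\sum_i t_i x_i$; what the paper really means is that the same vertex of the translated simplex is extremal, but it never argues this. You supply exactly the missing argument: after the same substitution $y_i = x_i - d$ used in Lemma \ref{5}, the termwise estimate $\sum_i t_i y_i \le t_n \sum_i y_i$ (which needs only $t_i \le t_n$ and $y_i \ge 0$) yields the bound $S \le d(t_1+\cdots+t_{n-1}) + t_n\bigl(M-(n-1)d\bigr)$, and the explicit feasible point $x_i = d$ for $i<n$, $x_n = M-(n-1)d$ attains it. Your side remarks are also sound: the feasibility condition $M \ge nd$ is indeed what makes both the translation and the extremal point legitimate (the paper never mentions it, though in its application $M=3n$, $d=2$, so it holds), and the hypothesis $t_1 \ge 0$ is never actually needed for the upper bound. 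The exchange argument you sketch at the end is a valid second route. In short: same underlying idea --- translate to the simplex and push all the surplus mass onto the coordinate with the largest weight --- but your write-up closes a genuine gap that the paper's one-line proof leaves open.
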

\begin{proof}
Since the real numbers $t_i$ $(i=1,2,\ldots,n)$ are increasing, by Lemma \ref{5} the result is clear.
\end{proof}

\begin{thm}
Suppose the graph $G$ is connected and cospectral with friendship graph $F_n$. If $G$ does not have $C_5$ as a subgraph, then $G$ is isomorphic to $F_n$.
\end{thm}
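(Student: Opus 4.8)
The plan is to reduce everything to two closed-walk counts and then run a short weighting argument on the degrees. First I would record the global data forced by cospectrality: $G$ has $2n+1$ vertices, $3n$ edges, $\lambda_1=\frac{1+\sqrt{8n+1}}{2}$ (so $\lambda_1^2-\lambda_1=2n$), minimum degree $2$ (Lemma \ref{2}), and $\sum_v d_v=6n$. Using $\mathrm{tr}(A^3)=\sum\lambda_i^3=6n$ I get $6\,N_G(C_3)=6n$, hence $N_G(C_3)=n$; and using Lemma \ref{4} together with $\sum\lambda_i^5=20n^2+10n$ and the hypothesis $N_G(C_5)=0$ I get $N_G(C_3^\ast)=2n(n-1)$. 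The point of introducing $C_3^\ast$ is that it is a purely degree-theoretic quantity: writing $t(v)$ for the number of triangles through $v$, each triangle $T$ and each vertex $v\in T$ produces exactly $d_v-2$ copies of $C_3^\ast$ (its neighbours outside $T$), so
\[
N_G(C_3^\ast)=\sum_{v}(d_v-2)\,t(v)=2n(n-1).
\]

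Next I would exploit two elementary constraints: $\sum_v(d_v-2)=6n-2(2n+1)=2n-2$, and $t(v)\le N_G(C_3)=n$ for every $v$ (there are only $n$ triangles in total). Setting $w_v:=d_v-2\ge 0$, the two displayed identities say $\sum_v w_v t(v)=2n(n-1)=n\sum_v w_v$, that is
\[
\sum_v w_v\,(t(v)-n)=0 .
\]
Since every factor $w_v\ge 0$ while $t(v)-n\le 0$, each summand is $\le 0$, so all of them vanish. Hence \emph{every vertex of degree at least $3$ lies in all $n$ triangles}. This is the crux, and the step I expect to be the real engine of the proof; note that it is precisely the hypothesis $N_G(C_5)=0$ that makes the right-hand side exactly $0$ (an undetected $C_5$ would replace $0$ by $-N_G(C_5)$ and kill the argument). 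The one genuinely delicate point here is matching the counting convention for $C_3^\ast$ to the coefficient $10$ in Lemma \ref{4}, which I would pin down by verifying the identity against $F_n$ itself.

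Finally I would finish by a short case analysis on the number of vertices of degree $\ge 3$. If there is exactly one, say $c$, then all other $2n$ vertices have degree $2$, so $d_c=6n-2\cdot 2n=2n$; a degree-$2n$ vertex in a connected graph on $2n+1$ vertices is adjacent to all others, each remaining vertex then has exactly one further neighbour, these pair off into $n$ triangles through $c$, and $G\cong F_n$. The main obstacle is ruling out two (or more) vertices $c_1,c_2$ of degree $\ge 3$: both lie in every triangle, so $c_1\sim c_2$ and every triangle is $\{c_1,c_2,x_i\}$ with $x_1,\dots,x_n$ distinct; a third high-degree vertex would again have to lie in all $n$ triangles, forcing it to be some $x_i$, which is impossible for $n\ge2$ since $t(x_i)=1$. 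Thus every vertex other than $c_1,c_2$ has degree exactly $2$, and a degree count gives $d_{c_1}=d_{c_2}=n+1$ with $N(c_1)=\{c_2,x_1,\dots,x_n\}$ and $N(c_2)=\{c_1,x_1,\dots,x_n\}$, so each $x_i$ is adjacent only to $c_1,c_2$. But then the remaining $2n+1-(n+2)=n-1$ vertices are adjacent only to one another and disconnect $G$ — a contradiction for $n\ge2$ (the case $n=1$ being $F_1=K_3$, trivially determined). Hence exactly one vertex has degree $\ge 3$ and $G\cong F_n$. As an alternative ending one could route through Theorem \ref{Niki}: with Lemma \ref{5} the sequence $(2n,2^{2n})$ is the unique maximiser of $\sum_v d_v^2$ and is equivalent to $C_4$-freeness, after which Nikiforov's equality case yields $F_n$ directly.
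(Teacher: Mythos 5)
Your proof is correct, and its first half coincides with the paper's: both arguments use Lemma \ref{4} together with $\mathrm{tr}(A^5)=20n^2+10n$ and the $C_5$-free hypothesis to pin down $N_G(C_3^*)=2n(n-1)$, and both convert this, via the identity $N_G(C_3^*)=\sum_v t(v)(d_v-2)$, into $\sum_v t(v)\,d_v=2n^2+4n$. Where you genuinely diverge is in the extremal step that extracts $F_n$ from this identity. The paper treats $\sum_i t_i\deg_G(v_i)$ as a functional to be maximized over the family $\mathcal{A}$ of all connected graphs with the right vertex, edge and triangle counts, invokes the rearrangement-type Lemmas \ref{5} and \ref{6} to bound it by its value at the degree vector $Y_0=(2,\dots,2,2n)$, and then asserts that $F_n$ is the only member of $\mathcal{A}$ realizing $Y_0$. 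You instead keep everything local: writing the quantity as $6n+\sum_v w_v t(v)$ with $w_v=d_v-2\ge 0$ (legitimate by Lemma \ref{2}, since $G$ is connected) and using only the trivial bound $t(v)\le n=N_G(C_3)$, the equality $\sum_v w_v\bigl(t(v)-n\bigr)=0$ forces every vertex of degree at least $3$ to lie in \emph{all} $n$ triangles. This complementary-slackness step yields a structural conclusion rather than a degree sequence, and your ensuing case analysis --- one high-degree vertex gives $d_c=2n$ and hence $G\cong F_n$ directly; two high-degree vertices force every triangle through both, pin both neighbourhoods to $\{c_2,x_1,\dots,x_n\}$ and $\{c_1,x_1,\dots,x_n\}$, and strand the remaining $n-1$ degree-$2$ vertices in a separate component, contradicting connectedness --- supplies explicitly the realizability and uniqueness argument that the paper compresses into the single sentence that $F_n$ is the only graph in $\mathcal{A}$ with degree sequence $Y_0$. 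Your ending is therefore more self-contained and, in the equality analysis, tighter than the paper's; what the paper's formulation buys is a pair of standalone optimization lemmas stated independently of the graph context.

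One caveat: your closing aside, that one could instead finish through Theorem \ref{Niki} because the maximizing degree sequence ``is equivalent to $C_4$-freeness,'' is not self-contained as stated. The equivalence itself is true for a graph cospectral with $F_n$ with minimum degree $2$ (it follows from the $\mathrm{tr}(A^4)$ identity and Lemma \ref{5}), but your slackness step alone gives neither the degree sequence $(2n,2,\dots,2)$ nor $C_4$-freeness; one still needs the case analysis you already carried out, so the Nikiforov route is not actually a shortcut. Since your main argument never uses this aside, it does not affect correctness.
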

\begin{proof}
The graph $G$ is cospectral with $F_n$, so the number of vertices, edges, triangles and closed walk of length 5 are equal in these both graphs. By Lemma \ref{4}, we have $N_G(C_3^*)=N_{F_n}(C_3^*)$. Now, we calculate the number of $N_G(C_3^*)$ in two ways. Suppose $v_1, v_2, \ldots, v_{2n+1}$, are the vertices of graph $G$. Let $t_i$  $(i=1,2,\ldots,2n+1)$, shows the number of triangles that has vertex $v_i$. So, the total number of $C_3^*$ that has $v_i$ as a vertex with degree three is $t_i(deg_G(v_i)-2)$. Therefore, we have
\begin{align}
N_G(C_3^*)=\sum_{i=1}^{2n+1}{t_i(deg_G(v_i)-2)}=\sum_{i=1}^{2n+1}{t_ideg_G(v_i)}-2\sum_{i=1}^{2n+1}{t_i}.\label{one}
\end{align}
 But we have
\begin{align}
\sum_{i=1}^{2n+1}{t_i}=3N_G(C_3)=3n\Longrightarrow N_G(C_3^*)=\sum_{i=1}^{2n+1}{t_ideg_G(v_i)-6n} \label{two}
\end{align}
Since $N_{F_n}(C_3^*)=2n^2+4n-6n$, by (\ref{one}) and (\ref{two}) we obtain
\begin{align}
\sum_{i=1}^{2n+1}{t_ideg_G(v_i)}=2n^2+4n \label{three}
\end{align}
Now we claim that $G$ is isomorphic to $F_n$. Suppose $x_i\geq 2, y_i\geq 0$  $(i=1,2,\ldots,2n+1)$, $\sum_{i=1}^{2n+1}{x_i}=3n$, $\sum_{i=1}^{2n+1}{y_i}=6n$ and define the function $F$ as follow $$F(x_1,x_2,\ldots ,x_{2n+1},y_1,y_2,\ldots ,y_{2n+1})=\sum_{i=1}^{2n+1}{x_iy_i}.$$
We show that, if $(x_1,\ldots,x_{2n+1})=(t_1,\ldots,t_{2n+1})$ and $(y_1,\ldots,y_{2n+1})=(deg_G(v_1),\ldots,deg_G(v_{2n+1}))$, then the maximum of function $F$ is happen for the graph $F_n$.

Let $\mathcal{A}=\{G_1,G_2,\ldots,G_k\}$ be the set of all connected graphs with $2n+1$ vertices, $3n$ edges, $n$ triangles, minimum degree 2 and without any $C_5$ subgraph. The vertices of $G_i$ $(i=1,2,\ldots,k)$ can be labelled in such a way that, for each graph $G_i$ we have $t_1\leq t_2\leq \ldots \leq t_{2n+1}$. It is easy to see that, $F_n$ is an element of $\mathcal{A}$. Now, we want to find the maximum of $\sum_{i=1}^{2n+1}{t_ideg_G(v_i)}$ among the members of $\mathcal{A}$. We prove that, the maximum value of $\sum_{i=1}^{2n+1}{t_ideg_G(v_i)}$ is equal to $2n^2+4n$ and it only happens for the graph $F_n$.

For each graph $G\in \mathcal{A}$, let $X_G=(t_1,t_2,\ldots,t_{2n+1})$, $Y_G=(deg_G(v_1),deg_G(v_2),\ldots,deg_G(v_{2n+1}))$ and $F(G)=F(X_G,Y_G)=X_G\cdot Y_G$. It is clear that $F(F_n)=2n^2+4n$. By Lemma \ref{6}, for each graph $G\in \mathcal{A}$ we have $$F(G)=t_1deg_G(v_1)+\ldots +t_{2n+1}deg_G(v_{2n+1})\leq 2t_1+\ldots +2nt_{2n+1}.$$
But the latter inequality implies that, for each graph $G\in \mathcal{A}$ we have $$F(G)\leq F(X_G,Y_0),$$ where $Y_0=(2,2,\ldots,2n)$. Among the members of $\mathcal{A}$, the only graph that has $Y_0$ as a degree sequence is $F_n$. Therefore, the graph $G$ is isomorphic to $F_n$, since by (\ref{three}) we have $\sum_{i=1}^{2n+1}{t_ideg_G(v_i)}=F(F_n)$.
\end{proof}

In the next proposition, we show that the cospectral mate of friendship graph has a lot of vertices of degree two.

\begin{prop}\label{29}
Let the graph $G$ is cospectral with friendship graph $F_n$, and let $d_2(G)$ and $\triangle(G)$ be the number of vertices of degree two and maximum degree of $G$, respectively. Then we have
\begin{enumerate}
\item[\textit{i})]
If $G$ be disconnected, $G=mK_2\cup G_1$, then $$m\leq \frac{\lambda_{max}|V(G)|-2|E(G)|}{-2\lambda_{min}}.$$ Moreover, if $d_2(G_1)\neq 0$, then $d_2(G_1)\geq \lambda_{max}-4m$.
\item[\textit{ii})]
If $G$ be connected, then $d_2(G)\geq 1+\lambda_{max}$.
\end{enumerate}
\end{prop}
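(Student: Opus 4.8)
The plan is to read everything off the explicit spectrum of $F_n$ and then handle the two items with two different classical bounds for the largest eigenvalue. First I record the invariants shared by every cospectral mate: $|V(G)|=2n+1$, $2|E(G)|=6n$, $\lambda_{max}=\tfrac12(1+\sqrt{1+8n})$ and $\lambda_{min}=\tfrac12(1-\sqrt{1+8n})$. The one algebraic identity that makes the stated formulas come out cleanly is $\lambda_{max}+\lambda_{min}=1$, equivalently $-\lambda_{min}=\lambda_{max}-1$. In the disconnected case $G=mK_2\cup G_1$ one has $\mathrm{Spec}(mK_2)=\{1^{m},(-1)^{m}\}$ and $\lambda_{max}>1$, so the Perron root is carried by the large component: $\lambda_{max}(G_1)=\lambda_{max}$, while $G_1$ has $2n+1-2m$ vertices and $3n-m$ edges.

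For the first inequality in (i) I would use only the Rayleigh-quotient (average-degree) lower bound $\lambda_{max}(H)\ge 2|E(H)|/|V(H)|$, obtained by testing the quadratic form of $A(H)$ against the all-ones vector, applied to $H=G_1$. This gives $\lambda_{max}(2n+1-2m)\ge 6n-2m$; collecting the terms in $m$ and replacing $\lambda_{max}-1$ by $-\lambda_{min}$ turns it into $\lambda_{max}|V(G)|-2|E(G)|\ge -2m\lambda_{min}$, which is exactly the claimed bound on $m$.

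The second part of (i) and part (ii) are instead a handshake count steered by the companion bound $\lambda_{max}(H)\le\Delta(H)$. In $G_1$ every vertex has degree at least $2$ (by the remark following Lemma \ref{2}; for connected $G$ this is Lemma \ref{2} itself). Writing $d_2$ for the number of degree-$2$ vertices, placing the remaining $|V(G_1)|-d_2$ vertices at degree $\ge 3$, and singling out one vertex of maximum degree $\Delta(G_1)\ge\lambda_{max}$, summation of degrees yields $6n-2m\ge 2d_2+\Delta(G_1)+3(|V(G_1)|-d_2-1)$. After substituting $|V(G_1)|=2n+1-2m$ the terms in $n$ cancel, leaving the linear inequality $d_2\ge\Delta(G_1)-4m\ge\lambda_{max}-4m$ (the hypothesis $d_2(G_1)\neq0$ being the regime in which this is informative). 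Setting $m=0$ recovers the connected bound $d_2(G)\ge\Delta(G)\ge\lambda_{max}$.

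The step I expect to be delicate is squeezing out the extra $+1$ in (ii), since the raw count only yields $d_2(G)\ge\lambda_{max}$. Here I would invoke that a connected cospectral mate is non-regular — regularity is determined by the spectrum (Lemma 2.1) and $F_n$ is not regular for $n\ge 2$ — so Lemma \ref{1} forces the strict inequality $\lambda_{max}<\Delta(G)$; combined with the integrality of $\Delta(G)$ and $d_2(G)$ this is meant to promote $d_2(G)\ge\Delta(G)$ to $d_2(G)\ge 1+\lambda_{max}$, with the remaining case $n=1$ (where $G=K_3$) checked by hand. Making this final promotion airtight precisely for those $n$ with $\lambda_{max}\notin\mathbb{Z}$, where $\lceil\lambda_{max}\rceil<1+\lambda_{max}$, is the only genuinely fussy point; everything else is bookkeeping layered on top of the two eigenvalue bounds.
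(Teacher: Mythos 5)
Your treatment of part (i) is essentially identical to the paper's: the bound on $m$ is obtained by applying the average-degree inequality (Theorem 3.2.1 of \cite{a11}) to the component $G_1$, which carries the Perron root, and then substituting $\lambda_{max}-1=-\lambda_{min}$; the bound on $d_2(G_1)$ comes from the same degree-sum count you describe, using $\delta(G_1)\ge 2$ (the remark after Lemma \ref{2}) and one vertex of degree at least $\lambda_{max}$. For this part your proof is complete and coincides with the paper's in every essential step.

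For part (ii), the ``genuinely fussy point'' you flag is a real gap --- and it is a gap in the paper's own proof, not merely in your attempt. The paper asserts that since $G$ is not regular, Theorem 3.2.1 of \cite{a11} yields $\Delta(G)\ge 1+\lambda_{max}$, and then runs the degree count with this value. But non-regularity of a connected graph gives only the strict inequality $\lambda_{max}<\Delta(G)$ (this is the equality case of Theorem 3.2.1; your attribution of it to Lemma \ref{1} needs an extra embedding argument, but the fact itself is standard), and the implication ``connected and non-regular implies $\Delta\ge 1+\lambda_1$'' is false in general: for $P_3$ one has $\lambda_1=\sqrt{2}$ while $\Delta=2<1+\sqrt{2}$. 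Integrality of $\Delta(G)$ upgrades the strict inequality to $\Delta(G)\ge\lfloor\lambda_{max}\rfloor+1$, and $\lfloor\lambda_{max}\rfloor+1\ge 1+\lambda_{max}$ holds precisely when $\lambda_{max}=\frac{1}{2}\left(1+\sqrt{8n+1}\right)$ is an integer, i.e.\ when $8n+1$ is a perfect square ($n$ a triangular number). For all other $n$, your argument and the paper's establish only $d_2(G)\ge\Delta(G)\ge\lceil\lambda_{max}\rceil$, which is strictly weaker than the stated $d_2(G)\ge 1+\lambda_{max}$. So you should not expect more careful bookkeeping to close this: as written, part (ii) needs either a new ingredient (for instance, excluding the extremal configuration $d_2(G)=\Delta(G)=\lceil\lambda_{max}\rceil$ by a finer count) or a weakening of the statement to $d_2(G)\ge\lceil\lambda_{max}\rceil$ --- which, incidentally, would cost the paper nothing, since its later results invoke only part (i) of this proposition.
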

\begin{proof}
It is easy to see that, if $G=mK_2\cup G_1$ then the component $G_1$ has $3n-m$ edges, $2n+1-2m$ vertices, $\lambda_{max}=\lambda_1(F_n)$ and $\lambda_{min}=\lambda_{2n+1}(F_n)$. Now by Theorem 3.2.1 of \cite{a11}, we have $$\frac{2(3n-m)}{2n+1-2m}\leq \lambda_{max},$$ by simplification and using $\lambda_{max}-1=-\lambda_{min}$, we proved the first part of $(i)$. Again, by using Theorem 3.2.1 of \cite{a11} and $\sum_{i=1}^{2n+1}{deg_{G}(v_i)=6n}$, we obtain $$2m+2d_2(G_1)+\lambda_{max}+3(2n+1-2m-t-1)\leq 6n,$$ so by simplification, the second part of $(i)$ is proved.

For proving part $(ii)$, we notice that the graph $G$ is not regular. So by Theorem 3.2.1 of \cite{a11} we obtain $\triangle(G)\geq 1+\lambda_{max}$. Now, we have to have $$2d_2(G)+1+\lambda_{max}+3(2n+1-d_2(G)-1)\leq 6n.$$ By simplification, we obtain the requested result.
\end{proof}

In the following, we obtain some structural properties of the cospectral mate of friendship graph. Actually, these results are some good evidences to show that the friendship graph is $DS$.

\begin{defn}
Suppose $G$ is a graph and $H$ is its subgraph. If $x$ be a vertex of $H$ with degree $r$ in $G$, we denote it by $d_G(x)=r$.
\end{defn}

\begin{lem}\label{Tx}
Let the graph $G$ is cospectral with $F_n$ and $G$ has a subgraph $K_3$ which has two vertices of degree $2$ in $G$. Then $G$ is isomorphic to $F_n$.
\end{lem}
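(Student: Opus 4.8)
The plan is to induct/peel, but ultimately to reduce everything to one structural claim. Write the given triangle as $\{a,b,c\}$ with $\deg_G(a)=\deg_G(b)=2$; then $a$ and $b$ are adjacent only to each other and to $c$, so $N[a]=N[b]=\{a,b,c\}$, i.e. $a,b$ are true twins and $e_a-e_b$ is a $(-1)$-eigenvector of $G$. My strategy is to show that, after removing this "petal", the remainder $G\setminus\{a,b,c\}$ is forced to be the perfect matching $(n-1)K_2$, from which $G\cong F_n$ follows by a short count. The base case $n=1$ is $F_1=K_3$ and is immediate, and by Theorem \ref{Theorem1} I may assume $G$ connected (a $K_2$-component cannot meet the triangle, and the connected conclusion rules such components out).

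Set $H:=G\setminus\{a,b\}$, a graph on $2n-1$ vertices containing $c$. Applying Schwenk's deletion formula at $a$ (whose only cycle is $abc$) together with the pendant-vertex formula for $b$ in $G\setminus\{a\}$ (Theorem 2.2.1 of \cite{a11}), I would obtain
$$P_G(x)=(x+1)\big[(x-1)P_H(x)-2P_{H\setminus\{c\}}(x)\big].$$
Since $G$ is cospectral with $F_n$, the proposition computing $\mathrm{Spec}(F_n)$ gives $P_G(x)=(x+1)(x^2-1)^{n-1}(x^2-x-2n)$, and cancelling $x+1$ yields the identity $(x-1)P_H(x)-2P_{H\setminus\{c\}}(x)=(x^2-1)^{n-1}(x^2-x-2n)$. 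A direct computation shows that if $P_{H\setminus\{c\}}(x)=(x^2-1)^{n-1}$, then $P_H(x)=(x+1)(x^2-1)^{n-2}(x^2-x-2(n-1))=P_{F_{n-1}}(x)$. Since a graph cospectral with $(n-1)K_2$ must equal it (its adjacency matrix squares to $I$), the whole problem reduces to the single structural claim that $H\setminus\{c\}=G\setminus\{a,b,c\}\cong (n-1)K_2$, equivalently $P_{H\setminus\{c\}}(x)=(x^2-1)^{n-1}$.

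This claim is the main obstacle, and I do not expect interlacing plus the displayed identity to settle it alone: Cauchy interlacing for $G\supset H\supset H\setminus\{c\}$ pins the multiplicities of $\pm 1$ only up to a bounded defect, and one checks there are a priori consistent patterns (with too many eigenvalues equal to $1$) not excluded by the identity. To force the matching structure I would exploit global spectral moments specialized to the petal. Writing $M:=G\setminus\{a,b,c\}$, the counts $|E(M)|=3n-1-\deg_G(c)$ and the degree-square identity $\sum_i\deg_G(v_i)^2=4n^2+8n-4\,N_G(C_4)$ (from $\mathrm{tr}(A(G)^4)=\mathrm{tr}(A(F_n)^4)$), together with the observation that no $C_4$ or $C_5$ can pass through the degree-$2$ vertices $a,b$ (so these subgraphs are carried entirely by $H$, and by Lemma \ref{4} the closed-$5$-walk data transfers to $H$), are the levers: the aim is to force $\deg_G(c)=2n$ and $\lambda_{\max}(M)\le 1$, the latter being equivalent to $M$ being a disjoint union of $K_2$'s and isolated vertices. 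I would also use Lemma \ref{2} and the absence of eigenvalue $0$ (no false twins) to control the low-degree vertices. Pushing these estimates to actually rule out every surviving triangle and path $P_3$ in $M$ is the delicate point.

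Granting the claim, the finish is a one-line count. Since $M\cong(n-1)K_2$ has $n-1$ edges, $|E(M)|=3n-1-\deg_G(c)=n-1$ forces $\deg_G(c)=2n$; as $c$ is adjacent to $a,b$ and to exactly $2n-2$ further vertices, it is adjacent to every vertex of $M$, so each edge of $M$ together with $c$ is a triangle, and with the petal $\{a,b\}$ these are $n$ triangles sharing only $c$ — that is, $G\cong F_n$. Equivalently, the claim gives $N_G(C_4)=N_H(C_4)=0$, and since $\lambda_1(G)^2-\lambda_1(G)=2n=|V(G)|-1$, Theorem \ref{Niki} applies to conclude that every two vertices of $G$ have exactly one common neighbour, whence $G$ is a friendship graph. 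The hard content throughout remains the perfect-matching claim of the previous paragraph.
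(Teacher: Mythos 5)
Your algebraic setup is sound: the Schwenk-plus-pendant computation does give $P_G(x)=(x+1)\bigl[(x-1)P_H(x)-2P_{H\setminus\{c\}}(x)\bigr]$ (since $b$ is pendant in $G\setminus\{a\}$ and isolated in $G\setminus\{a,c\}$, and the only cycle through $a$ is the triangle $abc$), and your closing count from $M:=G\setminus\{a,b,c\}\cong(n-1)K_2$ to $G\cong F_n$ is correct. But the proposal is not a proof, and you say so yourself: the claim that $M$ is a perfect matching is left as ``the main obstacle'' and ``the delicate point.'' Worse, this claim is not a genuine reduction but an equivalence --- your own last paragraph derives $G\cong F_n$ from it in two lines, and the converse is trivial --- so all the difficulty of the lemma is concentrated in exactly the step you do not carry out. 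The polynomial identity cannot help: it is one equation in the two unknown polynomials $P_H$ and $P_{H\setminus\{c\}}$, so it determines neither, and the levers you list (the trace identity $\sum_i\deg_G(v_i)^2=4n^2+8n-4N_G(C_4)$, the absence of $C_4$, $C_5$ through $a,b$, Lemma \ref{2}) are global moment constraints that nowhere exclude, say, a triangle lying inside $M$ or a vertex of $M$ of degree $3$ in $M$. What you have is a correct reformulation plus an unexecuted plan.

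The gap is all the more visible when set against the paper's proof, which settles precisely this point by interlacing in its forbidden-induced-subgraph form (Proposition \ref{Prop2,3}): no induced subgraph of $G$ may have $\lambda_2>1$ or two eigenvalues below $-1$. With the petal $\{x,y,z\}$ fixed ($z$ playing the role of your $c$), any other triangle of $G$ either avoids the petal and has no edge to it (induced $2K_3$, $\lambda_2=2$), or attaches to $z$ by one, two or three edges (the graphs $A_2,A_3,A_4$ of Figure \ref{pic2}, all with $\lambda_2>1$; edges to $x,y$ are impossible since they have degree $2$) --- hence every triangle passes through $z$. A second round of exclusions ($B_1,B_2$ of Figure \ref{pic3}) shows no two triangles through $z$ are joined by further edges, and then the counts ($n$ triangles, $2n+1$ vertices, $3n$ edges) force $G\cong F_n$. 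Note that the very first of these exclusions is exactly what your plan is missing: a triangle inside $M$ is instantly killed as an induced $2K_3$ or as one of $A_2,A_3,A_4$. So the tool you explicitly set aside as insufficient (``I do not expect interlacing \ldots to settle it'') is, when applied to induced subgraphs rather than to multiplicity bookkeeping, the one that finishes the argument --- and once it is invoked, the petal-peeling and characteristic-polynomial machinery become unnecessary.
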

\begin{proof}
Suppose the $K_3$ that is mentioned in the above Lemma has vertices $\{x,y,z\}$, where $d_G(x)=d_G(y)=2$, and denote it by $T_{x,y}$. We prove that, an arbitrary triangle of $G$ must share a common vertex with $T_{x,y}$ at vertex $z$. Let $\{u,v,w\}$ be the vertices of an arbitrary triangle in $G$. At least one vertex of this triangle is joined to the vertex $z$, since $G$ is $2K_3$-free. Therefore, the all cases that can happen are shown in Figure \ref{pic2}. But the graph $G$ is $\{A_2,A_3,A_4\}$-free, since $\lambda_2(A_2)=1.73205$, $\lambda_2(A_3)=1.50694$ and $\lambda_2(A_4)=1.33988$. Now, we prove that, there is not any edge between other $n-1$ triangles in $G$. Suppose there are two triangles in $G$ with some edges between them. Since these two triangles have a common vertex in $z$ and all triangles in $G$ also must have, the all possible cases showed in Figure \ref{pic3}. But $G$ is $\{B_1,B_2\}$-free, since $\lambda_2(B_1)=1.19799$ and $\lambda_2(B_2)=1.28917$. This is contradiction and so, there are not any edges between other $n-1$ triangles in $G$. So, $G$ is isomorphic to $F_n$.
\begin{figure}[htb]
\centering
\begin{tikzpicture}[scale=0.7]
\filldraw [black]
(-1,-0.5) circle (3.5 pt)
(-1,0.5) circle (3.5 pt)
(0,0) circle (3.5 pt)
(1,0.5) circle (3.5 pt)
(1,-0.5) circle (3.5 pt);
\node [label=below:$A_1$] (A_1) at (0,-0.75) {};
\node [label=left:$y$] (y) at (-1,-0.5) {};
\node [label=left:$x$] (x) at (-1,0.5) {};
\node [label=left:$z$] (z) at (0,0) {};
\node [label=right:$u$] (u) at (0,0) {};
\node [label=right:$v$] (v) at (1,0.5) {};
\node [label=right:$w$] (w) at (1,-0.5) {};
\draw[thick] (-1,-0.5) -- (-1,0.5);
\draw[thick] (-1,0.5) -- (0,0);
\draw[thick] (-1,-0.5) -- (0,0);
\draw[thick] (0,0) -- (1,0.5);
\draw[thick] (0,0) -- (1,-0.5);
\draw[thick] (1,0.5) -- (1,-0.5);
\end{tikzpicture}
\begin{tikzpicture}[scale=0.7]
\filldraw [black]
(-1.5,-0.5) circle (3.5 pt)
(-1.5,0.5) circle (3.5 pt)
(-0.5,0) circle (3.5 pt)
(0.5,0) circle (3.5 pt)
(1.5,0.5) circle (3.5 pt)
(1.5,-0.5) circle (3.5 pt);
\node [label=below:$A_2$] (A_2) at (0,-0.75) {};
\node [label=left:$y$] (y) at (-1.5,-0.5) {};
\node [label=left:$x$] (x) at (-1.5,0.5) {};
\node [label=left:$z$] (z) at (-0.5,0) {};
\node [label=right:$u$] (u) at (0.5,0) {};
\node [label=right:$v$] (v) at (1.5,0.5) {};
\node [label=right:$w$] (w) at (1.5,-0.5) {};
\draw[thick] (-1.5,-0.5) -- (-1.5,0.5);
\draw[thick] (-1.5,-0.5) -- (-0.5,0);
\draw[thick] (-1.5,0.5) -- (-0.5,0);
\draw[thick] (-0.5,0) -- (0.5,0);
\draw[thick] (0.5,0) -- (1.5,0.5);
\draw[thick] (0.5,0) -- (1.5,-0.5);
\draw[thick] (1.5,0.5) -- (1.5,-0.5);
\end{tikzpicture}
\begin{tikzpicture}[scale=0.7]
\filldraw [black]
(-1.5,-0.5) circle (3.5 pt)
(-1.5,0.5) circle (3.5 pt)
(-0.5,0) circle (3.5 pt)
(0.5,0) circle (3.5 pt)
(1.5,0.5) circle (3.5 pt)
(1.5,-0.5) circle (3.5 pt);
\node [label=below:$A_3$] (A_3) at (0,-0.75) {};
\node [label=left:$y$] (y) at (-1.5,-0.5) {};
\node [label=left:$x$] (x) at (-1.5,0.5) {};
\node [label=left:$z$] (z) at (-0.5,0) {};
\node [label=right:$u$] (u) at (0.5,0) {};
\node [label=right:$v$] (v) at (1.5,0.5) {};
\node [label=right:$w$] (w) at (1.5,-0.5) {};
\draw[thick] (-1.5,-0.5) -- (-1.5,0.5);
\draw[thick] (-1.5,-0.5) -- (-0.5,0);
\draw[thick] (-1.5,0.5) -- (-0.5,0);
\draw[thick] (-0.5,0) -- (0.5,0);
\draw[thick] (0.5,0) -- (1.5,0.5);
\draw[thick] (0.5,0) -- (1.5,-0.5);
\draw[thick] (1.5,0.5) -- (1.5,-0.5);
\draw[thick] (1.5,0.5) -- (-0.5,0);
\end{tikzpicture}
\begin{tikzpicture}[scale=0.7]
\filldraw [black]
(-1.5,-0.5) circle (3.5 pt)
(-1.5,0.5) circle (3.5 pt)
(-0.5,0) circle (3.5 pt)
(0.5,0) circle (3.5 pt)
(1.5,0.5) circle (3.5 pt)
(1.5,-0.5) circle (3.5 pt);
\node [label=below:$A_4$] (A_4) at (0,-0.75) {};
\node [label=left:$y$] (y) at (-1.5,-0.5) {};
\node [label=left:$x$] (x) at (-1.5,0.5) {};
\node [label=left:$z$] (z) at (-0.5,0) {};
\node [label=right:$u$] (u) at (0.5,0) {};
\node [label=right:$v$] (v) at (1.5,0.5) {};
\node [label=right:$w$] (w) at (1.5,-0.5) {};
\draw[thick] (-1.5,-0.5) -- (-1.5,0.5);
\draw[thick] (-1.5,-0.5) -- (-0.5,0);
\draw[thick] (-1.5,0.5) -- (-0.5,0);
\draw[thick] (-0.5,0) -- (0.5,0);
\draw[thick] (0.5,0) -- (1.5,0.5);
\draw[thick] (0.5,0) -- (1.5,-0.5);
\draw[thick] (1.5,0.5) -- (1.5,-0.5);
\draw[thick] (1.5,0.5) -- (-0.5,0);
\draw[thick] (1.5,-0.5) -- (-0.5,0);
\end{tikzpicture}
\caption{All possible cases between $T_{x,y}$ and another triangle in $G$}\label{pic2}
\end{figure}
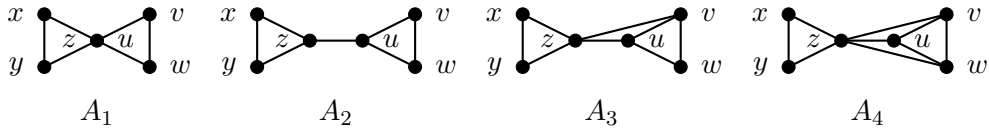
\begin{figure}[htb]
\centering
\begin{tikzpicture}[scale=0.7]
\filldraw [black]
(-1,-0.5) circle (3.5 pt)
(-1,0.5) circle (3.5 pt)
(0,0) circle (3.5 pt)
(1,0.5) circle (3.5 pt)
(1,-0.5) circle (3.5 pt)
(-0.5,1) circle (3.5 pt)
(0.5,1) circle (3.5 pt);
\node [label=below:$B_1$] (B_1) at (0,-0.75) {};
\node [label=left:$x$] (x) at (-1,0.5) {};
\node [label=left:$y$] (y) at (-1,-0.5) {};
\node [label=left:$z$] (z) at (0,0) {};
\draw[thick] (-1,-0.5) -- (-1,0.5);
\draw[thick] (-1,0.5) -- (0,0);
\draw[thick] (-1,-0.5) -- (0,0);
\draw[thick] (0,0) -- (1,0.5);
\draw[thick] (0,0) -- (1,-0.5);
\draw[thick] (1,0.5) -- (1,-0.5);
\draw[thick] (-0.5,1) -- (0.5,1);
\draw[thick] (-0.5,1) -- (0,0);
\draw[thick] (0.5,1) -- (0,0);
\draw[thick] (0.5,1) -- (1,0.5);
\end{tikzpicture}\qquad
\begin{tikzpicture}[scale=0.7]
\filldraw [black]
(-1,-0.5) circle (3.5 pt)
(-1,0.5) circle (3.5 pt)
(0,0) circle (3.5 pt)
(1,0.5) circle (3.5 pt)
(1,-0.5) circle (3.5 pt)
(-0.5,1) circle (3.5 pt)
(0.5,1) circle (3.5 pt);
\node [label=below:$B_2$] (B_2) at (0,-0.75) {};
\node [label=left:$x$] (x) at (-1,0.5) {};
\node [label=left:$y$] (y) at (-1,-0.5) {};
\node [label=left:$z$] (z) at (0,0) {};
\draw[thick] (-1,-0.5) -- (-1,0.5);
\draw[thick] (-1,0.5) -- (0,0);
\draw[thick] (-1,-0.5) -- (0,0);
\draw[thick] (0,0) -- (1,0.5);
\draw[thick] (0,0) -- (1,-0.5);
\draw[thick] (1,0.5) -- (1,-0.5);
\draw[thick] (-0.5,1) -- (0.5,1);
\draw[thick] (-0.5,1) -- (0,0);
\draw[thick] (0.5,1) -- (0,0);
\draw[thick] (-0.5,1) -- (1,-0.5);
\draw[thick] (-0.5,1) -- (1,-0.5);
\draw[thick] (0.5,1) -- (1,0.5);
\end{tikzpicture}
\caption{All possible cases between $T_{x,y}$ and two other triangles in $G$}\label{pic3}
\end{figure}
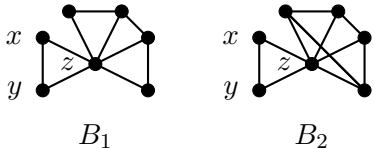
\end{proof}

\begin{thm}
Let the graph $G$ is cospectral with $F_n$ and $G$ has two adjacent vertices of degree $2$. Then $G$ is isomorphic to $F_n$.
\end{thm}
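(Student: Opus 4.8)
The plan is to reduce everything to Lemma~\ref{Tx}: by that lemma it suffices to exhibit in $G$ a triangle two of whose vertices have degree $2$ in $G$. So let $u,v$ be the two adjacent vertices of degree $2$, let $x$ be the neighbour of $u$ other than $v$, and let $y$ be the neighbour of $v$ other than $u$. If $x=y$, then $\{u,v,x\}$ spans a triangle in which both $u$ and $v$ have degree $2$, and Lemma~\ref{Tx} immediately yields $G\cong F_n$; this also disposes of $n=1$, where $F_1=K_3$ forces $x=y$. Hence the whole content of the statement is that the configuration $x\neq y$ is impossible, and I would spend the rest of the argument ruling it out, assuming from now on that $n\ge2$ and $x\neq y$.

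Under this assumption $u$ and $v$ have no common neighbour, so neither lies on any triangle (a triangle through $u$ would be $\{u,v,x\}$ with $v\sim x$, impossible since the neighbours of $v$ are $u,y$ and $x\neq y$). Thus the four distinct vertices $x,u,v,y$ span either an induced $P_4$ (if $x\not\sim y$) or an induced $C_4$ (if $x\sim y$), and all $n$ triangles of $G$ avoid $\{u,v\}$. The route I would take is the constant-term/interlacing rigidity used already in Lemma~\ref{2}. Deleting $u,v$ gives the induced subgraph $G-u-v$ on $2n-1$ vertices with eigenvalues $\mu_1\ge\cdots\ge\mu_{2n-1}$; Cauchy interlacing against the spectrum of $F_n$ computed above gives $\lambda_i\ge\mu_i\ge\lambda_{i+2}$, whence $|\mu_1|\le\lambda_1$, $|\mu_{2n-1}|\le|\lambda_{2n+1}|$ and $|\mu_j|\le1$ for $2\le j\le 2n-2$. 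Since $\lambda_1|\lambda_{2n+1}|=2n$, this forces $|\det A(G-u-v)|=\prod_{j}|\mu_j|\le 2n$, with equality only if $\mu_1=\lambda_1$. But $G-u-v$ is a proper subgraph of the connected graph $G$, so Lemma~\ref{1} gives $\mu_1<\lambda_1$; therefore it suffices to prove the reverse estimate $|\det A(G-u-v)|\ge 2n$, which then produces the contradiction. To get at $\det A(G-u-v)$ I would apply the recursion of Theorem~2.2.1 of \cite{a11} (equivalently Sachs' coefficient theorem) to $P_G$ at $x=0$. Because $\deg u=\deg v=2$, every spanning elementary subgraph of $G$ meets $\{u,v\}$ in exactly one of three ways — the matching edge $uv$, the disjoint pair $\{ux,vy\}$, or a cycle running along $x-u-v-y$ — so that
$$P_G(0)=-P_{G-u-v}(0)+P_{G-\{u,v,x,y\}}(0)-2\sum_{Z\supseteq\{x,u,v,y\}}P_{G-V(Z)}(0),$$
where $P_G(0)=(-1)^n2n$. (One checks this identity reproduces $P_{F_n}(0)=(-1)^n2n$ correctly in the genuine case $x=y$.)

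The main obstacle is precisely the control of the lower-order terms $P_{G-\{u,v,x,y\}}(0)$ and the cycle sum: a naive triangle inequality only bounds them, not $P_{G-u-v}(0)$ from below. I would therefore split according to whether $G-u-v$ keeps $x$ and $y$ in one component. If it does not, then no cycle passes through $x-u-v-y$, the cycle sum vanishes, and the displayed identity collapses to a two-term relation between determinants on $2n-1$ and $2n-3$ vertices, to which a second round of interlacing on $G-\{u,v,x,y\}$ can be applied. If it does, then a shortest $x$–$y$ path in $G-u-v$ of length $\ell$ creates a cycle of length $\ell+3$ in $G$, so $\ell=1$ gives a $C_4$ and $\ell=2$ a $C_5$, which I would feed into Nikiforov's bound (Theorem~\ref{Niki}) and the length-five walk count (Lemma~\ref{4}) to force the cycle contribution to be too small to spoil the equality $|\det A(G-u-v)|=2n$. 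I expect this case analysis, rather than the reduction itself, to be the delicate part: a purely forbidden-subgraph attack cannot finish, since the relevant five-vertex configuration (a triangle with a length-two pendant path) has $\lambda_2=1$ exactly and so is \emph{not} excluded by Proposition~\ref{Prop2,3}. Once $x\neq y$ is eliminated, $u$ and $v$ share a neighbour and Lemma~\ref{Tx} completes the proof.
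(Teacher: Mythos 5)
Your opening reduction is exactly the paper's: if the two adjacent degree-$2$ vertices share a neighbour, Lemma~\ref{Tx} finishes, so everything rests on excluding the configuration $x\neq y$. Your determinant identity is correct (it follows from Schwenk's vertex formula at $u$ together with the pendant-vertex recursion applied to $v$ inside $G-u-x$), and so is the upper bound $\prod_j|\mu_j|\le 2n$ with equality forcing $\mu_1=\lambda_1$ (modulo your unstated assumption that $G$ is connected, which Theorem~\ref{Theorem1} does not guarantee but which is repairable). The genuine gap is that the lower bound $|P_{G-u-v}(0)|\ge 2n$ --- the entire content of the intended contradiction --- is never proved, and neither of your sketched completions can close it as stated. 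In the sub-case where $x,y$ lie in different components of $G-u-v$, interlacing in fact pins $\mu_j=1$ for $2\le j\le n-2$ and $\mu_j=-1$ for $n+1\le j\le 2n-2$, leaving only $\mu_1,\mu_{n-1},\mu_n,\mu_{2n-1}$ free; writing the signs out, your two-term identity becomes
$$P_{G-\{u,v,x,y\}}(0)=(-1)^n\left(2n+\mu_1\,|\mu_{2n-1}|\,\mu_{n-1}\mu_n\right),$$
and the ``second round of interlacing'' (which gives $|P_{G-\{u,v,x,y\}}(0)|\le 2n$) produces a contradiction only when $\mu_{n-1}\mu_n\ge 0$. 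When $\mu_{n-1}\mu_n<0$, every bound you invoke is simultaneously satisfiable; controlling that sign amounts to controlling the inertia of $G-u-v$, which interlacing cannot do and nothing else in your argument does. In the other sub-case, the appeal to Theorem~\ref{Niki} is inapplicable in principle: Nikiforov's inequality hypothesizes that $G$ has \emph{no} $4$-cycles, so it says nothing precisely when your configuration creates a $C_4$. Moreover, no mechanism is given by which the walk count of Lemma~\ref{4} bounds the cycle sum $\sum_Z P_{G-V(Z)}(0)$ --- neither the number of cycles through $x$--$u$--$v$--$y$ nor the magnitude of each term (which interlacing limits only by roughly $2n$) is controlled --- and the sub-cases $\ell\ge 3$, e.g.\ an induced $C_6$, which has $\lambda_2=1$ and is therefore not excluded by Proposition~\ref{Prop2,3}, are not addressed at all.

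For comparison, the paper closes the $x\neq y$ case with no determinant rigidity, by a finite forbidden-subgraph check: since $G$ contains $n\ge 1$ triangles and $\lambda_2(C_4\cup K_3)=2>1$, $\lambda_2(P_4\cup K_3)=\frac{1+\sqrt{5}}{2}>1$, Proposition~\ref{Prop2,3} forces every triangle of $G$ to attach to the four vertices $\{a,x,y,b\}$, and each of the finitely many possible attachments (the graphs $C_1,\dots,C_{26}$ and $D_1,\dots,D_{20}$ of Figures~\ref{pic5} and~\ref{pic6}) is itself excluded by Proposition~\ref{Prop2,3}, having either second eigenvalue greater than $1$ or two eigenvalues below $-1$. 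That enumeration is exactly the step your proposal would have to replace, and magnitude bounds on determinants alone are demonstrably too weak to replace it.
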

\begin{proof}
Suppose $\{x,y\}$ are two adjacent vertices of degree $2$ in $G$. If these two vertices be adjacent to a vertex $z$ in $G$, then we have a triangle in $G$ with vertices $\{x,y,z\}$. So, by Lemma \ref{Tx}, the result is clear. We show that, this is the only possible case. Now by contrary, suppose the vertices $x$ and $y$ are adjacent to vertices $a$ and $b$, respectively. So, we have a $P_4$ with vertices $\{a,x,y,b\}$ as a subgraph of $G$. Therefore, at least one of the two cases in Figure \ref{pic4} must be happen. First, we examine the graph $C$ of Figure \ref{pic4}. For an arbitrary $K_3$ (or triangle) in $G$, we have $\lambda_2(C\cup K_3)=2$ and so, the all possible cases that $C$ and $K_3$ can construct, are shown in Figure \ref{pic5}. Except the graphs $C_1$ and $C_5$ that have two eigenvalues less than $-1$, for all other graphs, $C_i (i=2,\ldots,26)(i\neq 5)$, we have $\lambda_2(C_i)>1$. Therefore the case $C$ can not happen in $G$.

Now, we examine the graph $D$ of Figure \ref{pic4}. In this case, the graph $D$ is an induced $P_4$ in $G$. For an arbitrary $K_3$ (or triangle) in $G$, we have $\lambda_2(D\cup K_3)=1.61803$ and so, the all possible cases that $D$ and $K_3$ can construct, are shown in Figure \ref{pic6}. Except the graphs $D_3$ that has two eigenvalues less than $-1$, for all other graphs, $D_i (i=1,\ldots,20)(i\neq 3)$, we have $\lambda_2(D_i)>1$. Therefore, the case $D$ can not happen in $G$.\\
These results shows that, if there are two adjacent vertices of degree $2$ in the graph $G$, then they are adjacent to a common vertex in $G$, and this fact complete the proof.
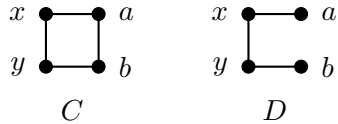
\begin{figure}[htb]
\centering
\begin{tikzpicture}[scale=0.7]
\filldraw [black]
(-0.5,-0.5) circle (3.5 pt)
(-0.5,0.5) circle (3.5 pt)
(0.5,0.5) circle (3.5 pt)
(0.5,-0.5) circle (3.5 pt);
\node [label=below:$C$] (C) at (0,-0.75) {};
\node [label=left:$x$] (x) at (-0.5,0.5) {};
\node [label=left:$y$] (y) at (-0.5,-0.5) {};
\node [label=right:$a$] (a) at (0.5,0.5) {};
\node [label=right:$b$] (b) at (0.5,-0.5) {};
\draw[thick] (-0.5,-0.5) -- (-0.5,0.5);
\draw[thick] (-0.5,0.5) -- (0.5,0.5);
\draw[thick] (-0.5,-0.5) -- (0.5,-0.5);
\draw[thick] (0.5,0.5) -- (0.5,-0.5);
\end{tikzpicture}\qquad
\begin{tikzpicture}[scale=0.7]
\filldraw [black]
(-0.5,-0.5) circle (3.5 pt)
(-0.5,0.5) circle (3.5 pt)
(0.5,0.5) circle (3.5 pt)
(0.5,-0.5) circle (3.5 pt);
\node [label=below:$D$] (D) at (0,-0.75) {};
\node [label=left:$x$] (x) at (-0.5,0.5) {};
\node [label=left:$y$] (y) at (-0.5,-0.5) {};
\node [label=right:$a$] (a) at (0.5,0.5) {};
\node [label=right:$b$] (b) at (0.5,-0.5) {};
\draw[thick] (-0.5,-0.5) -- (-0.5,0.5);
\draw[thick] (-0.5,0.5) -- (0.5,0.5);
\draw[thick] (-0.5,-0.5) -- (0.5,-0.5);
\end{tikzpicture}
\caption{All possible cases for $P_4$ with vertices $\{a,x,y,b\}$ in $G$}\label{pic4}
\end{figure}

\begin{figure}[htb]
\centering
\begin{tikzpicture}[scale=0.7]
\filldraw [black]
(-0.5,-0.5) circle (3.5 pt)
(-0.5,0.5) circle (3.5 pt)
(0.5,0.5) circle (3.5 pt)
(0.5,-0.5) circle (3.5 pt)
(1,0) circle (3.5 pt);
\node [label=below:$C_1$] (C_1) at (0.25,-1.25) {};
\node [label=left:$x$] (x) at (-0.5,0.5) {};
\node [label=left:$y$] (y) at (-0.5,-0.5) {};
\node [label=above:$a$] (a) at (0.5,0.5) {};
\node [label=below:$b$] (b) at (0.5,-0.5) {};
\draw[thick] (-0.5,-0.5) -- (-0.5,0.5);
\draw[thick] (-0.5,0.5) -- (0.5,0.5);
\draw[thick] (-0.5,-0.5) -- (0.5,-0.5);
\draw[thick] (0.5,0.5) -- (0.5,-0.5);
\draw[thick] (0.5,0.5) -- (1,0);
\draw[thick] (0.5,-0.5) -- (1,0);
\end{tikzpicture}
\begin{tikzpicture}[scale=0.7]
\filldraw [black]
(-0.5,-0.5) circle (3.5 pt)
(-0.5,0.5) circle (3.5 pt)
(0.5,0.5) circle (3.5 pt)
(0.5,-0.5) circle (3.5 pt)
(1.5,-0.5) circle (3.5 pt)
(1.5,0.5) circle (3.5 pt);
\node [label=below:$C_2$] (C_2) at (0.5,-1.25) {};
\node [label=left:$x$] (x) at (-0.5,0.5) {};
\node [label=left:$y$] (y) at (-0.5,-0.5) {};
\node [label=above:$a$] (a) at (0.5,0.5) {};
\node [label=below:$b$] (b) at (0.5,-0.5) {};
\draw[thick] (-0.5,-0.5) -- (-0.5,0.5);
\draw[thick] (-0.5,0.5) -- (0.5,0.5);
\draw[thick] (-0.5,-0.5) -- (0.5,-0.5);
\draw[thick] (0.5,0.5) -- (0.5,-0.5);
\draw[thick] (0.5,0.5) -- (1.5,0.5);
\draw[thick] (0.5,0.5) -- (1.5,-0.5);
\draw[thick] (1.5,0.5) -- (1.5,-0.5);
\end{tikzpicture}
\begin{tikzpicture}[scale=0.7]
\filldraw [black]
(-0.5,-0.5) circle (3.5 pt)
(-0.5,0.5) circle (3.5 pt)
(0.5,0.5) circle (3.5 pt)
(0.5,-0.5) circle (3.5 pt)
(1.5,-0.5) circle (3.5 pt)
(1.5,0.5) circle (3.5 pt);
\node [label=below:$C_3$] (C_3) at (0.5,-1.25) {};
\node [label=left:$x$] (x) at (-0.5,0.5) {};
\node [label=left:$y$] (y) at (-0.5,-0.5) {};
\node [label=above:$a$] (a) at (0.5,0.5) {};
\node [label=below:$b$] (b) at (0.5,-0.5) {};
\draw[thick] (-0.5,-0.5) -- (-0.5,0.5);
\draw[thick] (-0.5,0.5) -- (0.5,0.5);
\draw[thick] (-0.5,-0.5) -- (0.5,-0.5);
\draw[thick] (0.5,0.5) -- (0.5,-0.5);
\draw[thick] (0.5,0.5) -- (1.5,0.5);
\draw[thick] (0.5,0.5) -- (1.5,-0.5);
\draw[thick] (1.5,0.5) -- (1.5,-0.5);
\draw[thick] (0.5,-0.5) -- (1.5,-0.5);
\end{tikzpicture}
\begin{tikzpicture}[scale=0.7]
\filldraw [black]
(-0.5,-0.5) circle (3.5 pt)
(-0.5,0.5) circle (3.5 pt)
(0.5,0.5) circle (3.5 pt)
(0.5,-0.5) circle (3.5 pt)
(1.5,-0.5) circle (3.5 pt)
(1.5,0.5) circle (3.5 pt);
\node [label=below:$C_4$] (C_4) at (0.5,-1.25) {};
\node [label=left:$x$] (x) at (-0.5,0.5) {};
\node [label=left:$y$] (y) at (-0.5,-0.5) {};
\node [label=above:$a$] (a) at (0.5,0.5) {};
\node [label=below:$b$] (b) at (0.5,-0.5) {};
\draw[thick] (-0.5,-0.5) -- (-0.5,0.5);
\draw[thick] (-0.5,0.5) -- (0.5,0.5);
\draw[thick] (-0.5,-0.5) -- (0.5,-0.5);
\draw[thick] (0.5,0.5) -- (0.5,-0.5);
\draw[thick] (0.5,0.5) -- (1.5,0.5);
\draw[thick] (0.5,0.5) -- (1.5,-0.5);
\draw[thick] (1.5,0.5) -- (1.5,-0.5);
\draw[thick] (0.5,-0.5) -- (1.5,0.5);
\end{tikzpicture}
\begin{tikzpicture}[scale=0.7]
\filldraw [black]
(-0.5,-0.5) circle (3.5 pt)
(-0.5,0.5) circle (3.5 pt)
(0.5,0.5) circle (3.5 pt)
(0.5,-0.5) circle (3.5 pt)
(1.5,-0.5) circle (3.5 pt)
(1.5,0.5) circle (3.5 pt);
\node [label=below:$C_5$] (C_5) at (0.5,-1.25) {};
\node [label=left:$x$] (x) at (-0.5,0.5) {};
\node [label=left:$y$] (y) at (-0.5,-0.5) {};
\node [label=above:$a$] (a) at (0.5,0.5) {};
\node [label=below:$b$] (b) at (0.5,-0.5) {};
\draw[thick] (-0.5,-0.5) -- (-0.5,0.5);
\draw[thick] (-0.5,0.5) -- (0.5,0.5);
\draw[thick] (-0.5,-0.5) -- (0.5,-0.5);
\draw[thick] (0.5,0.5) -- (0.5,-0.5);
\draw[thick] (0.5,0.5) -- (1.5,0.5);
\draw[thick] (0.5,0.5) -- (1.5,-0.5);
\draw[thick] (1.5,0.5) -- (1.5,-0.5);
\draw[thick] (0.5,-0.5) -- (1.5,0.5);
\draw[thick] (0.5,-0.5) -- (1.5,-0.5);
\end{tikzpicture}
\begin{tikzpicture}[scale=0.7]
\filldraw [black]
(-0.5,-0.5) circle (3.5 pt)
(-0.5,0.5) circle (3.5 pt)
(0.5,0.5) circle (3.5 pt)
(0.5,-0.5) circle (3.5 pt)
(1.5,-0.5) circle (3.5 pt)
(1.5,0.5) circle (3.5 pt)
(2.25,0) circle (3.5 pt);
\node [label=below:$C_6$] (C_6) at (0.75,-1.25) {};
\node [label=left:$x$] (x) at (-0.5,0.5) {};
\node [label=left:$y$] (y) at (-0.5,-0.5) {};
\node [label=above:$a$] (a) at (0.5,0.5) {};
\node [label=below:$b$] (b) at (0.5,-0.5) {};
\draw[thick] (-0.5,-0.5) -- (-0.5,0.5);
\draw[thick] (-0.5,0.5) -- (0.5,0.5);
\draw[thick] (-0.5,-0.5) -- (0.5,-0.5);
\draw[thick] (0.5,0.5) -- (0.5,-0.5);
\draw[thick] (0.5,0.5) -- (1.5,0.5);
\draw[thick] (1.5,0.5) -- (1.5,-0.5);
\draw[thick] (1.5,0.5) -- (2.25,0);
\draw[thick] (1.5,-0.5) -- (2.25,0);
\end{tikzpicture}\\
\begin{tikzpicture}[scale=0.7]
\filldraw [black]
(-0.5,-0.5) circle (3.5 pt)
(-0.5,0.5) circle (3.5 pt)
(0.5,0.5) circle (3.5 pt)
(0.5,-0.5) circle (3.5 pt)
(1.5,-0.5) circle (3.5 pt)
(1.5,0.5) circle (3.5 pt)
(2.25,0) circle (3.5 pt);
\node [label=below:$C_7$] (C_7) at (0.75,-1.25) {};
\node [label=left:$x$] (x) at (-0.5,0.5) {};
\node [label=left:$y$] (y) at (-0.5,-0.5) {};
\node [label=above:$a$] (a) at (0.5,0.5) {};
\node [label=below:$b$] (b) at (0.5,-0.5) {};
\draw[thick] (-0.5,-0.5) -- (-0.5,0.5);
\draw[thick] (-0.5,0.5) -- (0.5,0.5);
\draw[thick] (-0.5,-0.5) -- (0.5,-0.5);
\draw[thick] (0.5,0.5) -- (0.5,-0.5);
\draw[thick] (0.5,0.5) -- (1.5,0.5);
\draw[thick] (1.5,0.5) -- (1.5,-0.5);
\draw[thick] (1.5,0.5) -- (2.25,0);
\draw[thick] (1.5,-0.5) -- (2.25,0);
\draw[thick] (1.5,0.5) -- (0.5,-0.5);
\end{tikzpicture}
\begin{tikzpicture}[scale=0.7]
\filldraw [black]
(-0.5,-0.5) circle (3.5 pt)
(-0.5,0.5) circle (3.5 pt)
(0.5,0.5) circle (3.5 pt)
(0.5,-0.5) circle (3.5 pt)
(1.5,-0.5) circle (3.5 pt)
(1.5,0.5) circle (3.5 pt)
(2.25,0) circle (3.5 pt);
\node [label=below:$C_8$] (C_8) at (0.75,-1.25) {};
\node [label=left:$x$] (x) at (-0.5,0.5) {};
\node [label=left:$y$] (y) at (-0.5,-0.5) {};
\node [label=above:$a$] (a) at (0.5,0.5) {};
\node [label=below:$b$] (b) at (0.5,-0.5) {};
\draw[thick] (-0.5,-0.5) -- (-0.5,0.5);
\draw[thick] (-0.5,0.5) -- (0.5,0.5);
\draw[thick] (-0.5,-0.5) -- (0.5,-0.5);
\draw[thick] (0.5,0.5) -- (0.5,-0.5);
\draw[thick] (0.5,0.5) -- (1.5,0.5);
\draw[thick] (1.5,0.5) -- (1.5,-0.5);
\draw[thick] (1.5,0.5) -- (2.25,0);
\draw[thick] (1.5,-0.5) -- (2.25,0);
\draw[thick] (1.5,-0.5) -- (0.5,-0.5);
\end{tikzpicture}
\begin{tikzpicture}[scale=0.7]
\filldraw [black]
(-0.5,-0.5) circle (3.5 pt)
(-0.5,0.5) circle (3.5 pt)
(0.5,0.5) circle (3.5 pt)
(0.5,-0.5) circle (3.5 pt)
(1.5,-0.5) circle (3.5 pt)
(1.5,0.5) circle (3.5 pt)
(2.25,0) circle (3.5 pt);
\node [label=below:$C_9$] (C_9) at (0.75,-1.25) {};
\node [label=left:$x$] (x) at (-0.5,0.5) {};
\node [label=left:$y$] (y) at (-0.5,-0.5) {};
\node [label=above:$a$] (a) at (0.5,0.5) {};
\node [label=below:$b$] (b) at (0.5,-0.5) {};
\draw[thick] (-0.5,-0.5) -- (-0.5,0.5);
\draw[thick] (-0.5,0.5) -- (0.5,0.5);
\draw[thick] (-0.5,-0.5) -- (0.5,-0.5);
\draw[thick] (0.5,0.5) -- (0.5,-0.5);
\draw[thick] (0.5,0.5) -- (1.5,0.5);
\draw[thick] (1.5,0.5) -- (1.5,-0.5);
\draw[thick] (1.5,0.5) -- (2.25,0);
\draw[thick] (1.5,-0.5) -- (2.25,0);
\draw[thick] (1.5,-0.5) -- (0.5,0.5);
\end{tikzpicture}
\begin{tikzpicture}[scale=0.7]
\filldraw [black]
(-0.5,-0.5) circle (3.5 pt)
(-0.5,0.5) circle (3.5 pt)
(0.5,0.5) circle (3.5 pt)
(0.5,-0.5) circle (3.5 pt)
(1.5,-0.5) circle (3.5 pt)
(1.5,0.5) circle (3.5 pt)
(2.25,0) circle (3.5 pt);
\node [label=below:$C_{10}$] (C_{10}) at (0.75,-1.25) {};
\node [label=left:$x$] (x) at (-0.5,0.5) {};
\node [label=left:$y$] (y) at (-0.5,-0.5) {};
\node [label=above:$a$] (a) at (0.5,0.5) {};
\node [label=below:$b$] (b) at (0.5,-0.5) {};
\draw[thick] (-0.5,-0.5) -- (-0.5,0.5);
\draw[thick] (-0.5,0.5) -- (0.5,0.5);
\draw[thick] (-0.5,-0.5) -- (0.5,-0.5);
\draw[thick] (0.5,0.5) -- (0.5,-0.5);
\draw[thick] (0.5,0.5) -- (1.5,0.5);
\draw[thick] (1.5,0.5) -- (1.5,-0.5);
\draw[thick] (1.5,0.5) -- (2.25,0);
\draw[thick] (1.5,-0.5) -- (2.25,0);
\draw[thick] (1.5,0.5) -- (0.5,0.5);
\draw[thick] (1.5,0.5) -- (0.5,-0.5);
\draw[thick] (1.5,-0.5) -- (0.5,0.5);
\end{tikzpicture}
\begin{tikzpicture}[scale=0.7]
\filldraw [black]
(-0.5,-0.5) circle (3.5 pt)
(-0.5,0.5) circle (3.5 pt)
(0.5,0.5) circle (3.5 pt)
(0.5,-0.5) circle (3.5 pt)
(1.5,-0.5) circle (3.5 pt)
(1.5,0.5) circle (3.5 pt)
(2.25,0) circle (3.5 pt);
\node [label=below:$C_{11}$] (C_{11}) at (0.75,-1.25) {};
\node [label=left:$x$] (x) at (-0.5,0.5) {};
\node [label=left:$y$] (y) at (-0.5,-0.5) {};
\node [label=above:$a$] (a) at (0.5,0.5) {};
\node [label=below:$b$] (b) at (0.5,-0.5) {};
\draw[thick] (-0.5,-0.5) -- (-0.5,0.5);
\draw[thick] (-0.5,0.5) -- (0.5,0.5);
\draw[thick] (-0.5,-0.5) -- (0.5,-0.5);
\draw[thick] (0.5,0.5) -- (0.5,-0.5);
\draw[thick] (0.5,0.5) -- (1.5,0.5);
\draw[thick] (1.5,0.5) -- (1.5,-0.5);
\draw[thick] (1.5,0.5) -- (2.25,0);
\draw[thick] (1.5,-0.5) -- (2.25,0);
\draw[thick] (1.5,0.5) -- (0.5,0.5);
\draw[thick] (1.5,0.5) -- (0.5,-0.5);
\draw[thick] (1.5,-0.5) -- (0.5,-0.5);
\end{tikzpicture}\\
\begin{tikzpicture}[scale=0.7]
\filldraw [black]
(-0.5,-0.5) circle (3.5 pt)
(-0.5,0.5) circle (3.5 pt)
(0.5,0.5) circle (3.5 pt)
(0.5,-0.5) circle (3.5 pt)
(1.5,-0.5) circle (3.5 pt)
(1.5,0.5) circle (3.5 pt)
(2.25,0) circle (3.5 pt);
\node [label=below:$C_{12}$] (C_{12}) at (0.75,-1.25) {};
\node [label=left:$x$] (x) at (-0.5,0.5) {};
\node [label=left:$y$] (y) at (-0.5,-0.5) {};
\node [label=above:$a$] (a) at (0.5,0.5) {};
\node [label=below:$b$] (b) at (0.5,-0.5) {};
\draw[thick] (-0.5,-0.5) -- (-0.5,0.5);
\draw[thick] (-0.5,0.5) -- (0.5,0.5);
\draw[thick] (-0.5,-0.5) -- (0.5,-0.5);
\draw[thick] (0.5,0.5) -- (0.5,-0.5);
\draw[thick] (0.5,0.5) -- (1.5,0.5);
\draw[thick] (1.5,0.5) -- (1.5,-0.5);
\draw[thick] (1.5,0.5) -- (2.25,0);
\draw[thick] (1.5,-0.5) -- (2.25,0);
\draw[thick] (1.5,0.5) -- (0.5,0.5);
\draw[thick] (1.5,-0.5) -- (0.5,-0.5);
\draw[thick] (2.25,0) -- (0.5,0.5);
\end{tikzpicture}
\begin{tikzpicture}[scale=0.7]
\filldraw [black]
(-0.5,-0.5) circle (3.5 pt)
(-0.5,0.5) circle (3.5 pt)
(0.5,0.5) circle (3.5 pt)
(0.5,-0.5) circle (3.5 pt)
(1.5,-0.5) circle (3.5 pt)
(1.5,0.5) circle (3.5 pt)
(2.25,0) circle (3.5 pt);
\node [label=below:$C_{13}$] (C_{13}) at (0.75,-1.25) {};
\node [label=left:$x$] (x) at (-0.5,0.5) {};
\node [label=left:$y$] (y) at (-0.5,-0.5) {};
\node [label=above:$a$] (a) at (0.5,0.5) {};
\node [label=below:$b$] (b) at (0.5,-0.5) {};
\draw[thick] (-0.5,-0.5) -- (-0.5,0.5);
\draw[thick] (-0.5,0.5) -- (0.5,0.5);
\draw[thick] (-0.5,-0.5) -- (0.5,-0.5);
\draw[thick] (0.5,0.5) -- (0.5,-0.5);
\draw[thick] (0.5,0.5) -- (1.5,0.5);
\draw[thick] (1.5,0.5) -- (1.5,-0.5);
\draw[thick] (1.5,0.5) -- (2.25,0);
\draw[thick] (1.5,-0.5) -- (2.25,0);
\draw[thick] (1.5,0.5) -- (0.5,0.5);
\draw[thick] (1.5,-0.5) -- (0.5,0.5);
\draw[thick] (2.25,0) -- (0.5,-0.5);
\end{tikzpicture}
\begin{tikzpicture}[scale=0.7]
\filldraw [black]
(-0.5,-0.5) circle (3.5 pt)
(-0.5,0.5) circle (3.5 pt)
(0.5,0.5) circle (3.5 pt)
(0.5,-0.5) circle (3.5 pt)
(1.5,-0.5) circle (3.5 pt)
(1.5,0.5) circle (3.5 pt)
(2.25,0) circle (3.5 pt);
\node [label=below:$C_{14}$] (C_{14}) at (0.75,-1.25) {};
\node [label=left:$x$] (x) at (-0.5,0.5) {};
\node [label=left:$y$] (y) at (-0.5,-0.5) {};
\node [label=above:$a$] (a) at (0.5,0.5) {};
\node [label=below:$b$] (b) at (0.5,-0.5) {};
\draw[thick] (-0.5,-0.5) -- (-0.5,0.5);
\draw[thick] (-0.5,0.5) -- (0.5,0.5);
\draw[thick] (-0.5,-0.5) -- (0.5,-0.5);
\draw[thick] (0.5,0.5) -- (0.5,-0.5);
\draw[thick] (0.5,0.5) -- (1.5,0.5);
\draw[thick] (1.5,0.5) -- (1.5,-0.5);
\draw[thick] (1.5,0.5) -- (2.25,0);
\draw[thick] (1.5,-0.5) -- (2.25,0);
\draw[thick] (1.5,0.5) -- (0.5,0.5);
\draw[thick] (1.5,-0.5) -- (0.5,0.5);
\draw[thick] (2.25,0) -- (0.5,0.5);
\end{tikzpicture}
\begin{tikzpicture}[scale=0.7]
\filldraw [black]
(-0.5,-0.5) circle (3.5 pt)
(-0.5,0.5) circle (3.5 pt)
(0.5,0.5) circle (3.5 pt)
(0.5,-0.5) circle (3.5 pt)
(1.5,-0.5) circle (3.5 pt)
(1.5,0.5) circle (3.5 pt)
(2.25,0) circle (3.5 pt);
\node [label=below:$C_{15}$] (C_{15}) at (0.75,-1.25) {};
\node [label=left:$x$] (x) at (-0.5,0.5) {};
\node [label=left:$y$] (y) at (-0.5,-0.5) {};
\node [label=above:$a$] (a) at (0.5,0.5) {};
\node [label=below:$b$] (b) at (0.5,-0.5) {};
\draw[thick] (-0.5,-0.5) -- (-0.5,0.5);
\draw[thick] (-0.5,0.5) -- (0.5,0.5);
\draw[thick] (-0.5,-0.5) -- (0.5,-0.5);
\draw[thick] (0.5,0.5) -- (0.5,-0.5);
\draw[thick] (0.5,0.5) -- (1.5,0.5);
\draw[thick] (1.5,0.5) -- (1.5,-0.5);
\draw[thick] (1.5,0.5) -- (2.25,0);
\draw[thick] (1.5,-0.5) -- (2.25,0);
\draw[thick] (1.5,0.5) -- (0.5,0.5);
\draw[thick] (1.5,-0.5) -- (0.5,0.5);
\draw[thick] (1.5,0.5) -- (0.5,-0.5);
\draw[thick] (1.5,-0.5) -- (0.5,-0.5);
\end{tikzpicture}
\begin{tikzpicture}[scale=0.7]
\filldraw [black]
(-0.5,-0.5) circle (3.5 pt)
(-0.5,0.5) circle (3.5 pt)
(0.5,0.5) circle (3.5 pt)
(0.5,-0.5) circle (3.5 pt)
(1.5,-0.5) circle (3.5 pt)
(1.5,0.5) circle (3.5 pt)
(2.25,0) circle (3.5 pt);
\node [label=below:$C_{16}$] (C_{16}) at (0.75,-1.25) {};
\node [label=left:$x$] (x) at (-0.5,0.5) {};
\node [label=left:$y$] (y) at (-0.5,-0.5) {};
\node [label=above:$a$] (a) at (0.5,0.5) {};
\node [label=below:$b$] (b) at (0.5,-0.5) {};
\draw[thick] (-0.5,-0.5) -- (-0.5,0.5);
\draw[thick] (-0.5,0.5) -- (0.5,0.5);
\draw[thick] (-0.5,-0.5) -- (0.5,-0.5);
\draw[thick] (0.5,0.5) -- (0.5,-0.5);
\draw[thick] (0.5,0.5) -- (1.5,0.5);
\draw[thick] (1.5,0.5) -- (1.5,-0.5);
\draw[thick] (1.5,0.5) -- (2.25,0);
\draw[thick] (1.5,-0.5) -- (2.25,0);
\draw[thick] (1.5,0.5) -- (0.5,0.5);
\draw[thick] (1.5,-0.5) -- (0.5,0.5);
\draw[thick] (1.5,0.5) -- (0.5,-0.5);
\draw[thick] (2.25,0) -- (0.5,0.5);
\end{tikzpicture}\\
\begin{tikzpicture}[scale=0.7]
\filldraw [black]
(-0.5,-0.5) circle (3.5 pt)
(-0.5,0.5) circle (3.5 pt)
(0.5,0.5) circle (3.5 pt)
(0.5,-0.5) circle (3.5 pt)
(1.5,-0.5) circle (3.5 pt)
(1.5,0.5) circle (3.5 pt)
(2.25,0) circle (3.5 pt);
\node [label=below:$C_{17}$] (C_{17}) at (0.75,-1.25) {};
\node [label=left:$x$] (x) at (-0.5,0.5) {};
\node [label=left:$y$] (y) at (-0.5,-0.5) {};
\node [label=above:$a$] (a) at (0.5,0.5) {};
\node [label=below:$b$] (b) at (0.5,-0.5) {};
\draw[thick] (-0.5,-0.5) -- (-0.5,0.5);
\draw[thick] (-0.5,0.5) -- (0.5,0.5);
\draw[thick] (-0.5,-0.5) -- (0.5,-0.5);
\draw[thick] (0.5,0.5) -- (0.5,-0.5);
\draw[thick] (0.5,0.5) -- (1.5,0.5);
\draw[thick] (1.5,0.5) -- (1.5,-0.5);
\draw[thick] (1.5,0.5) -- (2.25,0);
\draw[thick] (1.5,-0.5) -- (2.25,0);
\draw[thick] (1.5,0.5) -- (0.5,0.5);
\draw[thick] (1.5,-0.5) -- (0.5,0.5);
\draw[thick] (1.5,0.5) -- (0.5,-0.5);
\draw[thick] (2.25,0) -- (0.5,-0.5);
\end{tikzpicture}
\begin{tikzpicture}[scale=0.7]
\filldraw [black]
(-0.5,-0.5) circle (3.5 pt)
(-0.5,0.5) circle (3.5 pt)
(0.5,0.5) circle (3.5 pt)
(0.5,-0.5) circle (3.5 pt)
(1.5,-0.5) circle (3.5 pt)
(1.5,0.5) circle (3.5 pt)
(2.25,0) circle (3.5 pt);
\node [label=below:$C_{18}$] (C_{18}) at (0.75,-1.25) {};
\node [label=left:$x$] (x) at (-0.5,0.5) {};
\node [label=left:$y$] (y) at (-0.5,-0.5) {};
\node [label=above:$a$] (a) at (0.5,0.5) {};
\node [label=below:$b$] (b) at (0.5,-0.5) {};
\draw[thick] (-0.5,-0.5) -- (-0.5,0.5);
\draw[thick] (-0.5,0.5) -- (0.5,0.5);
\draw[thick] (-0.5,-0.5) -- (0.5,-0.5);
\draw[thick] (0.5,0.5) -- (0.5,-0.5);
\draw[thick] (0.5,0.5) -- (1.5,0.5);
\draw[thick] (1.5,0.5) -- (1.5,-0.5);
\draw[thick] (1.5,0.5) -- (2.25,0);
\draw[thick] (1.5,-0.5) -- (2.25,0);
\draw[thick] (1.5,0.5) -- (0.5,0.5);
\draw[thick] (1.5,-0.5) -- (0.5,-0.5);
\draw[thick] (1.5,0.5) -- (0.5,-0.5);
\draw[thick] (2.25,0) -- (0.5,0.5);
\end{tikzpicture}
\begin{tikzpicture}[scale=0.7]
\filldraw [black]
(-0.5,-0.5) circle (3.5 pt)
(-0.5,0.5) circle (3.5 pt)
(0.5,0.5) circle (3.5 pt)
(0.5,-0.5) circle (3.5 pt)
(1.5,-0.5) circle (3.5 pt)
(1.5,0.5) circle (3.5 pt)
(2.25,0) circle (3.5 pt);
\node [label=below:$C_{19}$] (C_{19}) at (0.75,-1.25) {};
\node [label=left:$x$] (x) at (-0.5,0.5) {};
\node [label=left:$y$] (y) at (-0.5,-0.5) {};
\node [label=above:$a$] (a) at (0.5,0.5) {};
\node [label=below:$b$] (b) at (0.5,-0.5) {};
\draw[thick] (-0.5,-0.5) -- (-0.5,0.5);
\draw[thick] (-0.5,0.5) -- (0.5,0.5);
\draw[thick] (-0.5,-0.5) -- (0.5,-0.5);
\draw[thick] (0.5,0.5) -- (0.5,-0.5);
\draw[thick] (0.5,0.5) -- (1.5,0.5);
\draw[thick] (1.5,0.5) -- (1.5,-0.5);
\draw[thick] (1.5,0.5) -- (2.25,0);
\draw[thick] (1.5,-0.5) -- (2.25,0);
\draw[thick] (1.5,0.5) -- (0.5,0.5);
\draw[thick] (1.5,-0.5) -- (0.5,-0.5);
\draw[thick] (1.5,0.5) -- (0.5,-0.5);
\draw[thick] (2.25,0) -- (0.5,-0.5);
\end{tikzpicture}
\begin{tikzpicture}[scale=0.7]
\filldraw [black]
(-0.5,-0.5) circle (3.5 pt)
(-0.5,0.5) circle (3.5 pt)
(0.5,0.5) circle (3.5 pt)
(0.5,-0.5) circle (3.5 pt)
(1.5,-0.5) circle (3.5 pt)
(1.5,0.5) circle (3.5 pt)
(2.25,0) circle (3.5 pt);
\node [label=below:$C_{20}$] (C_{20}) at (0.75,-1.25) {};
\node [label=left:$x$] (x) at (-0.5,0.5) {};
\node [label=left:$y$] (y) at (-0.5,-0.5) {};
\node [label=above:$a$] (a) at (0.5,0.5) {};
\node [label=below:$b$] (b) at (0.5,-0.5) {};
\draw[thick] (-0.5,-0.5) -- (-0.5,0.5);
\draw[thick] (-0.5,0.5) -- (0.5,0.5);
\draw[thick] (-0.5,-0.5) -- (0.5,-0.5);
\draw[thick] (0.5,0.5) -- (0.5,-0.5);
\draw[thick] (0.5,0.5) -- (1.5,0.5);
\draw[thick] (1.5,0.5) -- (1.5,-0.5);
\draw[thick] (1.5,0.5) -- (2.25,0);
\draw[thick] (1.5,-0.5) -- (2.25,0);
\draw[thick] (1.5,0.5) -- (0.5,0.5);
\draw[thick] (1.5,-0.5) -- (0.5,-0.5);
\draw[thick] (2.25,0) -- (0.5,0.5);
\draw[thick] (2.25,0) -- (0.5,-0.5);
\end{tikzpicture}
\begin{tikzpicture}[scale=0.7]
\filldraw [black]
(-0.5,-0.5) circle (3.5 pt)
(-0.5,0.5) circle (3.5 pt)
(0.5,0.5) circle (3.5 pt)
(0.5,-0.5) circle (3.5 pt)
(1.5,-0.5) circle (3.5 pt)
(1.5,0.5) circle (3.5 pt)
(2.25,0) circle (3.5 pt);
\node [label=below:$C_{21}$] (C_{21}) at (0.75,-1.25) {};
\node [label=left:$x$] (x) at (-0.5,0.5) {};
\node [label=left:$y$] (y) at (-0.5,-0.5) {};
\node [label=above:$a$] (a) at (0.5,0.5) {};
\node [label=below:$b$] (b) at (0.5,-0.5) {};
\draw[thick] (-0.5,-0.5) -- (-0.5,0.5);
\draw[thick] (-0.5,0.5) -- (0.5,0.5);
\draw[thick] (-0.5,-0.5) -- (0.5,-0.5);
\draw[thick] (0.5,0.5) -- (0.5,-0.5);
\draw[thick] (0.5,0.5) -- (1.5,0.5);
\draw[thick] (1.5,0.5) -- (1.5,-0.5);
\draw[thick] (1.5,0.5) -- (2.25,0);
\draw[thick] (1.5,-0.5) -- (2.25,0);
\draw[thick] (1.5,0.5) -- (0.5,0.5);
\draw[thick] (1.5,-0.5) -- (0.5,0.5);
\draw[thick] (2.25,0) -- (0.5,0.5);
\draw[thick] (2.25,0) -- (0.5,-0.5);
\end{tikzpicture}\\
\begin{tikzpicture}[scale=0.7]
\filldraw [black]
(-0.5,-0.5) circle (3.5 pt)
(-0.5,0.5) circle (3.5 pt)
(0.5,0.5) circle (3.5 pt)
(0.5,-0.5) circle (3.5 pt)
(1.5,-0.5) circle (3.5 pt)
(1.5,0.5) circle (3.5 pt)
(2.25,0) circle (3.5 pt);
\node [label=below:$C_{22}$] (C_{22}) at (0.75,-1.25) {};
\node [label=left:$x$] (x) at (-0.5,0.5) {};
\node [label=left:$y$] (y) at (-0.5,-0.5) {};
\node [label=above:$a$] (a) at (0.5,0.5) {};
\node [label=below:$b$] (b) at (0.5,-0.5) {};
\draw[thick] (-0.5,-0.5) -- (-0.5,0.5);
\draw[thick] (-0.5,0.5) -- (0.5,0.5);
\draw[thick] (-0.5,-0.5) -- (0.5,-0.5);
\draw[thick] (0.5,0.5) -- (0.5,-0.5);
\draw[thick] (0.5,0.5) -- (1.5,0.5);
\draw[thick] (1.5,0.5) -- (1.5,-0.5);
\draw[thick] (1.5,0.5) -- (2.25,0);
\draw[thick] (1.5,-0.5) -- (2.25,0);
\draw[thick] (1.5,0.5) -- (0.5,0.5);
\draw[thick] (1.5,0.5) -- (0.5,-0.5);
\draw[thick] (1.5,-0.5) -- (0.5,0.5);
\draw[thick] (1.5,-0.5) -- (0.5,-0.5);
\draw[thick] (2.25,0) -- (0.5,0.5);
\end{tikzpicture}
\begin{tikzpicture}[scale=0.7]
\filldraw [black]
(-0.5,-0.5) circle (3.5 pt)
(-0.5,0.5) circle (3.5 pt)
(0.5,0.5) circle (3.5 pt)
(0.5,-0.5) circle (3.5 pt)
(1.5,-0.5) circle (3.5 pt)
(1.5,0.5) circle (3.5 pt)
(2.25,0) circle (3.5 pt);
\node [label=below:$C_{23}$] (C_{23}) at (0.75,-1.25) {};
\node [label=left:$x$] (x) at (-0.5,0.5) {};
\node [label=left:$y$] (y) at (-0.5,-0.5) {};
\node [label=above:$a$] (a) at (0.5,0.5) {};
\node [label=below:$b$] (b) at (0.5,-0.5) {};
\draw[thick] (-0.5,-0.5) -- (-0.5,0.5);
\draw[thick] (-0.5,0.5) -- (0.5,0.5);
\draw[thick] (-0.5,-0.5) -- (0.5,-0.5);
\draw[thick] (0.5,0.5) -- (0.5,-0.5);
\draw[thick] (0.5,0.5) -- (1.5,0.5);
\draw[thick] (1.5,0.5) -- (1.5,-0.5);
\draw[thick] (1.5,0.5) -- (2.25,0);
\draw[thick] (1.5,-0.5) -- (2.25,0);
\draw[thick] (1.5,0.5) -- (0.5,0.5);
\draw[thick] (1.5,0.5) -- (0.5,-0.5);
\draw[thick] (1.5,-0.5) -- (0.5,0.5);
\draw[thick] (2.25,0) -- (0.5,0.5);
\draw[thick] (2.25,0) -- (0.5,-0.5);
\end{tikzpicture}
\begin{tikzpicture}[scale=0.7]
\filldraw [black]
(-0.5,-0.5) circle (3.5 pt)
(-0.5,0.5) circle (3.5 pt)
(0.5,0.5) circle (3.5 pt)
(0.5,-0.5) circle (3.5 pt)
(1.5,-0.5) circle (3.5 pt)
(1.5,0.5) circle (3.5 pt)
(2.25,0) circle (3.5 pt);
\node [label=below:$C_{24}$] (C_{24}) at (0.75,-1.25) {};
\node [label=left:$x$] (x) at (-0.5,0.5) {};
\node [label=left:$y$] (y) at (-0.5,-0.5) {};
\node [label=above:$a$] (a) at (0.5,0.5) {};
\node [label=below:$b$] (b) at (0.5,-0.5) {};
\draw[thick] (-0.5,-0.5) -- (-0.5,0.5);
\draw[thick] (-0.5,0.5) -- (0.5,0.5);
\draw[thick] (-0.5,-0.5) -- (0.5,-0.5);
\draw[thick] (0.5,0.5) -- (0.5,-0.5);
\draw[thick] (0.5,0.5) -- (1.5,0.5);
\draw[thick] (1.5,0.5) -- (1.5,-0.5);
\draw[thick] (1.5,0.5) -- (2.25,0);
\draw[thick] (1.5,-0.5) -- (2.25,0);
\draw[thick] (1.5,0.5) -- (0.5,0.5);
\draw[thick] (1.5,0.5) -- (0.5,-0.5);
\draw[thick] (1.5,-0.5) -- (0.5,-0.5);
\draw[thick] (2.25,0) -- (0.5,0.5);
\draw[thick] (2.25,0) -- (0.5,-0.5);
\end{tikzpicture}
\begin{tikzpicture}[scale=0.7]
\filldraw [black]
(-0.5,-0.5) circle (3.5 pt)
(-0.5,0.5) circle (3.5 pt)
(0.5,0.5) circle (3.5 pt)
(0.5,-0.5) circle (3.5 pt)
(1.5,-0.5) circle (3.5 pt)
(1.5,0.5) circle (3.5 pt)
(2.25,0) circle (3.5 pt);
\node [label=below:$C_{25}$] (C_{25}) at (0.75,-1.25) {};
\node [label=left:$x$] (x) at (-0.5,0.5) {};
\node [label=left:$y$] (y) at (-0.5,-0.5) {};
\node [label=above:$a$] (a) at (0.5,0.5) {};
\node [label=below:$b$] (b) at (0.5,-0.5) {};
\draw[thick] (-0.5,-0.5) -- (-0.5,0.5);
\draw[thick] (-0.5,0.5) -- (0.5,0.5);
\draw[thick] (-0.5,-0.5) -- (0.5,-0.5);
\draw[thick] (0.5,0.5) -- (0.5,-0.5);
\draw[thick] (0.5,0.5) -- (1.5,0.5);
\draw[thick] (1.5,0.5) -- (1.5,-0.5);
\draw[thick] (1.5,0.5) -- (2.25,0);
\draw[thick] (1.5,-0.5) -- (2.25,0);
\draw[thick] (1.5,0.5) -- (0.5,0.5);
\draw[thick] (1.5,-0.5) -- (0.5,0.5);
\draw[thick] (1.5,-0.5) -- (0.5,-0.5);
\draw[thick] (2.25,0) -- (0.5,0.5);
\draw[thick] (2.25,0) -- (0.5,-0.5);
\end{tikzpicture}
\begin{tikzpicture}[scale=0.7]
\filldraw [black]
(-0.5,-0.5) circle (3.5 pt)
(-0.5,0.5) circle (3.5 pt)
(0.5,0.5) circle (3.5 pt)
(0.5,-0.5) circle (3.5 pt)
(1.5,-0.5) circle (3.5 pt)
(1.5,0.5) circle (3.5 pt)
(2.25,0) circle (3.5 pt);
\node [label=below:$C_{26}$] (C_{26}) at (0.75,-1.25) {};
\node [label=left:$x$] (x) at (-0.5,0.5) {};
\node [label=left:$y$] (y) at (-0.5,-0.5) {};
\node [label=above:$a$] (a) at (0.5,0.5) {};
\node [label=below:$b$] (b) at (0.5,-0.5) {};
\draw[thick] (-0.5,-0.5) -- (-0.5,0.5);
\draw[thick] (-0.5,0.5) -- (0.5,0.5);
\draw[thick] (-0.5,-0.5) -- (0.5,-0.5);
\draw[thick] (0.5,0.5) -- (0.5,-0.5);
\draw[thick] (0.5,0.5) -- (1.5,0.5);
\draw[thick] (1.5,0.5) -- (1.5,-0.5);
\draw[thick] (1.5,0.5) -- (2.25,0);
\draw[thick] (1.5,-0.5) -- (2.25,0);
\draw[thick] (1.5,0.5) -- (0.5,0.5);
\draw[thick] (1.5,-0.5) -- (0.5,0.5);
\draw[thick] (1.5,-0.5) -- (0.5,-0.5);
\draw[thick] (1.5,0.5) -- (0.5,-0.5);
\draw[thick] (2.25,0) -- (0.5,0.5);
\draw[thick] (2.25,0) -- (0.5,-0.5);
\end{tikzpicture}
\caption{All possible cases between $C$ and a triangle in $G$}\label{pic5}
\end{figure}

\begin{figure}[htb]
\centering
\begin{tikzpicture}[scale=0.7]
\filldraw [black]
(-1.5,0) circle (3.5 pt)
(-0.5,0) circle (3.5 pt)
(0.5,0) circle (3.5 pt)
(1.5,0) circle (3.5 pt)
(2.25,0.5) circle (3.5 pt)
(2.25,-0.5) circle (3.5 pt);
\node [label=below:$D_1$] (D_1) at (0.25,-0.75) {};
\node [label=below:$x$] (x) at (-0.5,0) {};
\node [label=below:$a$] (a) at (-1.5,0) {};
\node [label=below:$y$] (y) at (0.5,0) {};
\node [label=below:$b$] (b) at (1.5,0) {};
\draw[thick] (-1.5,0) -- (-0.5,0);
\draw[thick] (-0.5,0) -- (0.5,0);
\draw[thick] (0.5,0) -- (1.5,0);
\draw[thick] (1.5,0) -- (2.25,0.5);
\draw[thick] (1.5,0) -- (2.25,-0.5);
\draw[thick] (2.25,0.5) -- (2.25,-0.5);
\end{tikzpicture}
\begin{tikzpicture}[scale=0.7]
\filldraw [black]
(-1.5,0) circle (3.5 pt)
(-0.5,0) circle (3.5 pt)
(0.5,0) circle (3.5 pt)
(1.5,0) circle (3.5 pt)
(2.25,0.5) circle (3.5 pt)
(2.25,-0.5) circle (3.5 pt);
\node [label=below:$D_2$] (D_2) at (0.25,-0.75) {};
\node [label=below:$x$] (x) at (-0.5,0) {};
\node [label=below:$a$] (a) at (-1.5,0) {};
\node [label=below:$y$] (y) at (0.5,0) {};
\node [label=below:$b$] (b) at (1.5,0) {};
\draw[thick] (-1.5,0) -- (-0.5,0);
\draw[thick] (-0.5,0) -- (0.5,0);
\draw[thick] (0.5,0) -- (1.5,0);
\draw[thick] (1.5,0) -- (2.25,0.5);
\draw[thick] (1.5,0) -- (2.25,-0.5);
\draw[thick] (2.25,0.5) -- (2.25,-0.5);
\draw[thick] (2.25,0.5) -- (-1.5,0);
\end{tikzpicture}
\begin{tikzpicture}[scale=0.7]
\filldraw [black]
(-1.5,0) circle (3.5 pt)
(-0.5,0) circle (3.5 pt)
(0.5,0) circle (3.5 pt)
(1.5,0) circle (3.5 pt)
(2.25,0.5) circle (3.5 pt)
(2.25,-0.5) circle (3.5 pt);
\node [label=below:$D_3$] (D_3) at (0.25,-0.75) {};
\node [label=below:$x$] (x) at (-0.5,0) {};
\node [label=below:$a$] (a) at (-1.5,0) {};
\node [label=below:$y$] (y) at (0.5,0) {};
\node [label=below:$b$] (b) at (1.5,0) {};
\draw[thick] (-1.5,0) -- (-0.5,0);
\draw[thick] (-0.5,0) -- (0.5,0);
\draw[thick] (0.5,0) -- (1.5,0);
\draw[thick] (1.5,0) -- (2.25,0.5);
\draw[thick] (1.5,0) -- (2.25,-0.5);
\draw[thick] (2.25,0.5) -- (2.25,-0.5);
\draw[thick] (2.25,0.5) -- (-1.5,0);
\draw[thick] (2.25,-0.5) -- (-1.5,0);
\end{tikzpicture}
\begin{tikzpicture}[scale=0.7]
\filldraw [black]
(-1.5,0) circle (3.5 pt)
(-0.5,0) circle (3.5 pt)
(0.5,0) circle (3.5 pt)
(1.5,0) circle (3.5 pt)
(-0.30,0.5) circle (3.5 pt)
(0.30,0.5) circle (3.5 pt)
(0,1) circle (3.5 pt);
\node [label=below:$D_4$] (D_4) at (0,-0.75) {};
\node [label=below:$x$] (x) at (-0.5,0) {};
\node [label=below:$a$] (a) at (-1.5,0) {};
\node [label=below:$y$] (y) at (0.5,0) {};
\node [label=below:$b$] (b) at (1.5,0) {};
\draw[thick] (-1.5,0) -- (-0.5,0);
\draw[thick] (-0.5,0) -- (0.5,0);
\draw[thick] (0.5,0) -- (1.5,0);
\draw[thick] (-0.30,0.5) -- (0.30,0.5);
\draw[thick] (-0.30,0.5) -- (0,1);
\draw[thick] (0.30,0.5) -- (0,1);
\draw[thick] (-0.30,0.5) -- (-1.5,0);
\end{tikzpicture}\\
\begin{tikzpicture}[scale=0.7]
\filldraw [black]
(-1.5,0) circle (3.5 pt)
(-0.5,0) circle (3.5 pt)
(0.5,0) circle (3.5 pt)
(1.5,0) circle (3.5 pt)
(-0.30,0.5) circle (3.5 pt)
(0.30,0.5) circle (3.5 pt)
(0,1) circle (3.5 pt);
\node [label=below:$D_5$] (D_5) at (0,-0.75) {};
\node [label=below:$x$] (x) at (-0.5,0) {};
\node [label=below:$a$] (a) at (-1.5,0) {};
\node [label=below:$y$] (y) at (0.5,0) {};
\node [label=below:$b$] (b) at (1.5,0) {};
\draw[thick] (-1.5,0) -- (-0.5,0);
\draw[thick] (-0.5,0) -- (0.5,0);
\draw[thick] (0.5,0) -- (1.5,0);
\draw[thick] (-0.30,0.5) -- (0.30,0.5);
\draw[thick] (-0.30,0.5) -- (0,1);
\draw[thick] (0.30,0.5) -- (0,1);
\draw[thick] (-0.30,0.5) -- (-1.5,0);
\draw[thick] (-0.30,0.5) -- (1.5,0);
\end{tikzpicture}
\begin{tikzpicture}[scale=0.7]
\filldraw [black]
(-1.5,0) circle (3.5 pt)
(-0.5,0) circle (3.5 pt)
(0.5,0) circle (3.5 pt)
(1.5,0) circle (3.5 pt)
(-0.30,0.5) circle (3.5 pt)
(0.30,0.5) circle (3.5 pt)
(0,1) circle (3.5 pt);
\node [label=below:$D_6$] (D_6) at (0,-0.75) {};
\node [label=below:$x$] (x) at (-0.5,0) {};
\node [label=below:$a$] (a) at (-1.5,0) {};
\node [label=below:$y$] (y) at (0.5,0) {};
\node [label=below:$b$] (b) at (1.5,0) {};
\draw[thick] (-1.5,0) -- (-0.5,0);
\draw[thick] (-0.5,0) -- (0.5,0);
\draw[thick] (0.5,0) -- (1.5,0);
\draw[thick] (-0.30,0.5) -- (0.30,0.5);
\draw[thick] (-0.30,0.5) -- (0,1);
\draw[thick] (0.30,0.5) -- (0,1);
\draw[thick] (-0.30,0.5) -- (-1.5,0);
\draw[thick] (0.30,0.5) -- (-1.5,0);
\end{tikzpicture}
\begin{tikzpicture}[scale=0.7]
\filldraw [black]
(-1.5,0) circle (3.5 pt)
(-0.5,0) circle (3.5 pt)
(0.5,0) circle (3.5 pt)
(1.5,0) circle (3.5 pt)
(-0.30,0.5) circle (3.5 pt)
(0.30,0.5) circle (3.5 pt)
(0,1) circle (3.5 pt);
\node [label=below:$D_7$] (D_7) at (0,-0.75) {};
\node [label=below:$x$] (x) at (-0.5,0) {};
\node [label=below:$a$] (a) at (-1.5,0) {};
\node [label=below:$y$] (y) at (0.5,0) {};
\node [label=below:$b$] (b) at (1.5,0) {};
\draw[thick] (-1.5,0) -- (-0.5,0);
\draw[thick] (-0.5,0) -- (0.5,0);
\draw[thick] (0.5,0) -- (1.5,0);
\draw[thick] (-0.30,0.5) -- (0.30,0.5);
\draw[thick] (-0.30,0.5) -- (0,1);
\draw[thick] (0.30,0.5) -- (0,1);
\draw[thick] (-0.30,0.5) -- (-1.5,0);
\draw[thick] (0.30,0.5) -- (1.5,0);
\end{tikzpicture}
\begin{tikzpicture}[scale=0.7]
\filldraw [black]
(-1.5,0) circle (3.5 pt)
(-0.5,0) circle (3.5 pt)
(0.5,0) circle (3.5 pt)
(1.5,0) circle (3.5 pt)
(-0.30,0.5) circle (3.5 pt)
(0.30,0.5) circle (3.5 pt)
(0,1) circle (3.5 pt);
\node [label=below:$D_8$] (D_8) at (0,-0.75) {};
\node [label=below:$x$] (x) at (-0.5,0) {};
\node [label=below:$a$] (a) at (-1.5,0) {};
\node [label=below:$y$] (y) at (0.5,0) {};
\node [label=below:$b$] (b) at (1.5,0) {};
\draw[thick] (-1.5,0) -- (-0.5,0);
\draw[thick] (-0.5,0) -- (0.5,0);
\draw[thick] (0.5,0) -- (1.5,0);
\draw[thick] (-0.30,0.5) -- (0.30,0.5);
\draw[thick] (-0.30,0.5) -- (0,1);
\draw[thick] (0.30,0.5) -- (0,1);
\draw[thick] (-0.30,0.5) -- (-1.5,0);
\draw[thick] (-0.30,0.5) -- (1.5,0);
\draw[thick] (0.30,0.5) -- (-1.5,0);
\end{tikzpicture}\\
\begin{tikzpicture}[scale=0.7]
\filldraw [black]
(-1.5,0) circle (3.5 pt)
(-0.5,0) circle (3.5 pt)
(0.5,0) circle (3.5 pt)
(1.5,0) circle (3.5 pt)
(-0.30,0.5) circle (3.5 pt)
(0.30,0.5) circle (3.5 pt)
(0,1) circle (3.5 pt);
\node [label=below:$D_9$] (D_9) at (0,-0.75) {};
\node [label=below:$x$] (x) at (-0.5,0) {};
\node [label=below:$a$] (a) at (-1.5,0) {};
\node [label=below:$y$] (y) at (0.5,0) {};
\node [label=below:$b$] (b) at (1.5,0) {};
\draw[thick] (-1.5,0) -- (-0.5,0);
\draw[thick] (-0.5,0) -- (0.5,0);
\draw[thick] (0.5,0) -- (1.5,0);
\draw[thick] (-0.30,0.5) -- (0.30,0.5);
\draw[thick] (-0.30,0.5) -- (0,1);
\draw[thick] (0.30,0.5) -- (0,1);
\draw[thick] (-0.30,0.5) -- (-1.5,0);
\draw[thick] (0,1) -- (-1.5,0);
\draw[thick] (0.30,0.5) -- (-1.5,0);
\end{tikzpicture}
\begin{tikzpicture}[scale=0.7]
\filldraw [black]
(-1.5,0) circle (3.5 pt)
(-0.5,0) circle (3.5 pt)
(0.5,0) circle (3.5 pt)
(1.5,0) circle (3.5 pt)
(-0.30,0.5) circle (3.5 pt)
(0.30,0.5) circle (3.5 pt)
(0,1) circle (3.5 pt);
\node [label=below:$D_{10}$] (D_{10}) at (0,-0.75) {};
\node [label=below:$x$] (x) at (-0.5,0) {};
\node [label=below:$a$] (a) at (-1.5,0) {};
\node [label=below:$y$] (y) at (0.5,0) {};
\node [label=below:$b$] (b) at (1.5,0) {};
\draw[thick] (-1.5,0) -- (-0.5,0);
\draw[thick] (-0.5,0) -- (0.5,0);
\draw[thick] (0.5,0) -- (1.5,0);
\draw[thick] (-0.30,0.5) -- (0.30,0.5);
\draw[thick] (-0.30,0.5) -- (0,1);
\draw[thick] (0.30,0.5) -- (0,1);
\draw[thick] (-0.30,0.5) -- (-1.5,0);
\draw[thick] (0,1) -- (1.5,0);
\draw[thick] (0.30,0.5) -- (-1.5,0);
\end{tikzpicture}
\begin{tikzpicture}[scale=0.7]
\filldraw [black]
(-1.5,0) circle (3.5 pt)
(-0.5,0) circle (3.5 pt)
(0.5,0) circle (3.5 pt)
(1.5,0) circle (3.5 pt)
(-0.30,0.5) circle (3.5 pt)
(0.30,0.5) circle (3.5 pt)
(0,1) circle (3.5 pt);
\node [label=below:$D_{11}$] (D_{11}) at (0,-0.75) {};
\node [label=below:$x$] (x) at (-0.5,0) {};
\node [label=below:$a$] (a) at (-1.5,0) {};
\node [label=below:$y$] (y) at (0.5,0) {};
\node [label=below:$b$] (b) at (1.5,0) {};
\draw[thick] (-1.5,0) -- (-0.5,0);
\draw[thick] (-0.5,0) -- (0.5,0);
\draw[thick] (0.5,0) -- (1.5,0);
\draw[thick] (-0.30,0.5) -- (0.30,0.5);
\draw[thick] (-0.30,0.5) -- (0,1);
\draw[thick] (0.30,0.5) -- (0,1);
\draw[thick] (-0.30,0.5) -- (-1.5,0);
\draw[thick] (0,1) -- (1.5,0);
\draw[thick] (0.30,0.5) -- (1.5,0);
\end{tikzpicture}
\begin{tikzpicture}[scale=0.7]
\filldraw [black]
(-1.5,0) circle (3.5 pt)
(-0.5,0) circle (3.5 pt)
(0.5,0) circle (3.5 pt)
(1.5,0) circle (3.5 pt)
(-0.30,0.5) circle (3.5 pt)
(0.30,0.5) circle (3.5 pt)
(0,1) circle (3.5 pt);
\node [label=below:$D_{12}$] (D_{12}) at (0,-0.75) {};
\node [label=below:$x$] (x) at (-0.5,0) {};
\node [label=below:$a$] (a) at (-1.5,0) {};
\node [label=below:$y$] (y) at (0.5,0) {};
\node [label=below:$b$] (b) at (1.5,0) {};
\draw[thick] (-1.5,0) -- (-0.5,0);
\draw[thick] (-0.5,0) -- (0.5,0);
\draw[thick] (0.5,0) -- (1.5,0);
\draw[thick] (-0.30,0.5) -- (0.30,0.5);
\draw[thick] (-0.30,0.5) -- (0,1);
\draw[thick] (0.30,0.5) -- (0,1);
\draw[thick] (-0.30,0.5) -- (-1.5,0);
\draw[thick] (-0.30,0.5) -- (1.5,0);
\draw[thick] (0.30,0.5) -- (-1.5,0);
\draw[thick] (0,1) -- (-1.5,0);
\end{tikzpicture}\\
\begin{tikzpicture}[scale=0.7]
\filldraw [black]
(-1.5,0) circle (3.5 pt)
(-0.5,0) circle (3.5 pt)
(0.5,0) circle (3.5 pt)
(1.5,0) circle (3.5 pt)
(-0.30,0.5) circle (3.5 pt)
(0.30,0.5) circle (3.5 pt)
(0,1) circle (3.5 pt);
\node [label=below:$D_{13}$] (D_{13}) at (0,-0.75) {};
\node [label=below:$x$] (x) at (-0.5,0) {};
\node [label=below:$a$] (a) at (-1.5,0) {};
\node [label=below:$y$] (y) at (0.5,0) {};
\node [label=below:$b$] (b) at (1.5,0) {};
\draw[thick] (-1.5,0) -- (-0.5,0);
\draw[thick] (-0.5,0) -- (0.5,0);
\draw[thick] (0.5,0) -- (1.5,0);
\draw[thick] (-0.30,0.5) -- (0.30,0.5);
\draw[thick] (-0.30,0.5) -- (0,1);
\draw[thick] (0.30,0.5) -- (0,1);
\draw[thick] (-0.30,0.5) -- (-1.5,0);
\draw[thick] (-0.30,0.5) -- (1.5,0);
\draw[thick] (0.30,0.5) -- (-1.5,0);
\draw[thick] (0,1) -- (1.5,0);
\end{tikzpicture}
\begin{tikzpicture}[scale=0.7]
\filldraw [black]
(-1.5,0) circle (3.5 pt)
(-0.5,0) circle (3.5 pt)
(0.5,0) circle (3.5 pt)
(1.5,0) circle (3.5 pt)
(-0.30,0.5) circle (3.5 pt)
(0.30,0.5) circle (3.5 pt)
(0,1) circle (3.5 pt);
\node [label=below:$D_{14}$] (D_{14}) at (0,-0.75) {};
\node [label=below:$x$] (x) at (-0.5,0) {};
\node [label=below:$a$] (a) at (-1.5,0) {};
\node [label=below:$y$] (y) at (0.5,0) {};
\node [label=below:$b$] (b) at (1.5,0) {};
\draw[thick] (-1.5,0) -- (-0.5,0);
\draw[thick] (-0.5,0) -- (0.5,0);
\draw[thick] (0.5,0) -- (1.5,0);
\draw[thick] (-0.30,0.5) -- (0.30,0.5);
\draw[thick] (-0.30,0.5) -- (0,1);
\draw[thick] (0.30,0.5) -- (0,1);
\draw[thick] (-0.30,0.5) -- (-1.5,0);
\draw[thick] (0.30,0.5) -- (-1.5,0);
\draw[thick] (0.30,0.5) -- (1.5,0);
\draw[thick] (0,1) -- (-1.5,0);
\end{tikzpicture}
\begin{tikzpicture}[scale=0.7]
\filldraw [black]
(-1.5,0) circle (3.5 pt)
(-0.5,0) circle (3.5 pt)
(0.5,0) circle (3.5 pt)
(1.5,0) circle (3.5 pt)
(-0.30,0.5) circle (3.5 pt)
(0.30,0.5) circle (3.5 pt)
(0,1) circle (3.5 pt);
\node [label=below:$D_{15}$] (D_{15}) at (0,-0.75) {};
\node [label=below:$x$] (x) at (-0.5,0) {};
\node [label=below:$a$] (a) at (-1.5,0) {};
\node [label=below:$y$] (y) at (0.5,0) {};
\node [label=below:$b$] (b) at (1.5,0) {};
\draw[thick] (-1.5,0) -- (-0.5,0);
\draw[thick] (-0.5,0) -- (0.5,0);
\draw[thick] (0.5,0) -- (1.5,0);
\draw[thick] (-0.30,0.5) -- (0.30,0.5);
\draw[thick] (-0.30,0.5) -- (0,1);
\draw[thick] (0.30,0.5) -- (0,1);
\draw[thick] (-0.30,0.5) -- (-1.5,0);
\draw[thick] (0.30,0.5) -- (-1.5,0);
\draw[thick] (0.30,0.5) -- (1.5,0);
\draw[thick] (0,1) -- (1.5,0);
\end{tikzpicture}
\begin{tikzpicture}[scale=0.7]
\filldraw [black]
(-1.5,0) circle (3.5 pt)
(-0.5,0) circle (3.5 pt)
(0.5,0) circle (3.5 pt)
(1.5,0) circle (3.5 pt)
(-0.30,0.5) circle (3.5 pt)
(0.30,0.5) circle (3.5 pt)
(0,1) circle (3.5 pt);
\node [label=below:$D_{16}$] (D_{16}) at (0,-0.75) {};
\node [label=below:$x$] (x) at (-0.5,0) {};
\node [label=below:$a$] (a) at (-1.5,0) {};
\node [label=below:$y$] (y) at (0.5,0) {};
\node [label=below:$b$] (b) at (1.5,0) {};
\draw[thick] (-1.5,0) -- (-0.5,0);
\draw[thick] (-0.5,0) -- (0.5,0);
\draw[thick] (0.5,0) -- (1.5,0);
\draw[thick] (-0.30,0.5) -- (0.30,0.5);
\draw[thick] (-0.30,0.5) -- (0,1);
\draw[thick] (0.30,0.5) -- (0,1);
\draw[thick] (-0.30,0.5) -- (-1.5,0);
\draw[thick] (0.30,0.5) -- (1.5,0);
\draw[thick] (0,1) -- (1.5,0);
\draw[thick] (0,1) -- (-1.5,0);
\end{tikzpicture}\\
\begin{tikzpicture}[scale=0.7]
\filldraw [black]
(-1.5,0) circle (3.5 pt)
(-0.5,0) circle (3.5 pt)
(0.5,0) circle (3.5 pt)
(1.5,0) circle (3.5 pt)
(-0.30,0.5) circle (3.5 pt)
(0.30,0.5) circle (3.5 pt)
(0,1) circle (3.5 pt);
\node [label=below:$D_{17}$] (D_{17}) at (0,-0.75) {};
\node [label=below:$x$] (x) at (-0.5,0) {};
\node [label=below:$a$] (a) at (-1.5,0) {};
\node [label=below:$y$] (y) at (0.5,0) {};
\node [label=below:$b$] (b) at (1.5,0) {};
\draw[thick] (-1.5,0) -- (-0.5,0);
\draw[thick] (-0.5,0) -- (0.5,0);
\draw[thick] (0.5,0) -- (1.5,0);
\draw[thick] (-0.30,0.5) -- (0.30,0.5);
\draw[thick] (-0.30,0.5) -- (0,1);
\draw[thick] (0.30,0.5) -- (0,1);
\draw[thick] (-0.30,0.5) -- (-1.5,0);
\draw[thick] (-0.30,0.5) -- (1.5,0);
\draw[thick] (0.30,0.5) -- (-1.5,0);
\draw[thick] (0.30,0.5) -- (1.5,0);
\draw[thick] (0,1) -- (-1.5,0);
\end{tikzpicture}
\begin{tikzpicture}[scale=0.7]
\filldraw [black]
(-1.5,0) circle (3.5 pt)
(-0.5,0) circle (3.5 pt)
(0.5,0) circle (3.5 pt)
(1.5,0) circle (3.5 pt)
(-0.30,0.5) circle (3.5 pt)
(0.30,0.5) circle (3.5 pt)
(0,1) circle (3.5 pt);
\node [label=below:$D_{18}$] (D_{18}) at (0,-0.75) {};
\node [label=below:$x$] (x) at (-0.5,0) {};
\node [label=below:$a$] (a) at (-1.5,0) {};
\node [label=below:$y$] (y) at (0.5,0) {};
\node [label=below:$b$] (b) at (1.5,0) {};
\draw[thick] (-1.5,0) -- (-0.5,0);
\draw[thick] (-0.5,0) -- (0.5,0);
\draw[thick] (0.5,0) -- (1.5,0);
\draw[thick] (-0.30,0.5) -- (0.30,0.5);
\draw[thick] (-0.30,0.5) -- (0,1);
\draw[thick] (0.30,0.5) -- (0,1);
\draw[thick] (-0.30,0.5) -- (-1.5,0);
\draw[thick] (-0.30,0.5) -- (1.5,0);
\draw[thick] (0.30,0.5) -- (-1.5,0);
\draw[thick] (0,1) -- (-1.5,0);
\draw[thick] (0,1) -- (1.5,0);
\end{tikzpicture}
\begin{tikzpicture}[scale=0.7]
\filldraw [black]
(-1.5,0) circle (3.5 pt)
(-0.5,0) circle (3.5 pt)
(0.5,0) circle (3.5 pt)
(1.5,0) circle (3.5 pt)
(-0.30,0.5) circle (3.5 pt)
(0.30,0.5) circle (3.5 pt)
(0,1) circle (3.5 pt);
\node [label=below:$D_{19}$] (D_{19}) at (0,-0.75) {};
\node [label=below:$x$] (x) at (-0.5,0) {};
\node [label=below:$a$] (a) at (-1.5,0) {};
\node [label=below:$y$] (y) at (0.5,0) {};
\node [label=below:$b$] (b) at (1.5,0) {};
\draw[thick] (-1.5,0) -- (-0.5,0);
\draw[thick] (-0.5,0) -- (0.5,0);
\draw[thick] (0.5,0) -- (1.5,0);
\draw[thick] (-0.30,0.5) -- (0.30,0.5);
\draw[thick] (-0.30,0.5) -- (0,1);
\draw[thick] (0.30,0.5) -- (0,1);
\draw[thick] (-0.30,0.5) -- (-1.5,0);
\draw[thick] (0.30,0.5) -- (-1.5,0);
\draw[thick] (0.30,0.5) -- (1.5,0);
\draw[thick] (0,1) -- (-1.5,0);
\draw[thick] (0,1) -- (1.5,0);
\end{tikzpicture}
\begin{tikzpicture}[scale=0.7]
\filldraw [black]
(-1.5,0) circle (3.5 pt)
(-0.5,0) circle (3.5 pt)
(0.5,0) circle (3.5 pt)
(1.5,0) circle (3.5 pt)
(-0.30,0.5) circle (3.5 pt)
(0.30,0.5) circle (3.5 pt)
(0,1) circle (3.5 pt);
\node [label=below:$D_{20}$] (D_{20}) at (0,-0.75) {};
\node [label=below:$x$] (x) at (-0.5,0) {};
\node [label=below:$a$] (a) at (-1.5,0) {};
\node [label=below:$y$] (y) at (0.5,0) {};
\node [label=below:$b$] (b) at (1.5,0) {};
\draw[thick] (-1.5,0) -- (-0.5,0);
\draw[thick] (-0.5,0) -- (0.5,0);
\draw[thick] (0.5,0) -- (1.5,0);
\draw[thick] (-0.30,0.5) -- (0.30,0.5);
\draw[thick] (-0.30,0.5) -- (0,1);
\draw[thick] (0.30,0.5) -- (0,1);
\draw[thick] (-0.30,0.5) -- (-1.5,0);
\draw[thick] (0.30,0.5) -- (-1.5,0);
\draw[thick] (0.30,0.5) -- (1.5,0);
\draw[thick] (-0.30,0.5) -- (1.5,0);
\draw[thick] (0,1) -- (-1.5,0);
\draw[thick] (0,1) -- (1.5,0);
\end{tikzpicture}
\caption{All possible cases between $D$ and a triangle in $G$}\label{pic6}
\end{figure}
\end{proof}

Now, suppose the graph $G$ is cospectral with friendship graph $F_n$. We study the case that, two vertices of degree 2 in $G$ are not adjacent. In this case, with one more condition we can prove that $G$ is isomorphic to $F_n$.

\begin{lem}\label{lxy}
Let $\{x,y\}$ be two vertices of degree $2$ in graph $G$, where these vertices are not adjacent. Then $x$ and $y$ does not have common neighbours.
\end{lem}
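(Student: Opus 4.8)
The plan is to argue by contradiction. Suppose $x$ and $y$ are non-adjacent, each of degree $2$, and that they do share a common neighbour $z$. Write $N(x)=\{z,a\}$ and $N(y)=\{z,b\}$; since $x\not\sim y$ we have $a\neq y$, $b\neq x$, and of course $a\neq z\neq b$. The question then reduces to understanding the induced subgraph spanned by $\{a,x,z,y,b\}$. Because $\deg_G(x)=\deg_G(y)=2$, the only edges that can occur inside this set beyond the path $a\!-\!x\!-\!z\!-\!y\!-\!b$ are the three ``chords'' $az$, $bz$, $ab$. This gives a short, completely explicit list of candidate local configurations to examine, exactly in the style of Figures \ref{pic5} and \ref{pic6}.

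First I would dispose of the case in which $x$ and $y$ have \emph{two} common neighbours, i.e. $a=b$. Then $\{x,z,y,a\}$ induces either a $C_4$ (when $z\not\sim a$) or a diamond $K_4-xy$ (when $z\sim a$). Neither of these has $\lambda_2>1$ nor two eigenvalues below $-1$, so Proposition \ref{Prop2,3} is silent; instead I would fall back on the global invariants fixed by cospectrality with $F_n$, namely $|E(G)|=3n$, $N_G(C_3)=n$, and the count of closed $5$-walks from Lemma \ref{4}. In the diamond the two triangles $xza$ and $yza$ already share the edge $za$, which over-commits the friendship-type triangle budget, and this is where one derives the contradiction that no pair of degree-$2$ vertices can have two common neighbours.

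For the case of a single common neighbour ($a\neq b$) I would run through the $2^3$ chord patterns and compute $\lambda_2$ of each resulting induced subgraph, appealing to Proposition \ref{Prop2,3}. Several patterns are immediate: if $ab$ is present but $az,bz$ are absent, the five vertices span an induced $C_5$, whose two eigenvalues below $-1$ make it forbidden; the patterns carrying enough chords to push $\lambda_2>1$ are ruled out the same way, just as $A_2,A_3,A_4$ and $B_1,B_2$ were eliminated in Lemma \ref{Tx}.

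The step I expect to be genuinely delicate is the \emph{bowtie} pattern, where exactly $az$ and $bz$ are present, so that $\{a,x,z,y,b\}$ induces a copy of $F_2$. Here $\lambda_2(F_2)=1$ and $F_2$ has only one eigenvalue less than $-1$, so interlacing via Proposition \ref{Prop2,3} gives nothing; the chordless $P_5$ pattern is equally resistant, since $\lambda_2(P_5)=1$ as well. This is the essential difficulty: the bowtie is precisely the neighbourhood of the centre of $F_n$, so it cannot be forbidden by any purely local argument, and the resolution must instead be global. Concretely, I would combine the degree identity $\sum_{i}t_i\deg_G(v_i)=2n^2+4n$ from \eqref{three} with $\deg_G(x)=\deg_G(y)=2$ and $N_G(C_3)=n$ to argue that a vertex $z$ simultaneously carrying two degree-$2$ neighbours in two \emph{distinct} triangles can occur only when $z$ is a single dominating centre, at which point the analysis collapses back onto $F_n$. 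Separating this bowtie case cleanly from the $P_5$ case, and making the counting force the dominating-centre conclusion rather than merely bounding edges, is where the real work and the real risk of the argument lie.
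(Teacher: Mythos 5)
Your reading of the statement is off, and that misreading drives the whole proposal into terrain the lemma never enters. Despite its garbled wording, Lemma \ref{lxy} does not claim that non-adjacent degree-$2$ vertices have \emph{no} common neighbour at all --- that claim is false for a cospectral mate of $F_n$, since $F_n$ itself contains non-adjacent degree-$2$ vertices sharing the centre; this is exactly the bowtie obstruction you correctly identified as unkillable by local means. What the lemma asserts, as its proof and its later use make clear (in Theorem \ref{llxy} it is invoked precisely to justify ``we can assume that $a\neq b$''), is that $x$ and $y$ cannot have \emph{two} common neighbours, i.e.\ $N(x)=N(y)=\{a,b\}$ as in Figure \ref{pic7}. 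Your entire single-common-neighbour analysis --- the eight chord patterns on $\{a,x,z,y,b\}$, the induced $C_5$, the bowtie and $P_5$ cases, and the proposed global rescue via \eqref{three} --- addresses a statement that is both not the lemma and unprovable as stated; that configuration is treated separately in Theorem \ref{llxy}, under the additional hypothesis that $G$ is $P_2\times P_3$-free, and there the conclusion is $G\cong F_n$, not a contradiction.

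Even restricted to the case the lemma actually concerns (your case $a=b$, two common neighbours), the proposal has a genuine gap. You rightly observe that the induced $C_4$ and the diamond both escape Proposition \ref{Prop2,3}, but your fallback --- that two triangles sharing an edge ``over-commit the friendship-type triangle budget'' --- is not an argument: none of the invariants $|E(G)|=3n$, $N_G(C_3)=n$, or the closed-$5$-walk count of Lemma \ref{4} forbids a pair of triangles sharing an edge, and you never derive the promised contradiction. The key idea you are missing is linear-algebraic, not combinatorial: since $d_G(x)=d_G(y)=2$, $x\not\sim y$, and $N(x)=N(y)$, the rows of $A(G)$ indexed by $x$ and $y$ are identical, so $A(G)$ is singular and $0\in \mathrm{Spec}(G)=\mathrm{Spec}(F_n)$; but the product of the eigenvalues of $F_n$ is $\pm 2n\neq 0$, a contradiction. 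This one-line argument (the paper's proof) needs no case split on whether the common neighbours are themselves adjacent --- it disposes of the $C_4$ and diamond configurations simultaneously --- and it uses cospectrality exactly once, through $\det A(F_n)\neq 0$.
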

\begin{proof}
By contrary, suppose $x$ and $y$ have common neighbours, say $\{a,b\}$. So, we have the graph of Figure \ref{pic7} as a subgraph of $G$. Suppose the adjacency matrix of $G$ is $A(G)$ and, the first, second, third and fourth rows and columns of $A(G)$ are labelled by vertices $x, y, a$ and $b$, respectively. The two first rows of $A(G)$ are identical, since $d_G(x)=d_G(y)=2$ and they are not adjacent in $G$. Therefore, the dimension of the null space of $A(G)$ is greater than zero. So, $0$ is an eigenvalue of $A(G)$ and it is contradiction with cospectrality of $G$ and $F_n$. This completes the proof.
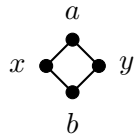
\begin{figure}[htb]
\centering
\begin{tikzpicture}[scale=0.7]
\filldraw [black]
(-0.5,0) circle (3.5 pt)
(0,0.5) circle (3.5 pt)
(0,-0.5) circle (3.5 pt)
(0.5,0) circle (3.5 pt);
\node [label=left:$x$] (x) at (-0.5,0) {};
\node [label=above:$a$] (a) at (0,0.5) {};
\node [label=below:$b$] (b) at (0,-0.5) {};
\node [label=right:$y$] (y) at (0.5,0) {};
\draw[thick] (-0.5,0) -- (0,0.5);
\draw[thick] (-0.5,0) -- (0,-0.5);
\draw[thick] (0,0.5) -- (0.5,0);
\draw[thick] (0,-0.5) -- (0.5,0);
\end{tikzpicture}
\caption{Vertices $\{x,y\}$ are adjacent to $\{a,b\}$ in $G$}\label{pic7}
\end{figure}
\end{proof}
It is known that the Kronecker product of paths $P_2$ and $P_3$, $P_2\times P_3$, is two cycles $C_4$ that has a common edge.

\begin{thm}\label{llxy}
Let $\{x,y\}$ be two non-adjacent vertices of degree 2 in $G$ and $G$ is $P_2\times P_3$-free. If the vertices $x$ and $y$ have at least one common neighbour vertex, then $G$ is isomorphic to $F_n$.
\end{thm}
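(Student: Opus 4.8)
The plan is to reduce the statement to Lemma~\ref{Tx} by producing a triangle of $G$ that carries two vertices of degree~$2$. First I would fix the local picture. Since $x$ and $y$ are non-adjacent vertices of degree~$2$ with a common neighbour, Lemma~\ref{lxy} forbids them from having two common neighbours, so they have exactly one, say $z$. Writing $N(x)=\{z,x'\}$ and $N(y)=\{z,y'\}$, uniqueness of $z$ forces $x',y',z$ to be pairwise distinct with $x'\neq y'$, and also $x'\not\sim y$ and $y'\not\sim x$ (an adjacency $x'\sim y$ would make $x'$ a second common neighbour, and similarly for $y'$). Thus $G$ contains the skeleton path $x'\,\text{--}\,x\,\text{--}\,z\,\text{--}\,y\,\text{--}\,y'$, and the only triangle that can contain $x$ is $\{x,z,x'\}$, which is present exactly when $x'\sim z$; likewise for $y$.

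The heart of the argument is a case analysis on how the rest of $G$ attaches to this skeleton, organised by the three adjacencies $x'\sim z$, $y'\sim z$, $x'\sim y'$ and by the extra neighbours (if any) of $x'$ and $y'$. In each configuration I would try to exhibit an induced subgraph forbidden by Proposition~\ref{Prop2,3}, exactly in the spirit of the exclusion of the graphs $C_i$ and $D_i$ in Figures~\ref{pic5} and~\ref{pic6}: one attaches to the skeleton an arbitrary further triangle of $G$ (which, since $G$ is $(K_3\cup K_3)$-free, must meet an existing triangle) and checks that the resulting induced graph has $\lambda_2>1$ or two eigenvalues below $-1$. This is where the hypothesis that $G$ is $P_2\times P_3$-free is essential and is used in exactly one place: the configuration in which $x$ and $y$ lie in no triangle ($x'\not\sim z$ and $y'\not\sim z$) while $x'$ and $y'$ are joined through a common neighbour $w$ that is also adjacent to $z$ is precisely two $4$-cycles sharing an edge, i.e.\ an induced $P_2\times P_3$; the hypothesis rules it out directly, and interlacing disposes of the remaining ways in which $x',y'$ can fail to sit in a triangle with $z$.

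Once these exclusions are in place, the surviving configuration is that, after possibly swapping the roles of $x$ and $y$, the triangle $\{x,z,x'\}$ is present and its third vertex $x'$ has degree~$2$, every way of giving $x'$ a further neighbour having been eliminated above. Then $\{x,z,x'\}$ is a $K_3$ carrying the two degree-$2$ vertices $x$ and $x'$, and Lemma~\ref{Tx} yields $G\cong F_n$. (If one prefers to avoid Lemma~\ref{Tx}, the same exclusions show $G$ has no $4$-cycle, whence Theorem~\ref{Niki} together with $\lambda_1^2(G)-\lambda_1(G)=2n$ gives $G\cong F_n$; but routing through Lemma~\ref{Tx} keeps the bookkeeping lighter.)

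The main obstacle is precisely this case analysis. The number of local configurations is large, and, as the authors stress in their critique of \cite{c1}, the delicate point when applying the interlacing theorem is to guarantee that the subgraphs one tests are \emph{induced}; one must therefore record all adjacencies among $\{x',x,z,y,y',w\}$, not merely the edges used to build the skeleton, before reading off $\lambda_2$. Producing a clean, non-redundant enumeration up to the evident symmetry exchanging $(x,x')$ with $(y,y')$ is the step that needs the most care.
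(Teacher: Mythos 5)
Your setup coincides with the paper's: by Lemma \ref{lxy} the common neighbour $z$ is unique, so $N(x)=\{z,x'\}$, $N(y)=\{z,y'\}$ with $x'\neq y'$, the skeleton is the path $x'xzyy'$, the possible chords are exactly $x'z$, $y'z$, $x'y'$, and the configurations containing a chord $x'y'$ are killed by two eigenvalues below $-1$, the rest by attaching further structure and interlacing. You also located correctly the one use of the $P_2\times P_3$ hypothesis (a new vertex $w=t$ adjacent to $x'$, $z$ and $y'$ creates two $4$-cycles sharing the edge $tz$). Up to this point you are reproducing the paper's argument.

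The gap is in your endgame. You claim that after the exclusions ``the surviving configuration'' is a triangle $\{x,z,x'\}$ whose third vertex $x'$ has degree $2$, so that Lemma \ref{Tx} finishes. This is false: the configuration in which \emph{no} chord is present (the skeleton is an induced $P_5$) survives your exclusions and contains no triangle at all. In that case $\delta(G)=2$ (Lemma \ref{2}) forces a further neighbour $t$ of $x'$; interlacing rules out $t\sim z$ alone, $P_2\times P_3$-freeness rules out $t$ adjacent to both $z$ and $y'$, but the remaining case $t\sim y'$ only yields an induced $6$-cycle $x'xzyy'tx'$. Since ${\rm Spec}(C_6)=\left\lbrace 2,1,1,-1,-1,-2\right\rbrace$, one has $\lambda_2(C_6)=1$ and only one eigenvalue below $-1$, so Proposition \ref{Prop2,3} cannot exclude it, and there is no triangle for Lemma \ref{Tx} to act on. This is exactly where the paper has to do additional work: it splits on $d_G(y')$; if $d_G(y')=2$ then $y$ and $y'$ are two \emph{adjacent} vertices of degree $2$ and the theorem on two adjacent degree-$2$ vertices applies (the paper's own citation of Lemma \ref{Tx} at this point is inaccurate, but that is the theorem that closes the case), while if $d_G(y')>2$ a further neighbour $f$ of $y'$ is chased into forbidden induced subgraphs. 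Your proposal omits this case entirely, and your parenthetical fallback does not repair it: your exclusions are all local to the vertices around $x$ and $y$, so they cannot show that $G$ has no $4$-cycle elsewhere, which is what an appeal to Theorem \ref{Niki} would require.
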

\begin{proof}
Suppose the common neighbour vertex of two vertices $x$ and $y$ in $G$ is $z$. Also, suppose $x$ and $y$ are adjacent to $a$ and $b$, respectively. By Lemma \ref{lxy}, we can assume that $a\neq b$. So, we have the path $P_5$ with vertices $\{a, x, z, y, b\}$ as a subgraph of $G$. All possible induced subgraph that can be obtained from this $P_5$ are listed in Figure \ref{pic8}. The graphs $E_4, E_5$ and $E_6$ have two negative eigenvalues less than $-1$, so they can not happen in $G$. The vertex $a$ in $E_2$ and $E_3$ must be join to an other vertex in $G$, say $t$. All possible cases for these two graphs with this new edge, are shown in Figure \ref{pic9}. All of them are forbidden subgraph of $G$. So, $E_2$ and $E_3$ can not happen in $G$. Therefore, the only case that can happen is $E_1$. Suppose the vertex $a$ in $E_1$ is adjacent to vertex $t$ of $G$, since the degree of $a$ can not be 1. Now, the vertex $t$ must be adjacent to some vertices of the set $\{z,b\}$, since $G$ is $P_6$-free. It can not be adjacent to the both of $z$ and $b$, since we do not have induced subgraph $P_2\times P_3$. Also, $t$ only is not adjacent to the vertex $z$, since its second largest eigenvalues are greater than 1. The only remaining case is that $t$ be adjacent only to $b$. In this case we have an induced $C_6$ in $G$. If $d_G(b)=2$, then by Lemma \ref{Tx}, $G$ must be isomorphic to $F_n$ and, nothing remain to prove. So, we must show that $d_G(b)$ can not be greater than $2$. But, if $d_G(b)>2$ and $b$ is adjacent to the vertex $f$ of $G$, by interlacing theorem, the vertex $f$ must be adjacent to some vertices of the set $\{z, t, a\}$. But, all the resulted graphs are forbidden in $G$. So, the proof is completed.
\begin{figure}[htb]
\centering
\begin{tikzpicture}[scale=0.7]
\filldraw [black]
(-1,-1) circle (3.5 pt)
(-1,0) circle (3.5 pt)
(0,0) circle (3.5 pt)
(1,-1) circle (3.5 pt)
(1,0) circle (3.5 pt);
\node [label=below:$E_1$] (E_1) at (0,-0.85) {};
\node [label=left:$a$] (a) at (-1,-1) {};
\node [label=above:$x$] (x) at (-1,0) {};
\node [label=above:$z$] (z) at (0,0) {};
\node [label=above:$y$] (y) at (1,0) {};
\node [label=right:$b$] (b) at (1,-1) {};
\draw[thick] (-1,-1) -- (-1,0);
\draw[thick] (-1,0) -- (0,0);
\draw[thick] (0,0) -- (1,0);
\draw[thick] (1,0) -- (1,-1);
\end{tikzpicture}
\begin{tikzpicture}[scale=0.7]
\filldraw [black]
(-1,-1) circle (3.5 pt)
(-1,0) circle (3.5 pt)
(0,0) circle (3.5 pt)
(1,-1) circle (3.5 pt)
(1,0) circle (3.5 pt);
\node [label=below:$E_2$] (E_2) at (0,-0.85) {};
\node [label=left:$a$] (a) at (-1,-1) {};
\node [label=above:$x$] (x) at (-1,0) {};
\node [label=above:$z$] (z) at (0,0) {};
\node [label=above:$y$] (y) at (1,0) {};
\node [label=right:$b$] (b) at (1,-1) {};
\draw[thick] (-1,-1) -- (-1,0);
\draw[thick] (-1,0) -- (0,0);
\draw[thick] (0,0) -- (1,0);
\draw[thick] (1,0) -- (1,-1);
\draw[thick] (-1,-1) -- (0,0);
\draw[thick] (1,-1) -- (0,0);
\end{tikzpicture}
\begin{tikzpicture}[scale=0.7]
\filldraw [black]
(-1,-1) circle (3.5 pt)
(-1,0) circle (3.5 pt)
(0,0) circle (3.5 pt)
(1,-1) circle (3.5 pt)
(1,0) circle (3.5 pt);
\node [label=below:$E_3$] (E_3) at (0,-0.85) {};
\node [label=left:$a$] (a) at (-1,-1) {};
\node [label=above:$x$] (x) at (-1,0) {};
\node [label=above:$z$] (z) at (0,0) {};
\node [label=above:$y$] (y) at (1,0) {};
\node [label=right:$b$] (b) at (1,-1) {};
\draw[thick] (-1,-1) -- (-1,0);
\draw[thick] (-1,0) -- (0,0);
\draw[thick] (0,0) -- (1,0);
\draw[thick] (1,0) -- (1,-1);
\draw[thick] (1,-1) -- (0,0);
\end{tikzpicture}
\begin{tikzpicture}[scale=0.7]
\filldraw [black]
(-1,-1) circle (3.5 pt)
(-1,0) circle (3.5 pt)
(0,0) circle (3.5 pt)
(1,-1) circle (3.5 pt)
(1,0) circle (3.5 pt);
\node [label=below:$E_4$] (E_4) at (0,-0.85) {};
\node [label=left:$a$] (a) at (-1,-1) {};
\node [label=above:$x$] (x) at (-1,0) {};
\node [label=above:$z$] (z) at (0,0) {};
\node [label=above:$y$] (y) at (1,0) {};
\node [label=right:$b$] (b) at (1,-1) {};
\draw[thick] (-1,-1) -- (-1,0);
\draw[thick] (-1,0) -- (0,0);
\draw[thick] (0,0) -- (1,0);
\draw[thick] (1,0) -- (1,-1);
\draw[thick] (1,-1) -- (-1,-1);
\end{tikzpicture}
\begin{tikzpicture}[scale=0.7]
\filldraw [black]
(-1,-1) circle (3.5 pt)
(-1,0) circle (3.5 pt)
(0,0) circle (3.5 pt)
(1,-1) circle (3.5 pt)
(1,0) circle (3.5 pt);
\node [label=below:$E_5$] (E_5) at (0,-0.85) {};
\node [label=left:$a$] (a) at (-1,-1) {};
\node [label=above:$x$] (x) at (-1,0) {};
\node [label=above:$z$] (z) at (0,0) {};
\node [label=above:$y$] (y) at (1,0) {};
\node [label=right:$b$] (b) at (1,-1) {};
\draw[thick] (-1,-1) -- (-1,0);
\draw[thick] (-1,0) -- (0,0);
\draw[thick] (0,0) -- (1,0);
\draw[thick] (1,0) -- (1,-1);
\draw[thick] (1,-1) -- (-1,-1);
\draw[thick] (1,-1) -- (0,0);
\end{tikzpicture}
\begin{tikzpicture}[scale=0.7]
\filldraw [black]
(-1,-1) circle (3.5 pt)
(-1,0) circle (3.5 pt)
(0,0) circle (3.5 pt)
(1,-1) circle (3.5 pt)
(1,0) circle (3.5 pt);
\node [label=below:$E_6$] (E_6) at (0,-0.85) {};
\node [label=left:$a$] (a) at (-1,-1) {};
\node [label=above:$x$] (x) at (-1,0) {};
\node [label=above:$z$] (z) at (0,0) {};
\node [label=above:$y$] (y) at (1,0) {};
\node [label=right:$b$] (b) at (1,-1) {};
\draw[thick] (-1,-1) -- (-1,0);
\draw[thick] (-1,0) -- (0,0);
\draw[thick] (0,0) -- (1,0);
\draw[thick] (1,0) -- (1,-1);
\draw[thick] (1,-1) -- (-1,-1);
\draw[thick] (1,-1) -- (0,0);
\draw[thick] (-1,-1) -- (0,0);
\end{tikzpicture}
\caption{All induced subgraphs of $P_5$ of Lemma \ref{llxy}}\label{pic8}
\end{figure}
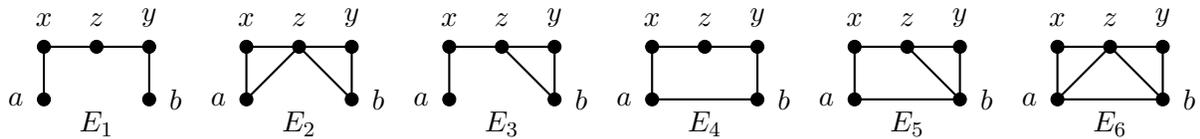
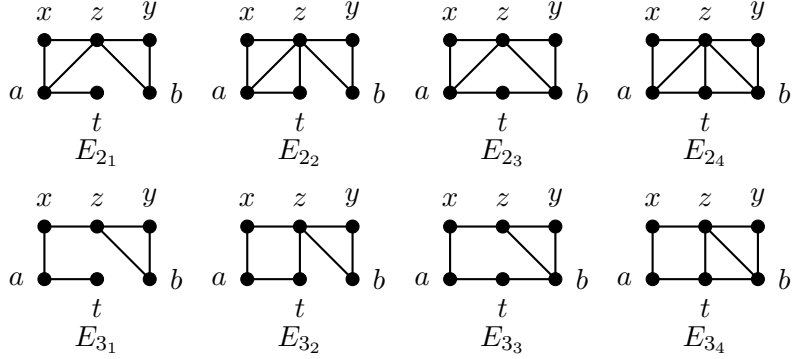
\begin{figure}[htb]
\centering
\begin{tikzpicture}[scale=0.7]
\filldraw [black]
(-1,-1) circle (3.5 pt)
(-1,0) circle (3.5 pt)
(0,0) circle (3.5 pt)
(1,-1) circle (3.5 pt)
(0,-1) circle (3.5 pt)
(1,0) circle (3.5 pt);
\node [label=below:$E_{2_1}$] (E_{2_1}) at (0,-1.5) {};
\node [label=left:$a$] (a) at (-1,-1) {};
\node [label=above:$x$] (x) at (-1,0) {};
\node [label=above:$z$] (z) at (0,0) {};
\node [label=above:$y$] (y) at (1,0) {};
\node [label=right:$b$] (b) at (1,-1) {};
\node [label=below:$t$] (t) at (0,-1) {};
\draw[thick] (-1,-1) -- (-1,0);
\draw[thick] (-1,0) -- (0,0);
\draw[thick] (0,0) -- (1,0);
\draw[thick] (1,0) -- (1,-1);
\draw[thick] (-1,-1) -- (0,0);
\draw[thick] (1,-1) -- (0,0);
\draw[thick] (-1,-1) -- (0,-1);
\end{tikzpicture}
\begin{tikzpicture}[scale=0.7]
\filldraw [black]
(-1,-1) circle (3.5 pt)
(-1,0) circle (3.5 pt)
(0,0) circle (3.5 pt)
(1,-1) circle (3.5 pt)
(0,-1) circle (3.5 pt)
(1,0) circle (3.5 pt);
\node [label=below:$E_{2_2}$] (E_{2_2}) at (0,-1.5) {};
\node [label=left:$a$] (a) at (-1,-1) {};
\node [label=above:$x$] (x) at (-1,0) {};
\node [label=above:$z$] (z) at (0,0) {};
\node [label=above:$y$] (y) at (1,0) {};
\node [label=right:$b$] (b) at (1,-1) {};
\node [label=below:$t$] (t) at (0,-1) {};
\draw[thick] (-1,-1) -- (-1,0);
\draw[thick] (-1,0) -- (0,0);
\draw[thick] (0,0) -- (1,0);
\draw[thick] (1,0) -- (1,-1);
\draw[thick] (-1,-1) -- (0,0);
\draw[thick] (1,-1) -- (0,0);
\draw[thick] (-1,-1) -- (0,-1);
\draw[thick] (0,0) -- (0,-1);
\end{tikzpicture}
\begin{tikzpicture}[scale=0.7]
\filldraw [black]
(-1,-1) circle (3.5 pt)
(-1,0) circle (3.5 pt)
(0,0) circle (3.5 pt)
(1,-1) circle (3.5 pt)
(0,-1) circle (3.5 pt)
(1,0) circle (3.5 pt);
\node [label=below:$E_{2_3}$] (E_{2_3}) at (0,-1.5) {};
\node [label=left:$a$] (a) at (-1,-1) {};
\node [label=above:$x$] (x) at (-1,0) {};
\node [label=above:$z$] (z) at (0,0) {};
\node [label=above:$y$] (y) at (1,0) {};
\node [label=right:$b$] (b) at (1,-1) {};
\node [label=below:$t$] (t) at (0,-1) {};
\draw[thick] (-1,-1) -- (-1,0);
\draw[thick] (-1,0) -- (0,0);
\draw[thick] (0,0) -- (1,0);
\draw[thick] (1,0) -- (1,-1);
\draw[thick] (-1,-1) -- (0,0);
\draw[thick] (1,-1) -- (0,0);
\draw[thick] (-1,-1) -- (0,-1);
\draw[thick] (1,-1) -- (0,-1);
\end{tikzpicture}
\begin{tikzpicture}[scale=0.7]
\filldraw [black]
(-1,-1) circle (3.5 pt)
(-1,0) circle (3.5 pt)
(0,0) circle (3.5 pt)
(1,-1) circle (3.5 pt)
(0,-1) circle (3.5 pt)
(1,0) circle (3.5 pt);
\node [label=below:$E_{2_4}$] (E_{2_4}) at (0,-1.5) {};
\node [label=left:$a$] (a) at (-1,-1) {};
\node [label=above:$x$] (x) at (-1,0) {};
\node [label=above:$z$] (z) at (0,0) {};
\node [label=above:$y$] (y) at (1,0) {};
\node [label=right:$b$] (b) at (1,-1) {};
\node [label=below:$t$] (t) at (0,-1) {};
\draw[thick] (-1,-1) -- (-1,0);
\draw[thick] (-1,0) -- (0,0);
\draw[thick] (0,0) -- (1,0);
\draw[thick] (1,0) -- (1,-1);
\draw[thick] (-1,-1) -- (0,0);
\draw[thick] (1,-1) -- (0,0);
\draw[thick] (-1,-1) -- (0,-1);
\draw[thick] (1,-1) -- (0,-1);
\draw[thick] (0,0) -- (0,-1);
\end{tikzpicture}\\
\begin{tikzpicture}[scale=0.7]
\filldraw [black]
(-1,-1) circle (3.5 pt)
(-1,0) circle (3.5 pt)
(0,0) circle (3.5 pt)
(1,-1) circle (3.5 pt)
(0,-1) circle (3.5 pt)
(1,0) circle (3.5 pt);
\node [label=below:$E_{3_1}$] (E_{3_1}) at (0,-1.5) {};
\node [label=left:$a$] (a) at (-1,-1) {};
\node [label=above:$x$] (x) at (-1,0) {};
\node [label=above:$z$] (z) at (0,0) {};
\node [label=above:$y$] (y) at (1,0) {};
\node [label=right:$b$] (b) at (1,-1) {};
\node [label=below:$t$] (t) at (0,-1) {};
\draw[thick] (-1,-1) -- (-1,0);
\draw[thick] (-1,0) -- (0,0);
\draw[thick] (0,0) -- (1,0);
\draw[thick] (1,0) -- (1,-1);
\draw[thick] (1,-1) -- (0,0);
\draw[thick] (-1,-1) -- (0,-1);
\end{tikzpicture}
\begin{tikzpicture}[scale=0.7]
\filldraw [black]
(-1,-1) circle (3.5 pt)
(-1,0) circle (3.5 pt)
(0,0) circle (3.5 pt)
(1,-1) circle (3.5 pt)
(0,-1) circle (3.5 pt)
(1,0) circle (3.5 pt);
\node [label=below:$E_{3_2}$] (E_{3_2}) at (0,-1.5) {};
\node [label=left:$a$] (a) at (-1,-1) {};
\node [label=above:$x$] (x) at (-1,0) {};
\node [label=above:$z$] (z) at (0,0) {};
\node [label=above:$y$] (y) at (1,0) {};
\node [label=right:$b$] (b) at (1,-1) {};
\node [label=below:$t$] (t) at (0,-1) {};
\draw[thick] (-1,-1) -- (-1,0);
\draw[thick] (-1,0) -- (0,0);
\draw[thick] (0,0) -- (1,0);
\draw[thick] (1,0) -- (1,-1);
\draw[thick] (1,-1) -- (0,0);
\draw[thick] (-1,-1) -- (0,-1);
\draw[thick] (0,0) -- (0,-1);
\end{tikzpicture}
\begin{tikzpicture}[scale=0.7]
\filldraw [black]
(-1,-1) circle (3.5 pt)
(-1,0) circle (3.5 pt)
(0,0) circle (3.5 pt)
(1,-1) circle (3.5 pt)
(0,-1) circle (3.5 pt)
(1,0) circle (3.5 pt);
\node [label=below:$E_{3_3}$] (E_{3_3}) at (0,-1.5) {};
\node [label=left:$a$] (a) at (-1,-1) {};
\node [label=above:$x$] (x) at (-1,0) {};
\node [label=above:$z$] (z) at (0,0) {};
\node [label=above:$y$] (y) at (1,0) {};
\node [label=right:$b$] (b) at (1,-1) {};
\node [label=below:$t$] (t) at (0,-1) {};
\draw[thick] (-1,-1) -- (-1,0);
\draw[thick] (-1,0) -- (0,0);
\draw[thick] (0,0) -- (1,0);
\draw[thick] (1,0) -- (1,-1);
\draw[thick] (1,-1) -- (0,0);
\draw[thick] (-1,-1) -- (0,-1);
\draw[thick] (1,-1) -- (0,-1);
\end{tikzpicture}
\begin{tikzpicture}[scale=0.7]
\filldraw [black]
(-1,-1) circle (3.5 pt)
(-1,0) circle (3.5 pt)
(0,0) circle (3.5 pt)
(1,-1) circle (3.5 pt)
(0,-1) circle (3.5 pt)
(1,0) circle (3.5 pt);
\node [label=below:$E_{3_4}$] (E_{3_4}) at (0,-1.5) {};
\node [label=left:$a$] (a) at (-1,-1) {};
\node [label=above:$x$] (x) at (-1,0) {};
\node [label=above:$z$] (z) at (0,0) {};
\node [label=above:$y$] (y) at (1,0) {};
\node [label=right:$b$] (b) at (1,-1) {};
\node [label=below:$t$] (t) at (0,-1) {};
\draw[thick] (-1,-1) -- (-1,0);
\draw[thick] (-1,0) -- (0,0);
\draw[thick] (0,0) -- (1,0);
\draw[thick] (1,0) -- (1,-1);
\draw[thick] (1,-1) -- (0,0);
\draw[thick] (-1,-1) -- (0,-1);
\draw[thick] (1,-1) -- (0,-1);
\draw[thick] (0,0) -- (0,-1);
\end{tikzpicture}
\caption{All induced subgraphs from $E_2$  and $E_3$ with one pendant at vertex $a$}\label{pic9}
\end{figure}
\end{proof}

%
%
%
%
%

\begin{thm}
The friendship graphs $F_1$, $F_2$ and $F_3$ are \textit{DS}.
\end{thm}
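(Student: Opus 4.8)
The plan is to dispose of $F_1$ at once and to treat $F_2$ and $F_3$ by a single two-step scheme: first eliminate the disconnected possibility permitted by Theorem \ref{Theorem1}, and then settle the connected case with the degree-two machinery of Proposition \ref{29} together with Lemma \ref{lxy}. For $F_1$ there is nothing to prove, since a graph cospectral with $F_1=K_3$ has exactly $3$ vertices, $3$ edges and no isolated vertex, and the only simple graph with these data is $K_3$ itself.

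For the disconnected case, suppose $G$ is cospectral with $F_n$ ($n\in\{2,3\}$) and, invoking Theorem \ref{Theorem1}, write $G=mK_2\cup G_1$ with $m\ge 1$ and $G_1$ connected. Matching vertex and edge counts restricts $m$ severely. When $n=2$, the choice $m=1$ would force $G_1$ to have $3$ vertices and $5$ edges and $m=2$ would force $1$ vertex and $4$ edges, both impossible, so $G$ is connected. When $n=3$, only $m=1$ is dimensionally admissible, giving $G_1$ on $5$ vertices with $8$ edges; but then $\mathrm{Spec}(G_1)=\mathrm{Spec}(F_3)\setminus\{1,-1\}$ has largest eigenvalue $3$, while the standard bound that the largest eigenvalue of a graph is at least its average degree gives $\lambda_{\max}(G_1)\ge \tfrac{2\cdot 8}{5}=3.2>3$, a contradiction. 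Hence in both cases $G$ is connected.

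For the connected case I would run a uniform argument driven by the scarcity of vertices of large degree. By Proposition \ref{29}(ii), the number $d_2(G)$ of degree-two vertices satisfies $d_2(G)\ge 1+\lambda_{\max}$, which yields $d_2(G)\ge 4$ for both values of $n$ (since $\lambda_{\max}(F_2)=\tfrac{1+\sqrt{17}}{2}\approx 2.56$ rounds up past $4$, and $\lambda_{\max}(F_3)=3$). Let $S$ be the set of degree-two vertices, so $|S|\ge 4$ and $V(G)\setminus S$ has at most $|V(G)|-4\le 3$ vertices. Suppose, toward a contradiction, that $S$ is independent; then every vertex of $S$ has both of its neighbours inside $V(G)\setminus S$. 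If $|V(G)\setminus S|\le 1$ this is outright impossible; if $|V(G)\setminus S|=2$, each vertex of $S$ is joined to both outside vertices, so any two vertices of $S$ share the same neighbourhood; and if $|V(G)\setminus S|=3$, there are only $\binom{3}{2}=3$ possible neighbour-pairs for the $\ge 4$ vertices of $S$, so by the pigeonhole principle two of them share the same neighbourhood. In each surviving subcase we obtain two non-adjacent degree-two vertices with identical neighbourhoods, which by (the proof of) Lemma \ref{lxy} makes two rows of $A(G)$ equal, forcing the eigenvalue $0$ and contradicting cospectrality with $F_n$. Therefore $S$ is not independent, i.e.\ $G$ has two adjacent vertices of degree two, and by the earlier theorem asserting that a cospectral mate of $F_n$ with two adjacent degree-two vertices is isomorphic to $F_n$, we conclude $G\cong F_n$.

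The main obstacle is the connected case, and specifically the counting that converts ``few vertices of large degree'' into a genuine contradiction: Proposition \ref{29}(ii) guarantees at least four degree-two vertices while leaving at most three vertices to absorb all of their edges, and the point is that an independent family of degree-two vertices cannot be accommodated in so small a complementary set without two of them acquiring identical neighbourhoods. I expect the only delicate bookkeeping to be the integer rounding $d_2(G)\ge 4$ for $F_2$ (where $1+\lambda_{\max}$ is irrational) and the observation that the disconnected branch for $F_3$ is defeated by the average-degree inequality rather than by a bare edge count.
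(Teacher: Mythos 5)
Your proof is correct, but in the connected case it takes a genuinely different route from the paper's. The paper, after forcing connectivity via Proposition \ref{29}(i), observes that for $n\le 3$ a cospectral mate is too small to contain a subdivision of $K_5$ or $K_{3,3}$, hence is planar, and then invokes Corollary \ref{trian} --- that is, the Euler-formula triangulation argument of Proposition \ref{tria} combined with Nikiforov's inequality (Theorem \ref{Niki}) --- to conclude $G\cong F_n$. You avoid planarity and Nikiforov entirely: you re-derive connectivity by bare counting (for $F_2$) and by the average-degree bound $\lambda_{\max}(G_1)\ge 2|E(G_1)|/|V(G_1)|$ applied to the putative non-$K_2$ component (for $F_3$), which is exactly the inequality hidden inside Proposition \ref{29}(i); and in the connected case you combine Proposition \ref{29}(ii) (so $d_2(G)\ge 1+\lambda_{\max}$, hence $d_2(G)\ge 4$ by integrality) with a pigeonhole argument: at most three vertices lie outside the set $S$ of degree-two vertices, so if $S$ were independent, two of its members would have identical neighbourhoods, making $A(G)$ singular, which contradicts the fact that the product of the eigenvalues of $F_n$ is $\pm 2n\ne 0$ (the Lemma \ref{lxy} device); this forces two adjacent degree-two vertices, and the theorem following Lemma \ref{Tx} finishes. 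All the results you cite precede this theorem in the paper, and each numerical step checks out ($m=1$ for $F_3$ gives $\lambda_{\max}(G_1)=3<3.2$; $\binom{3}{2}=3<4$). What each approach buys: the paper's is shorter on the page and isolates the smallness of $n$ in a single planarity check, resting on the comparatively clean Theorem \ref{Niki}; yours trades that for the heavier forbidden-subgraph theorem on adjacent degree-two vertices, but its counting is completely elementary and it makes transparent exactly where small $n$ is needed --- the pigeonhole condition $\binom{|V(G)|-d_2(G)}{2}<d_2(G)$ already fails at $n=4$, just as the paper's automatic planarity does.
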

\begin{proof}
The graph $F_1$ is isomorphic to graph $K_3$, and $K_3$ is \textit{DS}. Suppose $G$ is cospectral with $F_2$, then by Theorem \ref{Theorem1} and part $(i)$ of Proposition \ref{29}, $G$ must be connected. So $G$ is connected and planar, since it does not have $K_5$ or $K_{3,3}$ as a subgraph. Therefore, by Corollary \ref{trian}, $G$ is isomorphic to $F_2$. Lastly, we prove that $F_3$ is \textit{DS}. Suppose $G$ is cospectral with $F_3$. By part $(i)$ of Proposition \ref{29}, $G$ must be connected. Also, $G$ does not have graph $K_5$ or $K_{3,3}$ as a subgraph. So by Corollary \ref{trian}, $G$ is isomorphic to $F_3$ and this completes the proof.
\end{proof}

\section{\bf  Complement of Cospectral Mate of Friendship Graph}
\vskip 0.4 true cm
In this section, we study the complement of graph $G$, where $G$ is cospectral with friendship graph $F_n$. Also,
we completely shown that the complement of friendship graph is $DS$. So, if there are some graphs that they are $\mathbb{R}$-cospectral with friendship graph, then they must be isomorphic to $F_n$. Another advantage of this section is; if we accept that $G$ is connected and induced $P_4$ free, then $G$ has a dominating vertex and it is isomorphic to friendship graph.

\begin{lem}\label{Lemma2}
Let $G$ be a cospectral graph with $F_n$ for some $n>2$. Then $\overline{G}$ is either connected or it is the disjoint union of $K_1$ and a connected graph.
\end{lem}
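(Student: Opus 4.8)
The plan is to prove the dichotomy by a single edge count, using only that a graph cospectral with $F_n$ has the same number of vertices, $2n+1$, and the same number of edges, $3n$, as $F_n$. If $\overline{G}$ is connected there is nothing to prove, so I would assume $\overline{G}$ is disconnected and let its connected components have vertex sets $W_1,\dots,W_k$ with $k\ge 2$, writing $c_i=|W_i|$, so that $\sum_{i=1}^k c_i=2n+1$. The goal is then to show that the multiset $\{c_1,\dots,c_k\}$ can only be $\{2n,1\}$.

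Fix an index $i$. Because $W_i$ is a union of components of $\overline{G}$ (in fact a single one), there are no edges of $\overline{G}$ between $W_i$ and $V(G)\setminus W_i$; equivalently, in $G$ every vertex of $W_i$ is adjacent to every vertex outside $W_i$, i.e. $G=G[W_i]\nabla G[V(G)\setminus W_i]$. Consequently $G$ contains all $c_i(2n+1-c_i)$ edges joining $W_i$ to its complement, and therefore $c_i(2n+1-c_i)\le |E(G)|=3n$. The map $c\mapsto c(2n+1-c)$ is a downward parabola, so on the integer range $\{2,\dots,2n-1\}$ it attains its minimum at the endpoints, where its value is $2(2n-1)=4n-2$. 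Since $n>2$ we have $4n-2>3n$, so no $c_i$ can lie in $\{2,\dots,2n-1\}$; hence $c_i\in\{1,2n\}$ for every $i$. The delicate point—and the only place the hypothesis $n>2$ is used—is precisely this strict inequality: for $n=2$ a part of size $2$ gives equality, which is exactly why the statement is restricted to $n>2$.

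It remains to assemble the sizes. Since each $c_i\in\{1,2n\}$, they sum to $2n+1$, and $k\ge 2$, the only options are that the sizes are $\{2n,1\}$ or that every $c_i=1$. In the latter case $\overline{G}$ is edgeless, so $G\cong K_{2n+1}$, which has $\binom{2n+1}{2}=n(2n+1)$ edges, more than $3n$ for $n>2$—a contradiction. Hence $\overline{G}$ has exactly two components, one of them a single vertex $K_1$ and the other the connected component on $2n$ vertices, which is the assertion. I expect no real obstacle here: the entire argument reduces to the inequality $c_i(2n+1-c_i)\le 3n$, and one should simply note that this same computation also disposes of the case in which $G$ itself is disconnected, for then $G$ cannot be written as a join and $\overline{G}$ is automatically connected.
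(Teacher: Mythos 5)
Your proof is correct and follows essentially the same route as the paper: a disconnected $\overline{G}$ forces a complete bipartite join structure in $G$, and the edge count $c(2n+1-c)\le 3n$ together with $n>2$ forces the part sizes to be $1$ and $2n$. The only difference is cosmetic — you run the count over every component and then explicitly rule out the all-singleton case via $K_{2n+1}$, an assembly step the paper leaves implicit.
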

\begin{proof}
Since $F_n$ has $3n$ edges, $G$ has the same number of edges and so $\overline{G}$ has $n(2n+1)-3n=n(2n-2)$ edges.
If $\overline{G}$ were disconnected, then  the vertex set of $\overline{G}$ is partitioned into two parts of sizes
 $k_1$ and $k_2$ such that there is no edges between any two vertices of these two parts. So $K_{k_1,k_2}$ is a subgraph
  of $G$ and it follows that $3n\geq k_1 k_2$. Without loss of generality we may assume that $k_2\geq k_1$. Since $n>2$ and $k_1+k_2=2n+1$, it follows that $k_1=1$ and $k_2=2n$. This completes the proof.
\end{proof}
\begin{thm}
Let $G$ be a cospectral graph with $F_n$ for some $n>2$. If $\overline{G}$ is disconnected, then $G$ is isomorphic to $F_n$.
\end{thm}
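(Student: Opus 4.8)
The plan is to first use Lemma \ref{Lemma2} to convert the disconnected-complement hypothesis into the presence of a universal vertex. Since $n>2$, if $\overline{G}$ is disconnected then $\overline{G}=K_1\cup H$ with $H$ connected, so the isolated vertex of $\overline{G}$ corresponds to a vertex $v$ of $G$ adjacent to every other vertex. Thus $G=K_1\nabla G'$, where $G'=G\setminus\{v\}$ has $2n$ vertices. Because $G$ has $3n$ edges and $v$ accounts for $2n$ of them, $G'$ has exactly $n$ edges, so $\sum_{w\ne v}\deg_{G'}(w)=2n$. The goal then becomes to show $G'\cong nK_2$, since that yields $G=K_1\nabla nK_2\cong F_n$.

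Next I would exploit that the number of closed walks of length $4$, namely $\mathrm{tr}\bigl(A(G)^4\bigr)=\sum_i\lambda_i^4$, is determined by the spectrum (the first lemma of Section 2), hence is equal for $G$ and $F_n$. Combining this with the standard identity $\mathrm{tr}(A^4)=8N_G(C_4)+2\sum_v d_v^2-2|E(G)|$ and the value $\mathrm{tr}(A(F_n)^4)=8n^2+10n$, read off from the spectrum of $F_n$ together with $N_{F_n}(C_4)=0$, gives one relation between $N_G(C_4)$ and $\sum_v d_v^2$. The universal vertex is what makes $\sum_v d_v^2$ computable: since $\deg_G(v)=2n$ and $\deg_G(w)=\deg_{G'}(w)+1$ for each $w\ne v$, one obtains $\sum_v d_v^2=4n^2+6n+\sum_{w\ne v}\deg_{G'}(w)^2$. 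Substituting this in collapses the relation to the single clean identity $\sum_{w\ne v}\deg_{G'}(w)^2=2n-4N_G(C_4)$.

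The finish is a two-sided squeeze. As $N_G(C_4)\ge 0$, the identity gives $\sum_{w}\deg_{G'}(w)^2\le 2n$. On the other hand, the Cauchy--Schwarz inequality applied to the $2n$ degrees of $G'$, which sum to $2n$, yields $\sum_w\deg_{G'}(w)^2\ge (2n)^2/(2n)=2n$. Both inequalities must therefore be equalities: $N_G(C_4)=0$, and by the equality case of Cauchy--Schwarz every vertex of $G'$ has degree exactly $1$. Hence $G'$ is $1$-regular on $2n$ vertices, i.e. a perfect matching $nK_2$, and $G=K_1\nabla nK_2\cong F_n$. Alternatively, once $N_G(C_4)=0$ is established one may finish through Theorem \ref{Niki}, since $\lambda_1^2-\lambda_1=2n=|V(G)|-1$ forces every two vertices to have exactly one common neighbour.

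The main obstacle, and the reason a naive ``count the $4$-cycles spectrally'' argument fails, is that $\sum_v d_v^2$ (equivalently the number of paths of length two) is \emph{not} determined by the spectrum alone, so $N_G(C_4)$ is a priori not a spectral invariant. The structural input coming from the universal vertex is precisely what pins $\sum_v d_v^2$ down, up to the unknown quantity $\sum_{w}\deg_{G'}(w)^2$, and it is the interaction of the resulting identity with the Cauchy--Schwarz bound that turns this remaining freedom into the rigid conclusion $G\cong F_n$.
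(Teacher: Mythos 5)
Your proof is correct, and after the shared opening move it takes a genuinely different route from the paper's. Both arguments begin with Lemma \ref{Lemma2}: $\overline{G}=K_1\cup\overline{L}$, so $G$ is the join of a universal vertex with a graph $L$ on $2n$ vertices and $n$ edges, and one must show $L\cong nK_2$. The paper proceeds by a dichotomy on $L$: either every vertex of $L$ has degree $1$ (and then $G\cong F_n$), or $L$ has an isolated vertex $v$, in which case $v$ becomes a pendant vertex of $G$ hanging at the universal vertex $w$; the recurrence $P_G(x)=xP_{G\setminus\{v\}}(x)-P_{G\setminus\{v,w\}}(x)$ evaluated at $x=0$ then forces $|P_G(0)|=|P_{G\setminus\{v,w\}}(0)|\le 1$, because $G\setminus\{v,w\}$ is a forest or has an isolated vertex, contradicting $|P_G(0)|=\prod_i|\lambda_i|=2n$. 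You avoid any case split: the spectral invariance of $\operatorname{tr}(A^4)$, the identity $\operatorname{tr}(A^4)=8N_G(C_4)+2\sum_v d_v^2-2|E(G)|$, and the degree bookkeeping at the universal vertex yield $\sum_{w}\deg_{L}(w)^2=2n-4N_G(C_4)$ (your constants check out: $\operatorname{tr}(A(F_n)^4)=8n^2+10n$), and the Cauchy--Schwarz squeeze then forces $N_G(C_4)=0$ and $1$-regularity of $L$ simultaneously. As for what each approach buys: the paper's determinant argument is shorter once one spots the dichotomy and the pendant-edge trick, but it is special to that configuration and yields nothing beyond the contradiction; your argument is uniform and case-free, makes transparent why the universal vertex is the indispensable input (since $\sum_v d_v^2$ is not a spectral invariant, the join structure is what pins it down), and delivers $N_G(C_4)=0$ as a by-product, which ties the result to Theorem \ref{Niki} and supports your alternative ending via its equality case. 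Both are complete proofs of the statement.
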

\begin{proof}
By Lemma \ref{Lemma2}, $\overline{G}$ has two connected components $\overline{L}$ and $\overline{K}$, where $\overline{K}$ has only one vertex. It follows that the complement $L$ of $\overline{L}$ has $2n$ vertices and $n$ edges. If every vertex of $L$ has degree $1$ in $L$, then $L$ is the disjoint union of $n$ complete graphs $K_2$. In the latter case, $G$ will be isomorphic to $F_n$, since $G$ is the join of $L$ and $K$ which is the same graph $F_n$. Now suppose for a contradiction that there exists an isolated vertex $v$ of $L$. In the latter case, $G$ has a pendant edge at the sole vertex $w$ of $\overline{K}$. Now Theorem 2.2.1 of \cite{a11} implies that $$P_G(x)=xP_{G\setminus\{v\}}(x)-P_{G\setminus\{v,w\}}(x),$$
It follows that $P_G(0)=P_{G\setminus\{v,w\}}(0)$ and since $G\setminus\{v,w\}$ is a forest or has isolated vertex,  $|P_G(0)|$ must be $0$ or $1$. On the other hand, $|P_G(0)|$ is equal to the absolute value of the product of all eigenvalues of $G$, which is $2n$. This is a contradiction which completes the proof.
\end{proof}
\begin{lem}
Let $G$ be a graph cospectral with $F_n$ for some $n>2$. Then the eigenvalues of the complement $\overline{G}$ of $G$ are $-2$, $0$ and the roots of the following polynomial
$$x^4+(4-2n)x^3+(4-4n)x^2+(4bn^2+4cn^2-2cn+2bn-2c)x+8cn^2-4cn-4c,$$
where $b$ and $c$ are non-negative real numbers such that $b+c\leq 1$.
\end{lem}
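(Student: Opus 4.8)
The plan is to express the characteristic polynomial of $\overline{G}$ through that of $G$ together with the interaction of the all-ones vector $\mathbf{1}$ with the eigenspaces of $G$, since the complement spectrum is genuinely not determined by the spectrum of $G$ alone (this is exactly why two free parameters will appear). Writing $N=2n+1$ and $A=A(G)$, we have $A(\overline G)=J-I-A$, so $P_{\overline G}(x)=\det\big((x+1)I+A-J\big)$. Since $J=\mathbf{1}\mathbf{1}^{\top}$, the matrix determinant lemma gives
$$P_{\overline G}(x)=\det\big((x+1)I+A\big)\Big(1-\mathbf{1}^{\top}\big((x+1)I+A\big)^{-1}\mathbf{1}\Big)=(-1)^{N}P_G(-x-1)\Big(1-\sum_{\mu}\frac{N\beta_\mu^2}{x+1+\mu}\Big),$$
where the sum runs over the distinct eigenvalues $\mu$ of $G$ and $N\beta_\mu^2=\lVert E_\mu\mathbf{1}\rVert^2$ is the squared length of the projection of $\mathbf{1}$ onto the $\mu$-eigenspace (so $\beta_\mu$ are the main angles of $G$).

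Next I would substitute the cospectral data. Since $G$ is cospectral with $F_n$, we have $P_G=P_{F_n}=(x+1)(x^2-1)^{n-1}(x^2-x-2n)$, and a direct evaluation at $t=-x-1$ gives
$$(-1)^{N}P_G(-x-1)=x^{n}(x+2)^{n-1}\big(x^2+3x+2-2n\big).$$
The four distinct eigenvalues are $\lambda_\pm=\tfrac{1}{2}\pm\tfrac{1}{2}\sqrt{1+8n}$ (simple) together with $1$ and $-1$ (of multiplicities $n-1$ and $n$), and the key identity $x^2+3x+2-2n=(x+1+\lambda_+)(x+1+\lambda_-)$ lets the two simple-eigenvalue terms combine cleanly with this quadratic factor. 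Carrying out the multiplication term by term, the poles of the bracket at $x=0$ (from $\mu=-1$) and at $x=-2$ (from $\mu=1$) each cancel one factor from the prefactor, so that
$$P_{\overline G}(x)=x^{n-1}(x+2)^{n-2}\,R(x),$$
with $R$ a monic quartic. This already exhibits $0$ and $-2$ as eigenvalues of $\overline G$ (of multiplicities $n-1$ and $n-2$), the remaining four eigenvalues being the roots of $R$, and the degrees add up to $N$ as they must.

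Finally I would identify the coefficients of $R$. Put $b=\beta_1^2$ and $c=\beta_{-1}^2$. Because $\sum_\mu\beta_\mu^2=1$ (from $\mathbf{1}^{\top}\mathbf{1}=N$) and $\sum_\mu\mu\,\beta_\mu^2=\tfrac{1}{N}\mathbf{1}^{\top}A\mathbf{1}=\tfrac{2|E(G)|}{N}=\tfrac{6n}{2n+1}$ (cospectrality fixes $|E(G)|=3n$, as the edge count is read off from the spectrum), and because $\lambda_++\lambda_-=1$, the only combinations of the two unknown simple-eigenvalue angles that actually appear, namely $\beta_+^2+\beta_-^2$ and $\lambda_+\beta_+^2+\lambda_-\beta_-^2$, are pinned down by these two linear relations. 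Eliminating them and matching the coefficients of $R$ yields exactly
$$R(x)=x^4+(4-2n)x^3+(4-4n)x^2+\big(4bn^2+4cn^2-2cn+2bn-2c\big)x+8cn^2-4cn-4c,$$
where the $x^3$ and $x^2$ coefficients come out free of $b,c$ while the constant term $4c(2n+1)(n-1)$ and the linear term confirm the reading $c=\beta_{-1}^2$, $b=\beta_1^2$. Since $b,c$ are squared projection lengths they are non-negative, and $b+c\le\sum_\mu\beta_\mu^2=1$, giving the stated constraint. The main obstacle is organizational rather than conceptual: one must run the partial-fraction bookkeeping carefully, and the one genuine insight is that the unknown angles at the two simple eigenvalues enter only through the two symmetric functions that are forced by the vertex- and edge-count relations, which is precisely why the complement's spectrum is captured by exactly two free parameters $b$ and $c$.
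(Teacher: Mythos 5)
Your proposal is correct and follows essentially the same approach as the paper: the paper invokes the identical main-angle formula for $P_{\overline{G}}(x)$ (citing Proposition 2.1.3 of \cite{a11} rather than rederiving it via the matrix determinant lemma), uses the same two identities $\sum_i \beta_i^2 = 1$ and $\sum_i \mu_i\beta_i^2 = 6n/(2n+1)$, and sets $b=\beta_2^2$, $c=\beta_3^2$ exactly as you do. The only difference is one of self-containedness: you derive the cited tools and spell out the bookkeeping (the cancellation of the poles at $x=0$ and $x=-2$ against the prefactor $x^n(x+2)^{n-1}(x^2+3x+2-2n)$, and the fact that the two unknown simple-eigenvalue angles enter only through the two symmetric combinations pinned down by the vertex and edge counts), which the paper compresses into ``one may simplify.''
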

\begin{proof}
By \cite[Proposition 2.1.3]{a11},
$$P_{\overline{G}}(x)=(-1)^{2n+1}P_G(-x-1) \left(1-(2n+1) \sum_{i=1}^4 \frac{\beta_i^2}{x+1+\mu_i}\right), \eqno{(1)}$$
where $\mu_1=\frac{1+\sqrt{8n+1}}{2}$, $\mu_2=1$, $\mu_3=-1$, $\mu_4=\frac{1-\sqrt{8n+1}}{2}$ and $\beta_1,\beta_2,\beta_3,\beta_4$ are the main angles of $G$, see \cite[page 15]{a11}. We know $$\beta_1^2+\beta_2^2+\beta_3^2+\beta_4^2=1, \eqno{(2)}$$ see \cite[page 15]{a11}, and it follows from \cite[Theorem 1.3.5]{a11} that
$$6n=(2n+1)\left(\mu_1 \beta_1^2+\beta_2^2-\beta_3^2+\mu_4 \beta_4^2\right). \eqno{(3)}$$
Now let $b:=\beta_2^2$ and $c:=\beta_3^2$. Then using  identities (2) and (3), one may simplify  $P_{\overline{G}}(x)$ given  in (1) as a product of the polynomial given in the statement of the lemma and some positive powers of  polynomials $x$ and $x+2$. This completes the proof.
\end{proof}

It is well known that the minimal non-isomorphic cospectral graphs are $G_1=C_4\cup K_1$ and $G_2=K_{1,4}$, where
$G_1=\overline{F_2}$ and $G_2$ is complete bipartite graph. So, we can see that $\overline{F_2}$ is not $DS$. The natural question is; what happen for the complement of remaining friendship graphs? We answer to this question in the next theorem.
\begin{thm}
Let $\overline{F_n}$ denote the complement of friendship graph $F_n$. Then for $n\geq 3$, $\overline{F_n}$ is $DS$.
\end{thm}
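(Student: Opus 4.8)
The plan is to identify $\overline{F_n}$ explicitly, read off its spectrum, and then show that any cospectral graph is forced into the same structure. Since $F_n=K_1\nabla(nK_2)$ and complementation turns a join into a disjoint union, we have $\overline{F_n}=\overline{K_1}\cup\overline{nK_2}=K_1\cup K_{n\times 2}$, where $K_{n\times 2}$ is the cocktail-party graph (the complete $n$-partite graph with all parts of size $2$). As $nK_2$ is $1$-regular with spectrum $\{1^{n},(-1)^{n}\}$, the complementation formula for regular graphs gives $\mathrm{Spec}(K_{n\times2})=\{(2n-2)^1,0^{n},(-2)^{n-1}\}$, and adjoining the isolated vertex yields
$$\mathrm{Spec}(\overline{F_n})=\{(2n-2)^1,\ 0^{\,n+1},\ (-2)^{\,n-1}\}.$$
In particular $\overline{F_n}$ has $2n+1$ vertices, least eigenvalue exactly $-2$, and precisely one positive eigenvalue.

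Now let $H$ satisfy $\mathrm{Spec}(H)=\mathrm{Spec}(\overline{F_n})$. The first key step is that $H$ has exactly one positive eigenvalue, so by the classical theorem of Smith (a connected graph has a single positive eigenvalue if and only if it is complete multipartite) $H$ must be a complete multipartite graph together with some isolated vertices, say $H=K_{n_1,\dots,n_t}\cup sK_1$ with $t\ge 2$; indeed, two edge-bearing components would contribute at least two positive eigenvalues. The number of parts is then pinned down: a complete multipartite graph on $t$ parts has inertia $(1,\cdot,t-1)$, so $H$ has exactly $t-1$ negative eigenvalues, and comparing with the $n-1$ eigenvalues equal to $-2$ forces $t=n$ (equivalently, the multiplicity $n+1$ of the eigenvalue $0$ equals $(2n+1)-t$).

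The second key step extracts the part sizes from the exact value $-2$. The nonzero eigenvalues of $K_{n_1,\dots,n_n}$ coincide (with multiplicities) with the eigenvalues of the $n\times n$ quotient matrix $B=\mathbf 1\mathbf n^{\mathsf T}-D$ of the equitable partition into parts, where $\mathbf n=(n_1,\dots,n_n)^{\mathsf T}$ and $D=\mathrm{diag}(n_1,\dots,n_n)$; the remaining eigenvalues of $H$ are all $0$. Hence $B$ has spectrum $\{(2n-2)^1,(-2)^{n-1}\}$, so $C:=B+2I$ has rank one. Writing out $C$, one finds $C_{ii}=2$ and $C_{ij}=n_j$ for $i\neq j$, so every row of $C$ is nonzero; as $C$ has rank one, any two rows are scalar multiples of each other. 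Fix rows $i\neq k$; since $n=t\ge 3$ there is a third index $j\notin\{i,k\}$, and both rows have $j$-th entry $n_j>0$, forcing the proportionality constant to be $1$, so rows $i$ and $k$ are equal. Comparing the $i$-th entries ($2=n_i$) and the $k$-th entries ($n_k=2$) gives $n_i=n_k=2$. As $i,k$ were arbitrary, every part has size $2$, whence $\sum_i n_i=2n$ and $s=(2n+1)-2n=1$. Therefore $H=K_{n\times 2}\cup K_1=\overline{F_n}$.

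The main obstacle is precisely this last step. Matching the trace invariants alone — for instance $\sum_{i\neq j}n_in_j=\mathrm{tr}(B^2)=4n(n-1)$ — does \emph{not} force the parts to have equal size, so it is essential to use the full spectral data, namely that the single value $-2$ accounts for \emph{all} $n-1$ negative eigenvalues, via the rank-one condition on $B+2I$. The hypothesis $n\ge 3$ enters exactly here, to supply the spare third coordinate; this is consistent with the failure of $\overline{F_2}=C_4\cup K_1$ to be DS noted earlier.
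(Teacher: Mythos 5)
Your proof is correct, but it takes a genuinely different route from the paper's. The paper splits into cases according to whether the cospectral mate $G$ is connected: in the connected case it notes $G$ would be a non-regular, non-bipartite connected graph with three distinct eigenvalues and invokes van Dam's classification of nonregular connected graphs with three eigenvalues, checking that none of the finitely many exceptional graphs (cone over Petersen, Chang graphs, etc.) has the right spectrum; in the disconnected case it shows $G = G_1 \cup mK_1$, eliminates $m \geq 2$ by comparing the index $2n-2$ with the maximum degree of $G_1$, and finally cites as well known that the cocktail party graph $CP(n)$ is DS in order to identify $G_1$. You instead give a single uniform argument: Smith's theorem (exactly one positive eigenvalue $\Leftrightarrow$ complete multipartite plus isolated vertices) pins down the global structure at once, the inertia of a complete multipartite graph fixes the number of parts $t = n$, and the rank-one condition on $B + 2I$ forces every part to have size $2$. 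Your route is more elementary and self-contained: it avoids both van Dam's classification and the DS-ness of $CP(n)$ (which the paper leaves uncited and which your quotient-matrix argument effectively reproves from scratch), and it needs no connected/disconnected case split. The paper's proof is shorter if one accepts its two imported results as black boxes, but those citations carry most of the weight. A further merit of your argument is that it isolates exactly where the hypothesis $n \geq 3$ enters (the existence of a third index in the proportionality argument), which matches cleanly the known failure for $n=2$, where $K_{1,4}$ is cospectral with $\overline{F_2} = C_4 \cup K_1$.
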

\begin{proof}
It is easy to check that the complement of friendship graph $F_n$ is $CP(n)\cup K_1$, where $CP(n)$ is cocktail party graph. The spectrum of $\overline{F_n}$ is as follows:
$$Spec(\overline{F_n})=\left\lbrace[-2]^{n-1},[0]^{n+1},[2n-2]^1\right\rbrace .$$
Let $G$ be cospectral with $\overline{F_n}$. Firstly, we prove that $G$ can not be connected. Suppose $G$ is connected. Because for $n\geq 3$, there are $\frac{1}{6}((2n-2)^3-8(n-1))$ triangles in $G$, $G$ is not bipartite and specially is not complete bipartite graph. Also in graph $G$, $(2n+1)(2n-2)$ is not eqal to the $(2n-2)^2+4(n-1)$, so by Corollary 3.2.2 of \cite{a11}, $G$ is not regular and specially is not strongly regular graph. Now by Theorem 7 of \cite{a13}, $G$ must be one of these graphs; cone over Petersen graph, the graph derived from the complement of the Fano plane, the cone over the Shrikhande graph, the cone over the lattice graph $L_2(4)$, the graph on the points and planes of $AG(3,2)$, the graph related to the lattice graph $L_2(5)$, the cones over the Chang graphs, the cone over the triangular graph $T(8)$, and the graph obtained by switching in $T(9)$ with respect to an 8-clique. But these graphs have spectrum $\left\lbrace[-2]^5,[1]^5,[5]^1\right\rbrace$, $\left\lbrace[-2]^7,[1]^6,[8]^1\right\rbrace$, $\left\lbrace[-2]^{10},[2]^6,[8]^1\right\rbrace$, $\left\lbrace[-2]^{10},[2]^6,[8]^1\right\rbrace$, $\left\lbrace[-2]^{14},[2]^7,[14]^1\right\rbrace$, $\left\lbrace[-2]^{16},[3]^7,[11]^1\right\rbrace$, $\left\lbrace[-2]^{21},[4]^7,[14]^1\right\rbrace$, $\left\lbrace[-2]^{21},[4]^7,[14]^1\right\rbrace$, and $\left\lbrace[-2]^{28},[5]^7,[21]^1\right\rbrace$, respectively. But, because of the spectrum of $G$, this is contradiction and $G$ is not connected.

Now, suppose $G$ is disconnected. By similar discussion in the proof of Theorem \ref{Theorem1}, $G$ must be the disjoint union of a connected graph $G_1$ and some isolated vertices, so $G=G_1\cup mK_1$, for some $m>0 .$ If $m>2$, then $G_1$ has $2n+1-m$ vertices and $\bigtriangleup(G_1)\leq 2n-3$, but by Theorem 3.2.1 of \cite{a11}, this is contradiction, since the index of $G_1$ is $2n-2$. If $m=2$, then $G_1$ has $2n-1$ vertices and $\bigtriangleup(G_1)\leq 2n-2$. But, the index of $G_1$ is $2n-2$ and again by Theorem 3.2.1 of \cite{a11} we must have $\bigtriangleup(G_1)=2n-2$. In this case, $G_1$ is complete graph with $2n-1$ vertices, that is contradiction. So, by the first part of proof, we have $m=1$ and $G=G_1\cup k_1$. Therefore, the spectrum of $G_1$ is $\left\lbrace[-2]^{n-1},[0]^n,[2n-2]^1\right\rbrace .$ It is well known that $CP(n)$ is $DS$ and $Spec(G_1)=Spec(CP(n)) .$ Therefore $G_1$ is isomorph to $CP(n)$ and it shows that $G=CP(n)\cup K_1=\overline{F_n}$. So, we obtain that $\overline{F_n}$ is $DS$.
\end{proof}




\bigskip
\bigskip

{\footnotesize \pn{\bf Alireza Abdollahi}\; \\ {Department of Mathematics}, {University of Isfahan,} {Isfahan 81746-73441, Iran}\\
and and School of Mathematics, Institute for Research in Fundamental Sciences (IPM), P.O.Box: 19395-5746, Tehran, Iran.\\
{\tt Email: a.abdollahi@math.ui.ac.ir}\\

{\footnotesize \pn{\bf Shahrooz Janbaz}\; \\ {Department of
Mathematics}, {University of Isfahan,} {Isfahan 81746-73441, Iran}\\
{\tt Email: shahrooz.janbaz@sci.ui.ac.ir}\\

{\footnotesize \pn{\bf Mohammad Reza Oboudi}\; \\ {Department of
Mathematics}, {University of Isfahan,} {Isfahan 81746-73441, Iran}\\
{\tt Email: mr.oboudi@sci.ui.ac.ir}\\
\end{document}